\setlist[enumerate]{format=\normalfont}
\newcommand{\tocspace}{0.1ex}
\let\oldtocsection=\tocsection
\let\oldtocsubsection=\tocsubsection
\let\oldtocsubsubsection=\tocsubsubsection
\renewcommand{\tocsection}[3]{\hspace{0em}\oldtocsection{#1}{#2}{#3}\vspace{\tocspace}}
\renewcommand{\tocsubsection}[3]{ \hspace{1em} \oldtocsubsection{#1}{\small{#2}}{\small{#3}}\vspace{\tocspace} }
\renewcommand{\tocsubsubsection}[3]{\hspace{2em}\oldtocsubsubsection{#1}{\small{#2}}{\small{#3}}}
\newcommand{\marginparstretch}{0.6}
\let\oldmarginpar\marginpar
\renewcommand\marginpar[1]{\-\oldmarginpar[\framebox{\setstretch{\marginparstretch}\begin{minipage}{\marginparwidth}{\raggedleft\tiny #1}\end{minipage}}]{\framebox{\setstretch{\marginparstretch}\begin{minipage}{\marginparwidth}{\raggedright\tiny #1}\end{minipage}}}}
\tikzset{
        cvertex/.style={circle,draw=black,inner sep=1pt,outer sep=3pt},
        vertex/.style={circle,fill=black,inner sep=1pt,outer sep=3pt},
        star/.style={circle,fill=yellow,inner sep=0.75pt,outer sep=0.75pt},
        tvertex/.style={inner sep=1pt,font=\scriptsize},
        gap/.style={inner sep=0.5pt,fill=white}}
\tikzstyle{mybox} = [draw=black, fill=blue!10, very thick,
\tikzstyle{boxtitle} =[fill=blue!50, text=white,rectangle,rounded corners]
\newtheorem{thm}{Theorem}[section]
\newtheorem{prop}[thm]{Proposition}
\newtheorem{lemma}[thm]{Lemma}
\newtheorem{defin}[thm]{Definition}
\newtheorem{cor}[thm]{Corollary}
\newtheorem{assumption}[thm]{Assumption}
\theoremstyle{definition} 
\newtheorem{example}[thm]{Example}
\newtheorem{setup}[thm]{Setup}
\newtheorem{remark}[thm]{Remark}
\newtheorem{notation}[thm]{Notation}
\numberwithin{equation}{section}
\newcounter{tempenum}
\newcommand\citetype[1]{}
\newcommand{\m}{\mathfrak{m}}
\newcommand{\n}{\mathfrak{n}}
\newcommand{\p}{\mathfrak{p}}
\newcommand{\q}{\mathfrak{q}}
\def\op{\mathop{\rm op}\nolimits}
\def\CM{\mathop{\rm CM}\nolimits}
\def\MCM{\mathop{\rm MCM}\nolimits}
\def\uCM{\mathop{\underline{\rm CM}}\nolimits}
\def\depth{\mathop{\rm depth}\nolimits}
\def\hgt{\mathop{\rm ht}\nolimits}
\def\mod{\mathop{\rm mod}\nolimits}
\def\coh{\mathop{\rm coh}\nolimits}
\def\Qcoh{\mathop{\rm Qcoh}\nolimits}
\def\Mod{\mathop{\rm Mod}\nolimits}
\def\pd{\mathop{\rm pd}\nolimits}
\def\id{\mathop{\rm inj.dim}\nolimits}
\def\uHom{\mathop{\underline{\rm Hom}}\nolimits}
\def\uEnd{\mathop{\underline{\rm End}}\nolimits}
\def\Hom{\mathop{\rm Hom}\nolimits}
\def\RHom{\mathop{\rm {\bf R}Hom}\nolimits}
\def\End{\mathop{\rm End}\nolimits}
\def\Ext{\mathop{\rm Ext}\nolimits}
\def\add{\mathop{\rm add}\nolimits}
\def\Ker{\mathop{\rm Ker}\nolimits}
\def\ker{\mathop{\rm ker}\nolimits}
\def\Im{\mathop{\rm Im}\nolimits}
\def\Sing{\mathop{\rm Sing}\nolimits}
\def\Supp{\mathop{\rm Supp}\nolimits}
\def\Ass{\mathop{\rm Ass}\nolimits}
\def\Spec{\mathop{\rm Spec}\nolimits}
\def\Max{\mathop{\rm Max}\nolimits}
\def\Perf{\mathop{\rm{Perf}}\nolimits}
\def\D{\mathop{\rm{D}^{}}\nolimits}
\def\Db{\mathop{\rm{D}^b}\nolimits}
\def\Id{\mathop{\rm{Id}}\nolimits}
\newcommand{\K}{\mathop{{}_{}\mathds{k}}\nolimits}
\newcommand{\con}{\mathrm{con}}
\newcommand{\CA}{\mathrm{A}_{\con}}
\newcommand{\AB}{\mathrm{A}}
\def\RA{\mathop{\rm RA}\nolimits}
\def\redu{\mathop{\rm red}\nolimits}
\def\uotimes{\mathop{\underline{\otimes}}\nolimits}
\def\Rf{{\rm\bf R}f}
\def\RHom{{\rm{\bf R}Hom}}
\def\sEnd{\mathcal{E}nd}
\def\RsHom{{\bf R}\mathcal{H}om}
\newcommand\RDerived[1]{{\rm\bf R}{#1}}
\newcommand\Rfi[1]{{\rm\bf R}^{#1}f}
\newcommand\Lotimes[1]{\mathop{\otimes^{\bf L}_{#1}}\nolimits}
\newcommand\art{\mathsf{Art}}
\newcommand\proart{\mathsf{pArt}}
\newcommand\Sets{\mathsf{Sets}}
\newcommand\Def{\mathcal{D}ef}
\newcommand\DG{\mathsf{DG}}
\newcommand{\cA}{\mathcal{A}}
\newcommand{\cB}{\mathcal{B}}
\newcommand{\cC}{\mathcal{C}}
\newcommand{\cD}{\mathcal{D}}
\newcommand{\cE}{\mathcal{E}}
\newcommand{\cF}{\mathcal{F}}
\newcommand{\cI}{\mathcal{I}}
\newcommand{\cK}{\mathcal{K}}
\newcommand{\cM}{\mathcal{M}}
\newcommand{\cO}{\mathcal{O}}
\newcommand{\cP}{\mathcal{P}}
\newcommand{\cQ}{\mathcal{Q}}
\newcommand{\cS}{\mathcal{S}}
\newcommand{\cT}{\mathcal{T}}
\newcommand{\cV}{\mathcal{V}}
\newcommand{\Per}{{}^{0}\mathcal{P}er}
\newcommand{\PerOne}{{}^{-1}\mathcal{P}er}
\newcommand\CAz{\mathrm{A}_{\con,z}}
\newcommand\locGen{\cV}
\newcommand\vlocGen{\cP}
\newcommand\locGenZar{\cV|_U}
\newcommand\sTiltAlg{\cA}
\newcommand\vTiltAlg{\cB}
\newcommand\sIdeal{\cI}
\newcommand\sDefAlg{\cD}
\newcommand\nonIso{Z}
\newcommand\twistGen{{\sf Twist}}
\newcommand\twist{\twistGen_X}
\newcommand\twistOb{\cE}
\newcommand\funcompose{\circ}
\newcommand\placeholder{-}
\newlength\tempWidth
\begin{document}
\title{\textsc{Noncommutative Enhancements of Contractions}}
\author{Will Donovan}
\address{Will Donovan, Yau Mathematical Sciences Center, Tsinghua University, Haidian District, Beijing 100084, China.}
\email{donovan@mail.tsinghua.edu.cn}
\author{Michael Wemyss}
\address{Michael Wemyss, The Mathematics and Statistics Building, University of Glasgow, University Place, Glasgow, G12 8QQ, UK.}
\email{michael.wemyss@glasgow.ac.uk}
\begin{abstract}

Given a contraction of a variety $X$ to a base $Y$, we enhance the locus in~$Y$ over which the contraction is not an isomorphism with a certain sheaf of noncommutative rings $\sDefAlg$, under mild assumptions which hold in the case of (1)~crepant partial resolutions admitting a tilting bundle with trivial summand, and (2)~all contractions with fibre dimension at most one.  In all cases, this produces a  global invariant. In the crepant setting, we then apply this to study derived autoequivalences of~$X$.  We work generally, dropping many of the usual restrictions, and so both extend and unify existing approaches. In full generality we construct a new endofunctor of the derived category of~$X$ by twisting over~$\sDefAlg$, and then, under appropriate restrictions on singularities, give conditions for when it is an autoequivalence. We show that these conditions hold automatically when the non-isomorphism locus in $Y$ has codimension~$3$ or~more, which covers determinantal flops, and we also control the conditions when the non-isomorphism locus has codimension~$2$, which covers $3$-fold divisor-to-curve contractions.

\end{abstract}
\subjclass[2010]{Primary 14F05; Secondary 14D15, 14E30, 16E45, 16S38, 18E30}
% 14F05  	Sheaves, derived categories of sheaves and related constructions
% 14D15  	Formal methods; deformations
% 14E30  	Minimal model program (Mori theory, extremal rays)
% 16E45 Differential graded algebras and applications
% 16S38  	Rings arising from non-commutative algebraic geometry
% 18E30  	Derived categories, triangulated categories
\thanks{The first author was supported by World Premier International Research Center Initiative (WPI), MEXT, Japan, and JSPS KAKENHI Grant Number~JP16K17561. The second author was supported by EPSRC grant~EP/K021400/1.}
\maketitle
\parindent 20pt
\parskip 0pt

\tableofcontents

\section{Introduction}
It has become increasingly clear that various aspects of birational geometry should be enhanced into a mildly noncommutative setting. One example of this, namely associating noncommutative algebras to certain contractions, has recently yielded many new results, including: algorithmic ways to relate minimal models based on cluster theory \cite{HomMMP}, new invariants for flips and flops \cite{DW1} linked to Gopakumar--Vafa invariants \cite{Toda GV}, the braiding of flop functors \cite{DW3}, noncommutative versions of curve counting \cite{Toda virtual}, a~full conjectural analytic classification of $3$-fold flops \cite{DW1, HT}, and, in addition, the first new examples of $3$-fold flops since 1983 \cite{BW, Karmazyn}.

However, a major technical and philosophical drawback of many of these constructions is that they are local in nature, or apply only to contractions of curves. In this paper, we work in a much more general setting. We take a contraction $f \colon X \to Y$ satisfying mild characteristic-free assumptions, and enhance the non-isomorphism locus $\nonIso$ in~$Y$ with  a certain sheaf of noncommutative rings.  Amongst other things, we use this sheaf to give a new class of derived autoequivalences.

In this paper we accomplish the following.
\begin{itemize}
\item\label{Q1 intro}  In an axiomatic framework sketched in \S\ref{New inv section}, we construct a noncommutative ringed space $(\nonIso,\sDefAlg)$ where $\nonIso$ is the non-isomorphism locus in~$Y$. In $3$-folds this applies to flopping, flipping, and divisor-to-curve contractions, but also works in arbitrary dimension, and for contractions with higher-dimensional fibres.

\item\label{Q2 intro} In a crepant setting given in \S\ref{section construct twist}, using the sheaf of noncommutative rings  $\sDefAlg$ above, we construct an endofunctor $\twist$ of $\D(\Qcoh X)$  associated to the contraction $f$, and establish criteria for it to be an autoequivalence.
\end{itemize}

The main benefit of this approach is its generality: we show in~\ref{assumptions do hold}\eqref{assumptions do hold 1} that the axiomatic framework applies when $f$ is a crepant resolution of a Gorenstein $d$-fold $Y$, with mild restrictions which hold in many known settings, including those of Haiman~\cite{Haiman}, Bezrukavnikov--Kaledin~\cite{BK}, Procesi bundles~\cite{Losev}, Toda--Uehara~\cite{Toda Uehara}, Springer resolutions of determinantal varieties~\cite{BLV}, and all known $3$-fold crepant resolutions including derived McKay correspondence~\cite{BKR}.  We note in~\ref{Craw--Ishii} that it covers all $3$-fold projective crepant resolutions, if the Craw--Ishii conjecture holds. Further we show in~\ref{assumptions do hold}\eqref{assumptions do hold 2} that our framework applies to all contractions with fibres of dimension at most one.

\medskip

As the construction of $\sDefAlg$ is the most subtle part of the paper, we first briefly outline the local-to-global problems that arise in one specific example.    Consider the following crepant divisor-to-curve contraction, given explicitly in \ref{D4 to A1 ex}.  $Y$ is singular along a one-dimensional locus $\nonIso$, and above every point in $\nonIso$ is an irreducible curve.
\[
\begin{array}{c}
\begin{tikzpicture}
%%%
\node at (0,0) {\begin{tikzpicture}[scale=0.6]
\coordinate (T) at (1.9,2);
\coordinate (B) at (2.1,1);
\draw[red,line width=1pt] (T) to [bend left=25] (B);
\foreach \y in {0.2,0.4,...,1}{ 
\draw[line width=0.5pt,red] ($(T)+(\y,0)$) to [bend left=25] ($(B)+(\y,0)$);
\draw[line width=0.5pt,red] ($(T)+(-\y,0)$) to [bend left=25] ($(B)+(-\y,0)$);;}
\draw[color=blue!60!black,rounded corners=6pt,line width=0.75pt] (0.5,0) -- (1.5,0.3)-- (3.6,0) -- (4.3,1.5)-- (4,3.2) -- (2.5,2.7) -- (0.2,3) -- (-0.2,2)-- cycle;
\end{tikzpicture}};
%%%
\node at (0,-2) {\begin{tikzpicture}[scale=0.6]
\draw [red] (1.1,0.75) -- (3.1,0.75);
\draw[color=blue!60!black,rounded corners=6pt,line width=0.75pt] (0.5,0) -- (1.5,0.15)-- (3.6,0) -- (4.3,0.75)-- (4,1.6) -- (2.5,1.35) -- (0.2,1.5) -- (-0.2,1)-- cycle;
%The two points
\filldraw[red] (2.1,0.75) circle (1.5pt);
\filldraw[red] (2.7,0.75) circle (1.5pt);
%The labels on points
\node at (2.1,0.43) {$\scriptstyle 0$};
\node at (2.7,0.4) {$\scriptstyle z$};
%%Label 
\node at (0.75,0.75) {${\scriptstyle \nonIso}$};
\end{tikzpicture}};
%%%
\draw[->, color=blue!60!black] (0,-1) -- (0,-1.5);
%%%LABELS
\node at (-1.75,0) {$X$};
\node at (-1.75,-2) {$Y$};
\node at (-0.25,-1.25) {$f$};
%%arrows into picture
\draw[->,line width=0.75,black!50,bend right=15] (2.5,-1.5) to (0.07,-2);
\draw[->,line width=0.75,black!50,bend left=15] (2.5, -2.25) to (0.42,-2.07);
%%NC def labels
\node at (3.1,-1.5) {$\frac{\K\langle\!\langle x,y\rangle\!\rangle}{x^2,y^2}$};
\node at (2.9,-2.25) {$\scriptstyle \K[\![ x]\!]$};
\end{tikzpicture}
\end{array}
\]

At every closed point $z\in\nonIso$, \cite{DW2} constructs a noncommutative deformation algebra $\CAz$, which induces a universal sheaf $\twistOb_z$ on the formal fibre above $z$. In the above example, at every closed point of $\nonIso$ away from the origin, $\CAz$ is isomorphic to $\K[\![ x]\!]$, and at the origin it has the form shown above.  The question is whether there exists a global sheaf of algebras $\sDefAlg$ on $\nonIso$ which specialises, complete locally, to the algebras $\CAz$.  Similarly one can ask whether the universal sheaves $\twistOb_z$, as $z$ varies over $\nonIso$, can be glued into a single coherent sheaf $\twistOb$ on $X$.

The key insight in this paper is that this can be done, but not on the nose; we construct a global $\sDefAlg$ that recovers the algebras $\CAz$ up morita equivalence, and in~\eqref{equation universal sheaf} a global $\twistOb$ that recovers the $\twistOb_z$ up to additive equivalence.  In the above example this means that $\sDefAlg$ completed at a point $z$ away from the origin is the ring of $2\times 2$ matrices over $\K[\![ x]\!]$, instead of $\K[\![ x]\!]$ itself.  This is rather harmless, since to extract local invariants, we pass to the basic algebra~\cite[\S 3]{DW1}, and for construction of derived autoequivalences, passing through morita equivalences is a mild and necessary procedure.

\subsection{Construction of Invariant $(Z,\sDefAlg)$ and Global $\twistOb$}\label{New inv section}
We sketch this briefly, before describing our main results and applications. The construction does not use deformation theory, or any restrictions on singularities or fibre dimension.  Full details are given in~\S\ref{section global thickening}.

By a contraction, we mean a projective birational map $f\colon X\to Y$ between $d$-dimensional normal varieties over an algebraically closed field $\K$,  satisfying $\Rf_*\cO_X=\cO_{Y}$, where we assume that $Y$ is quasi-projective. Write $\nonIso$ for the locus of points of $Y$ above which $f$ is not an isomorphism.  

We then assume that there is a vector bundle $\vlocGen$ on $X$ containing $\cO_X$ as  a summand, such that
\begin{enumerate}
\item The natural map $f_*\sEnd_X(\vlocGen)\to \sEnd_Y(f_*\vlocGen)$ is an isomorphism.
\item The functor
\begin{equation}
\RDerived{f}_* \RsHom_X(\vlocGen,\placeholder) \colon \Db(\coh X)\to\Db(\mod \vTiltAlg)\label{Db equiv intro}
\end{equation}
is an equivalence, where $\vTiltAlg:=f_*\sEnd_X(\vlocGen)$ is considered as a sheaf of $\cO_Y$-algebras.
\end{enumerate}

Given this setup, we write $\cQ:=f_*\vlocGen$, so that $\vTiltAlg\cong\sEnd_Y(\cQ)$.  We then define a subsheaf $\sIdeal$ of $\vTiltAlg$ consisting of local sections that at each stalk factor through a finitely generated projective module, in \ref{defin sheaf of ideals}. We show in \ref{is sheaf of ideals} that $\sIdeal$ is naturally a subsheaf of two-sided ideals of $\vTiltAlg$.  

Thus we may also view $\sIdeal$ as a subsheaf of $\vTiltAlg^{\op}$, and define $\sDefAlg:=\vTiltAlg^{\op}/\sIdeal$, which we call the \emph{sheafy contraction algebra}.  For most purposes, taking the opposite ring structure can be ignored.  In \ref{3 ways to def} we give two alternative descriptions of $\sIdeal$, which are important since they allow us to control local sections of $\sDefAlg$, to such an extent that we can prove the following.

\begin{thm}[=\ref{global cont thm}, Global Contraction Theorem]
$\Supp_Y\sDefAlg=\nonIso$.
\end{thm}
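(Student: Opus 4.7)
The plan is to establish the two inclusions $\Supp_Y\sDefAlg\subseteq\nonIso$ and $\nonIso\subseteq\Supp_Y\sDefAlg$ separately, each reduced to a statement about stalks of $\sIdeal$ inside $\vTiltAlg$.

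For the easier containment $\Supp_Y\sDefAlg\subseteq\nonIso$, I would take $y\in Y\setminus\nonIso$, so that $f$ restricts to an isomorphism over some open neighbourhood $V$ of $y$. Since $\vlocGen$ is a vector bundle on $X$ and $f|_{f^{-1}(V)}$ is an isomorphism, $\cQ|_V=(f_*\vlocGen)|_V$ is locally free, so $\cQ_y$ is a finitely generated free $\cO_{Y,y}$-module. Any endomorphism $s\in\vTiltAlg_y=\End_{\cO_{Y,y}}(\cQ_y)$ then factors as $s=\id_{\cQ_y}\circ s$ through the finitely generated projective module $\cQ_y$ itself, so $s\in\sIdeal_y$. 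Hence $\sIdeal_y=\vTiltAlg_y$ and $\sDefAlg_y=0$, as required.

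For the reverse containment $\nonIso\subseteq\Supp_Y\sDefAlg$, I would take $y\in\nonIso$ and show $\id_{\cQ_y}\notin\sIdeal_y$. If the identity did factor through a finitely generated projective $P$ over $\cO_{Y,y}$, then $\cQ_y$ would be a direct summand of $P$, hence projective, and, since $\cO_{Y,y}$ is Noetherian local, free of some rank $n$. So the task reduces to proving that $\cQ_y$ is \emph{not} free at any $y\in\nonIso$. One approach is: if $\cQ$ were free of rank $n$ in a neighbourhood $V$ of $y$, then $\vTiltAlg|_V\cong M_n(\cO_V)$ is Morita equivalent to $\cO_V$, and composing with the derived equivalence~\eqref{Db equiv intro} restricted to $V$ yields an equivalence $\Db(\coh f^{-1}(V))\simeq\Db(\coh V)$ compatible with $f$. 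Since $f$ is a projective birational morphism between normal $d$-dimensional varieties with $\RDerived{f}_*\cO_X=\cO_Y$, such an equivalence forces $f|_{f^{-1}(V)}$ to be an isomorphism (for instance, by matching skyscrapers at points of $V$ with their images in $\Db(\coh f^{-1}(V))$, which must then be skyscrapers concentrated at a single point of the fibre), contradicting $y\in\nonIso$.

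The main obstacle is this last implication: $\cQ_y$ free $\Rightarrow$ $f$ is an isomorphism near $y$. The axiomatic proof is more likely to bypass the full Morita/derived-equivalence argument sketched above by instead exploiting one of the two alternative descriptions of $\sIdeal$ announced in~\ref{3 ways to def}. For example, if one of those descriptions identifies $\sIdeal$ with a trace ideal of certain projective $\vTiltAlg$-modules, or with sections of $\vTiltAlg$ that descend to the open locus $Y\setminus\nonIso$, then $\id_{\cQ_y}\notin\sIdeal_y$ at $y\in\nonIso$ reduces to a more local statement about the positive-dimensional fibre $f^{-1}(y)$, avoiding the need to invoke the full global equivalence~\eqref{Db equiv intro}.
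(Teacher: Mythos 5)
Your first inclusion is fine, and your reduction of the second inclusion to the statement ``$\cQ_y$ is not free for $y\in\nonIso$'' (equivalently, the identity germ does not lie in $\sIdeal_y$) is also correct. The genuine gap is exactly where you flag it: the implication ``$\cQ$ free near $y$ $\Rightarrow$ $f$ is an isomorphism over a neighbourhood of $y$'' is the entire content of the hard direction, and the parenthetical skyscraper argument does not prove it. The equivalence you get by composing \eqref{Db equiv intro} over $V$ with the Morita equivalence for $M_n(\cO_V)$ is $\RDerived{f}_*\RsHom_X(\vlocGen,\placeholder)$ followed by multiplication by an idempotent; nothing in your sketch shows that the preimages of skyscrapers $\K(y)$ are skyscrapers supported at single points of the fibres. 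To make such an argument run one must first use the ring isomorphism $\End(\vlocGen|_{f^{-1}(V)})\cong M_n(\cO(V))$, together with the fact that $\cO$ is a summand of $\vlocGen$, to identify $\vlocGen|_{f^{-1}(V)}$ with (a line-bundle twist of) $\cO^{\oplus n}$ --- an idempotent/rank argument, not automatic over a non-local base --- so that the tilting functor becomes essentially $\Rf_*$; only then can one derive a contradiction from a positive-dimensional fibre, e.g.\ because two distinct closed points of one fibre have isomorphic images under $\Rf_*$, or because some object supported on the fibre is killed by $\Rf_*$, while an equivalence reflects isomorphisms and zero objects. None of this is supplied, and it is the nontrivial part.

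Your guess at how the paper avoids this is also off the mark: the paper does not use \ref{3 ways to def} or a trace-ideal description here. Its proof of \ref{global cont thm} is a two-line affine-local reduction: by \ref{affine is nice} one has $\sDefAlg(V)\cong(\Lambda_V)_{\con}$ for affine opens $V$, and then it invokes the local Contraction Theorem \cite[4.7]{DW2}, noting that the proof there does not require the one-dimensional-fibre hypothesis. In other words, the implication you isolate as the main obstacle is precisely what is outsourced to \cite{DW2}; since your proposal neither proves it nor cites it, it is not a complete proof, although the reduction you perform is the right one and is essentially how the cited local result is applied.
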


It follows that $\sDefAlg$ is naturally a sheaf of algebras on the locus $\nonIso$, and thus we can view the ringed space $(\nonIso,\sDefAlg)$ as a noncommutative enhancement of $\nonIso$.

\begin{remark}When the locus $\nonIso$ can be realized as a moduli spaces $\cM$ of stable sheaves, Toda \cite{Toda thickening} constructs, under restrictions on $\operatorname{dim} \nonIso$, another noncommutative enhancement of $\nonIso$ which is different to ours, since it is unusual for the stalks of $\sDefAlg$ to be local rings; see also \ref{motivate hyper assump} and \ref{not Toda}.\end{remark}

The noncommutative sheaf of rings $\sDefAlg$ constructed is naturally a sheaf on the base $Y$ of the contraction $f\colon X\to Y$.  We next lift this to give an object on~$X$, by observing that by construction $\sDefAlg$ is a factor of~$\vTiltAlg^{\op}$, so it also carries the natural structure of a $\vTiltAlg$-bimodule. In particular, we can view $\sDefAlg$ as an object of~$\Db(\mod\vTiltAlg^{\op})$, and hence, across a dual version of \eqref{Db equiv intro}, it gives an object
\begin{equation}\label{equation universal sheaf}
\twistOb:=f^{-1} \sDefAlg \Lotimes{f^{-1} \vTiltAlg^{\op}} \vlocGen^*
\end{equation}
of $\Db(\coh X)$. In general $\twistOb$ need not be a sheaf. In our most general setup, we prove the following.

\begin{prop}[=\ref{derived chase}]
$\Rf_*\twistOb=0$. In particular, $\Supp_X\twistOb$ lies in the exceptional locus.
\end{prop}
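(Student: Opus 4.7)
The plan is to compute $\Rf_*\twistOb$ by transporting across the ``dual'' tilting equivalence used to define $\twistOb$, and then exploiting the trivial summand of $\vlocGen$ together with the defining property of $\sIdeal$. By construction~\eqref{equation universal sheaf}, $\twistOb$ is the image of $\sDefAlg\in\Db(\mod\vTiltAlg^{\op})$ under the inverse of the equivalence $\Db(\coh X)\xrightarrow{\sim}\Db(\mod\vTiltAlg^{\op})$, $F\mapsto\Rf_*\RsHom_X(\vlocGen^*,F)$; since $\vlocGen$ is locally free this rewrites as $\Rf_*(\vlocGen\otimes F)$, and in particular the image of $\twistOb$ equals $\sDefAlg$ itself.

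The key observation is that $\Rf_*$ is already visible inside this functor. Writing $\vlocGen=\cO_X\oplus\vlocGen'$ and letting $e\in\vTiltAlg$ denote the idempotent projecting onto the $\cO_X$ summand (equivalently, the projection $\cQ\twoheadrightarrow\cO_Y\hookrightarrow\cQ$), tensoring decomposes to give
\[
\Rf_*(\vlocGen\otimes F)\;=\;\Rf_*F\;\oplus\;\Rf_*(\vlocGen'\otimes F),
\]
so $\Rf_*F$ is the direct summand cut out by the action of $e$. Applied with $F=\twistOb$, this identifies $\Rf_*\twistOb$ with the $e$-part of $\sDefAlg$.

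To finish, I would argue this $e$-part is zero. The idempotent $e$ factors through $\cO_Y$, which is free and hence projective over itself, so the definition of $\sIdeal$ in~\ref{defin sheaf of ideals} forces $e\in\sIdeal$. Because $\sIdeal$ is a two-sided ideal by~\ref{is sheaf of ideals}, the class of $e$ vanishes in $\sDefAlg=\vTiltAlg^{\op}/\sIdeal$, and multiplication by $e$ annihilates $\sDefAlg$ on either side; hence $\Rf_*\twistOb=0$. The support statement is then immediate: $f$ restricts to an isomorphism on $X\setminus f^{-1}(\nonIso)$, so restricting $\Rf_*\twistOb=0$ to $Y\setminus\nonIso$ and transporting back across this isomorphism gives $\twistOb|_{X\setminus f^{-1}(\nonIso)}=0$, i.e.\ $\Supp_X\twistOb\subseteq f^{-1}(\nonIso)$.

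I expect the main obstacle to be the second step, namely formalising the identification of $\Rf_*$ with the action of $e$ at the derived level and sorting out the one-sided conventions for the $\vTiltAlg^{\op}$-module structures on $\sDefAlg$ and $\vlocGen^*$. Once these are pinned down, the remaining argument is bookkeeping, with the crucial input being the factorisation of $e$ through the projective $\cO_Y$.
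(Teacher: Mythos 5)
Your proposal is correct and rests on exactly the mechanism the paper uses: under the tilting equivalence, $\Rf_*$ is realised by the idempotent $e$ coming from the trivial summand $\cO_X$ of $\vlocGen$, and this idempotent kills $\sDefAlg$ because it lies in $\sIdeal$ (locally, $(\Lambda_V)_{\con}e=0$). The paper packages this affine-locally, identifying $\twistOb|_U$ with $(\Lambda_V)_{\con}$ via \ref{affine is nice} and $\RDerived f'_*$ with $(\placeholder)e$, then globalising by flat base change, whereas you argue directly at the global sheaf level via the summand decomposition of $\Rf_*(\vlocGen\otimes\placeholder)$; the content is the same, and the one-sided bookkeeping you flag is indeed the only point needing care.
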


The case of one-dimensional fibres  is particularly pleasant, and a typical example is sketched below.
\[
\begin{array}{c}
\begin{tikzpicture}
%%%
\node at (0,0) {\begin{tikzpicture}[scale=0.6]
\coordinate (T) at (1.9,2);
\coordinate (TM) at (2.12-0.05,1.5-0.1);
\coordinate (BM) at (2.12-0.09,1.5+0.1);
\coordinate (B) at (2.1,1);
\draw[red,line width=1pt] (T) to [bend left=25] (B);
\foreach \y in {0.2,0.4,...,1}{ 
\draw[line width=0.5pt,red] ($(T)+(\y,0)+(0.02,0)$) to [bend left=25] ($(TM)+(\y,0)+(0.02,0)$);
\draw[line width=0.5pt,red] ($(BM)+(\y,0)+(0.02,0)$) to [bend left=25] ($(B)+(\y,0)+(0.02,0)$);
\draw[line width=0.5pt,red] ($(T)+(-\y,0)+(-0.02,0)$) to [bend left=25] ($(TM)+(-\y,0)+(-0.02,0)$);
\draw[line width=0.5pt,red] ($(BM)+(-\y,0)+(-0.02,0)$) to [bend left=25] ($(B)+(-\y,0)+(-0.02,0)$);;}
\draw[color=blue!60!black,rounded corners=5pt,line width=0.75pt] (0.5,0) -- (1.5,0.3)-- (3.6,0) -- (4.3,1.5)-- (4,3.2) -- (2.5,2.7) -- (0.2,3) -- (-0.2,2)-- cycle;
\node (twistOb label) at (5.75,1.5) {$\Supp \twistOb$};
\draw[->] (twistOb label) -- (3.4,1.5);
\end{tikzpicture}};
%%%
\node at (0,-2) {\begin{tikzpicture}[scale=0.6]
\draw [red] (1.1,0.75) -- (3.1,0.75);
\draw[color=blue!60!black,rounded corners=5pt,line width=0.75pt] (0.5,0) -- (1.5,0.15)-- (3.6,0) -- (4.3,0.75)-- (4,1.6) -- (2.5,1.35) -- (0.2,1.5) -- (-0.2,1)-- cycle;
\node (sDefAlg label) at (5.75,0.75) {$\Supp \sDefAlg$};
\draw[->] (sDefAlg label) -- (3.4,0.75);
\end{tikzpicture}};
%%%
\draw[->, color=blue!60!black] (-0.75,-1) -- (-0.75,-1.5);
%%%LABELS
\node at (-2.5,0) {$X$};
\node at (-2.5,-2) {$Y$};
\end{tikzpicture}
\end{array}
\]
The following are our main results in this setting.  In particular, this globalises the local noncommutative deformation theory as studied in \cite{DW1, DW2, BB, Kawamata}.

\begin{thm}\label{main E results into}
Suppose that $f\colon X\to Y$ is a contraction where the fibres have dimension at most one. Then the following hold.
\begin{enumerate}
\item\textnormal{(=\ref{B gives def locally 2})} The completion of $\sDefAlg$ at a closed point $z\in\nonIso$ is morita equivalent to the algebra that prorepresents noncommutative deformations of the reduced fibre over~$z$.
\item\textnormal{(=\ref{sheaf in deg 0})} $\twistOb$ is a sheaf.
\item\textnormal{(=\ref{supp equal excp})} $\Supp_X\twistOb$ equals the exceptional locus of $f$.
\item\textnormal{(=\ref{recovers universal locally})} The restriction of $\twistOb$ to the formal fibre above a closed point $z\in\nonIso$ recovers the universal sheaf $\twistOb_z$ from noncommutative deformation theory, up to additive equivalence.
\end{enumerate}
\end{thm}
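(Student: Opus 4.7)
My plan is to attack all four parts simultaneously by passing to the formal fibre above each closed point $z\in\nonIso$ and bootstrapping from the local noncommutative deformation theory of~\cite{DW1,DW2,BB,Kawamata}, which is developed precisely in the one-dimensional-fibre setting. Since $f$ is projective and $\vlocGen$ is coherent, flat base change along $\cO_{Y,z}\to\widehat{\cO}_{Y,z}$ commutes with $f_*$ and with the formation of $\sHom$-sheaves, so both $\sDefAlg$ and $\twistOb$ can be analysed formal-locally above each $z$.

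For~(1), I would complete the defining sequence $0\to\sIdeal\to\vTiltAlg^{\op}\to\sDefAlg\to 0$ at $z$ and identify $\widehat{\sIdeal}_z$, using flatness of completion, as the ideal of local endomorphisms of the pulled-back bundle that factor through a finitely generated projective. Using the alternative descriptions in~\ref{3 ways to def}, this matches the ideal of maps factoring through the $\cO_X$-summand once one passes to the basic algebra; by~\cite{DW1,DW2} the resulting quotient is morita equivalent to the deformation algebra $\CAz$ that prorepresents noncommutative deformations of the reduced fibre. Parts~(2)--(4) then follow in parallel: the formal-local restriction of $\twistOb$ becomes $f^{-1}\widehat{\sDefAlg}_z \Lotimes{f^{-1}\widehat{\vTiltAlg}^{\op}_z} \widehat{\vlocGen}^*_z$, and by the universal property of local noncommutative deformations this is precisely the universal sheaf $\twistOb_z$, but expressed over the morita-equivalent ring $\widehat{\sDefAlg}_z$ rather than the basic $\CAz$---which is the precise content of ``up to additive equivalence'' in~(4). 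Since $\twistOb_z$ is a coherent sheaf supported exactly on the reduced fibre over $z$, we obtain simultaneously that $\twistOb$ is concentrated in degree zero, hence a sheaf, proving~(2), and that every point of the exceptional locus lies in $\Supp_X\twistOb$; the reverse containment for~(3) is the already established~\ref{derived chase}.

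The main obstacle is the compatibility claim inside~(1): showing that formal completion of $\sIdeal$ really produces the ``maps factoring through a projective'' ideal on the formal fibre, and that in the one-dimensional-fibre setting this coincides, up to morita equivalence, with the ideal cutting out $\CAz$. This requires careful analysis of the three equivalent descriptions of $\sIdeal$ from~\ref{3 ways to def} under completion, together with the specifically one-dimensional input that the projective summands of the completed bundle above a formal fibre come only from the $\cO_X$-summand. Once this morita comparison is locked in, parts~(2)--(4) reduce to direct translations of the known local statements of~\cite{DW1,DW2}.
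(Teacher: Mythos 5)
Your proposal is correct in outline, and for parts (1) and (4) it is essentially the paper's own argument: the paper proves (1) by combining the affine identification $\sDefAlg(V)\cong(\Lambda_V)_{\con}$ of \ref{affine is nice} with the morita statement of \cite{DW2}, and proves (4) by flat base change plus the decompositions \eqref{complete decomps}, exactly the completion-and-morita comparison you describe. Where you genuinely diverge is in (2) and (3): the paper never completes for these. It argues Zariski-locally that $\twistOb|_U$ is the image of the \emph{module} $(\Lambda_V)_{\con}$ under the inverse tilting equivalence, hence lies in the perverse heart $\Per(U,V)$, and then \ref{Per lemma track}\eqref{Per lemma track 2} together with $\RDerived f'_*(\twistOb|_U)=0$ from \ref{derived chase} forces concentration in degree zero; for (3) it uses surjections of $(\Lambda_V)_{\con}$ onto Van den Bergh's simples, transported to surjections onto $\cO_{C_i}(-1)$ inside $\Per(U,V)$ as in \ref{supp closed pts}. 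Your route instead proves (4) first and then deduces (2) and (3) on each formal fibre from the known local sheaf $\twistOb_z$, which works, but buys this at the cost of two points you should make explicit. First, ``by the universal property of local noncommutative deformations this is precisely the universal sheaf'' is not the right mechanism: $j_z^*\twistOb$ is in general \emph{not} isomorphic to $\twistOb_z$ (the multiplicities in \eqref{complete decomps} differ), and no universal property identifies the completed tensor product directly; what you actually need is that under the completed tilting equivalence $j_z^*\twistOb$ corresponds to $\widehat{\Lambda}_{\con}$, whose additive closure agrees with that of $\mathbb{F}\CA$, so that $j_z^*\twistOb$ is a summand of a finite sum of copies of the sheaf $\twistOb_z$ and is therefore itself a sheaf with the same support --- this is the comparison you flag as the ``main obstacle,'' and it is exactly the content of \ref{recovers universal locally}, not a formal consequence of prorepresentability. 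Second, the local-to-global step needs a sentence: since $\Supp_X\twistOb\subseteq f^{-1}(\nonIso)$ by \ref{derived chase}, any nonzero $H^i(\twistOb)$ has a closed point $x$ in its support lying over a closed $z\in\nonIso$, and faithful flatness of $\cO_{X,x}\to\cO_{\mathfrak{X}_z,x}$ then contradicts the formal-fibre vanishing; the same flatness argument is what transfers the support statement in (3) back to $X$, with the reverse inclusion and the reduction to closed points handled as in \ref{derived chase} and \ref{supp equal excp}. The trade-off is that your proof of (2) and (3) now depends on the deformation-theoretic input of \cite{DW1,DW2} (including that $\twistOb_z$ is a sheaf supported on the whole reduced fibre), whereas the paper's perverse-heart argument for these two parts is independent of deformation theory.
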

The fact that in the one-dimensional fibre setting $\twistOb$ is a sheaf  is our main motivation for considering $\vTiltAlg^{\op}$ instead of $\vTiltAlg$.

\subsection{Applications to Autoequivalences $\twist$}\label{section construct twist}
We next turn our attention to crepant contractions and autoequivalences.  One benefit of the construction of $\twistOb$ on $X$ is that, using the natural exact sequence
\[
0\to\sIdeal\to\vTiltAlg^{\op}\to \sDefAlg\to 0,
\]
we can construct in \S\ref{global twists section},  under our most general setup in \S\ref{New inv section}, a functor $\twist$, which sits in a functorial triangle
\[
f^{-1} \RDerived{f}_* \RsHom_X(\twistOb,a) \Lotimes{f^{-1} \sDefAlg} \twistOb \to a \to\twist (a)\to.
\]
For a precise description of the terms in these expressions, we refer the reader to \S\ref{global twists section}, but remark here that when $\nonIso$ is a point, $\twist$ reduces to a noncommutative twist over a contraction algebra, as studied in \cite{DW1, DW3}.  The existence of a global sheafy contraction algebra $\sDefAlg$ allows us to avoid delicate local-to-global gluing arguments.

In general, $\twist$ is not an equivalence, but we have the following criterion in terms of a Cohen--Macaulay property of $\sDefAlg$ on $Y$, under a restriction to hypersurface singularities. There are two main reasons for this restriction on singularities (although it is not used everywhere), outlined in \ref{motivate hyper assump} below: note however that it holds automatically in the two main applications in this paper, namely to Springer resolutions of determinantal varieties of $n\times n$ matrices in \ref{det var thm}, and to $3$-fold divisor-to-curve contractions in \ref{one curve thm intro}.

\begin{thm}[=\ref{main sph thm}]\label{main twist 1 intro}
Under the general assumptions of \ref{key assumptions}, assume that $f$ is crepant, and $\widehat{\cO}_{Y,z}$ are hypersurfaces for all closed points $z\in\nonIso$. Then the following are equivalent.
\begin{enumerate}
\item\label{main twist 1 intro 1} $\sDefAlg$ is a Cohen--Macaulay sheaf on $Y$, and $\twistOb$ is a perfect complex on $X$.
\item\label{main twist 1 intro 2} $\sDefAlg$ is relatively spherical (in the sense of \ref{def geom sph}) for all closed points $z\in Z$.
\end{enumerate}
If these conditions hold, and they are automatic provided that $\dim Z\leq \dim Y-3$, then the functor $\twist$ is an autoequivalence of $\Db(\coh X)$.
\end{thm}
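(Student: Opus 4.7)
The statement splits into three parts: the equivalence \eqref{main twist 1 intro 1}$\Leftrightarrow$\eqref{main twist 1 intro 2}, the automatic verification in codimension $\geq 3$, and the passage from sphericality to the autoequivalence conclusion.

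For \eqref{main twist 1 intro 1}$\Leftrightarrow$\eqref{main twist 1 intro 2}, my plan is to reduce to a formal-local statement at each closed point $z\in\nonIso$.  Both the Cohen--Macaulay condition on $\sDefAlg$ and the perfection of $\twistOb$ are Zariski-local, and are further preserved under completion $\cO_{Y,z}\to\widehat{\cO}_{Y,z}$ by faithful flatness.  After completion, the equivalence \eqref{Db equiv intro} restricts to a derived equivalence between the formal fibre over~$z$ and modules over the completed endomorphism algebra, while $\sDefAlg$ specialises to a morita-equivalent quotient controlled by $\sIdeal$.  The hypersurface hypothesis then enters decisively: maximal Cohen--Macaulay modules over a hypersurface are $2$-periodic via matrix factorizations, and this periodicity is precisely the structure that forces the spherical cotwist to be an equivalence, matching the relative sphericality condition of~\ref{def geom sph}.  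The main technical obstacle is controlling the interaction between CM-ness of $\sDefAlg$ on $Y$ and perfection of $\twistOb$ on $X$: one direction exploits the tilting equivalence and crepancy of $f$ to transport Serre functors between the two sides, while the reverse requires a duality argument showing that a failure of perfection for $\twistOb$ forces a failure of depth for $\sDefAlg$.

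For the codimension criterion, when $\dim\nonIso\leq\dim Y-3$ the sheaf $\sDefAlg$ is supported in codimension at least $3$ in the Gorenstein base $Y$.  Since $\sDefAlg$ is a coherent quotient of $\vTiltAlg^{\op}$, depth bounds are inherited from the ambient sheaf, and Serre's $S_2$ criterion on $Y$ combined with the codimension hypothesis forces the Cohen--Macaulay condition automatically.  Perfection of $\twistOb$ follows by a parallel argument: the derived restriction of $\twistOb$ to the complement of $f^{-1}(\nonIso)$ vanishes by \ref{derived chase} and the identification of its support, so the only possible obstructions to finite Tor-dimension would live in codimension $\leq 2$, and these are ruled out by the codimension hypothesis.

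Finally, granted \eqref{main twist 1 intro 1} and \eqref{main twist 1 intro 2}, to conclude that $\twist$ is an autoequivalence the plan is to verify the spherical functor axioms for
\[
\RDerived{f}_*\RsHom_X(\twistOb,\placeholder)\colon \Db(\coh X)\to\Db(\mod\sDefAlg),
\]
whose twist is $\twist$ by the construction of \S\ref{section construct twist}.  Adjoints are supplied by the tilting equivalence \eqref{Db equiv intro} together with the usual tensor-Hom adjunction, so the main content lies in identifying the associated cotwist and twist as equivalences.  Here the crepancy of $f$ and the perfection of $\twistOb$ combine, via Grothendieck--Serre duality along $f$, to identify the key composition $\RsHom_X(\twistOb,\twistOb)$ with $\sDefAlg$ up to an appropriate shift, which is precisely the input needed for the cotwist to be an autoequivalence.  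I expect the main difficulty to be verifying that the sphericality condition, known formally-locally at each $z\in\nonIso$ by reduction to the established local theory of~\cite{DW1, DW3}, genuinely globalises; this reduces to showing that the relevant adjunction units and counits commute with completion at each $z$, which relies crucially on $\sDefAlg$ and $\twistOb$ being honest (derived) sheaves on $Y$ and $X$ respectively, rather than merely derived-category objects, as secured by the construction in~\S\ref{New inv section}.
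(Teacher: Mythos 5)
Your reduction of the equivalence \eqref{main twist 1 intro 1}$\Leftrightarrow$\eqref{main twist 1 intro 2} to a complete--local statement at closed points of $\nonIso$, using hypersurface $2$-periodicity, is indeed the paper's route (via \ref{thm zariski local tilting} and \ref{spher prop complete}), though the real work there is an explicit syzygy/approximation analysis of the minimal projective resolution of $\CA$, not a transport of Serre functors. The genuine gaps are in your other two steps. For the ``automatic in codimension $\geq 3$'' claim, your argument that $\sDefAlg$, being a coherent quotient of $\vTiltAlg^{\op}$, ``inherits depth bounds from the ambient sheaf'' so that $S_2$ plus the codimension hypothesis forces Cohen--Macaulayness, is false as a principle: quotients do not inherit depth, and a module supported in codimension three over a Gorenstein ring is in no way automatically CM. Likewise ``obstructions to finite Tor-dimension live in codimension $\leq 2$'' is not valid (the residue field at a singular point has infinite projective dimension whatever the codimension of that point). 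The paper's actual argument (\ref{case 3}) is a single explicit computation: the hypersurface hypothesis gives a matrix factorization $0\to K\to F\to F\to K\to 0$ of the non-free summand $K$, the codimension hypothesis gives $\Ext^1_{\mathfrak{R}}(K,K)=0$ by the depth lemma (\ref{Ext vanishes}), and applying $\Hom_{\mathfrak{R}}(\mathfrak{R}\oplus K,\placeholder)$ produces a length-three projective resolution of $\CA$; Auslander--Buchsbaum then gives $\CA\in\CM_{d-3}\mathfrak{R}$ and $\pd_{\AB}\CA=3$. Both the CM property and the perfection of $\twistOb$ come out of this computation, not from support considerations.

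For the final autoequivalence statement, your plan to verify spherical functor axioms for $\Rf_*\RsHom_X(\twistOb,\placeholder)$ and to identify $\RsHom_X(\twistOb,\twistOb)$ with $\sDefAlg$ ``up to an appropriate shift'' runs into a concrete obstruction which the paper explicitly flags: the spherical parameter $t$ and the local dimension $\dim \nonIso_y$ vary over closed points of $Z$ (by \ref{spher prop complete}, $t=d-\dim_{\mathfrak{R}}\CA$ can be $2$ at some points and $3$ at others), so there is no single global shift, and no uniform Grothendieck-duality identification of the cotwist; this non-uniformity is precisely why condition \eqref{main twist 1 intro 1} is phrased via $\CM_{\cS}$ rather than a fixed $\CM_{d-t}$, and why the paper states in the introduction that the spherical-functor formalism would not simplify the proofs. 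The paper instead proves the autoequivalence by showing that $\sIdeal$ is Zariski-locally a tilting $\Lambda$-module (\ref{complete tilting}, \ref{thm zariski local tilting}), so that $\RsHom_{\sTiltAlg}(\sIdeal,\placeholder)$ is an equivalence: the unit and counit of the adjunction $(\placeholder\Lotimes{\sTiltAlg}\sIdeal)\dashv\RsHom_{\sTiltAlg}(\sIdeal,\placeholder)$ are defined locally, restrict on affine opens to the corresponding maps for $\RHom_{\Lambda}(I_{\con},\placeholder)$, hence their cones vanish; preservation of $\Db(\coh X)$ is then a finite-cover coherence check. If you wish to keep the spherical-functor route, you must at minimum deal with the varying-$t$ problem stalkwise, which is exactly the extra work the paper's tilting argument avoids.
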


We remark that both parts of \ref{main twist 1 intro}\eqref{main twist 1 intro 1} can fail in general, and indeed if $\nonIso$ is not equidimensional, then $\sDefAlg$ is not relatively spherical for some $z\in Z$.

The last statement in \ref{main twist 1 intro} ensures that our framework recovers previous autoequivalences associated to flopping contractions~\cite{Toda flop, DW1, BB}, since the assumptions of \ref{main twist 1 intro} are satisfied in the one-dimensional fibre flops setting described in \cite[Theorem~C]{VdB1d}, but the main advantage of  \ref{main twist 1 intro} is that it includes other interesting settings, which we briefly  outline here.

Our first new application is to the varieties of singular $n\times n$ matrices, namely the determinantal varieties~$\mathds{k}[x_{ij}]/(\operatorname{det} x)$.  For each such variety, the Springer resolution admits a suitable tilting bundle by work of Buchweitz, Leuschke, and Van~den~Bergh~\cite{BLV}.

\begin{cor}[=\ref{det var thm main}]\label{det var thm}
Consider the Springer resolution $X\to Y$ of the variety of singular $n\times n$ matrices.  Then $\twist$ is an autoequivalence of $X$.
\end{cor}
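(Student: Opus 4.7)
The plan is to apply Theorem~\ref{main twist 1 intro} to the Springer resolution $f\colon X\to Y$, and to invoke its final clause, which guarantees that $\twist$ is an autoequivalence whenever $\dim\nonIso\leq\dim Y-3$. What I therefore need to check is: (i) the general setup of \ref{key assumptions} holds; (ii) $f$ is crepant; (iii) every completion $\widehat{\cO}_{Y,z}$ is a hypersurface; and (iv) the codimension inequality $\dim\nonIso\leq\dim Y-3$.

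For (i), the Buchweitz--Leuschke--Van~den~Bergh tilting bundle on $X$ from~\cite{BLV} contains $\cO_X$ as a summand and induces the derived equivalence~\eqref{Db equiv intro}; this case is already flagged in \ref{assumptions do hold}\eqref{assumptions do hold 1} as fitting the axiomatic framework, so nothing additional needs verification here. Item (ii) is classical. For (iii), the base $Y=\Spec\K[x_{ij}]/(\det x)$ is globally cut out by the single equation $\det x=0$ inside $\mathbb{A}^{n^2}$, so each stalk and its completion is automatically a hypersurface.

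The only step requiring computation is (iv). The Springer map $f$ is an isomorphism above the open stratum of matrices of rank exactly $n-1$, so $\nonIso$ is precisely the locus of matrices of rank at most $n-2$. Applying the classical codimension formula that the matrices of rank $\leq r$ form a subvariety of codimension $(n-r)^2$ in $\mathbb{A}^{n^2}$, taking $r=n-2$ gives codimension $4$ in $\mathbb{A}^{n^2}$, hence codimension $3$ in the hypersurface $Y$. Thus $\dim\nonIso=\dim Y-3$, condition (iv) is satisfied, and Theorem~\ref{main twist 1 intro} immediately delivers the conclusion.

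There is no substantive obstacle at the level of this corollary itself: the heavy lifting is done by the sheafy construction of $\sDefAlg$, the autoequivalence criterion of Theorem~\ref{main twist 1 intro}, and the verification at \ref{assumptions do hold}\eqref{assumptions do hold 1} that the BLV tilting bundle satisfies the axiomatic framework. Given those inputs, the proof reduces to invoking the codimension-$3$ clause after a standard rank-locus dimension count.
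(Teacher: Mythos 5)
Your proposal is correct and matches the paper's argument: the paper likewise reduces to the autoequivalence criterion by noting that $Y$ is the determinant hypersurface, that it is smooth in codimension two (equivalently, your rank-$\leq n-2$ count giving $\dim \nonIso = \dim Y - 3$), and that the BLV tilting bundle has $\cO_X$ as a summand. The only difference is presentational — you carry out the rank-locus dimension count explicitly where the paper cites \cite[\S 5.1]{BLV}.
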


We then establish results for one-dimensional fibre contractions which are not an isomorphism in codimension two, where the conditions in \ref{main twist 1 intro} are not automatic.  Amongst others, this includes partial resolutions of Kleinian singularities, and $3$-fold crepant divisor-to-curve examples. Leveraging our control of $\twistOb$ in this setting, which by \ref{main E results into} is a sheaf with support coinciding with the exceptional locus, we reformulate the first part of \ref{main twist 1 intro}\eqref{main twist 1 intro 1}, namely $\sDefAlg$ being a Cohen--Macaulay sheaf on $Y$, in terms of a Cohen--Macaulay property for $\twistOb$ on $X$.

\begin{thm}[=\ref{up and down CM}]\label{intro CM global}
With the general setup in \ref{main twist 1 intro}, suppose further that the fibres of $f$ have dimension at most one.  Then the following are equivalent.
\begin{enumerate}
\item $\sDefAlg$ is a Cohen--Macaulay sheaf on $Y$.
\item\label{intro CM global 2} For all $y\in \nonIso$, and all $x\in f^{-1}(y)$, we have $\twistOb_x\in\CM_{\dim \nonIso_y+1}\cO_{X,x}$.
\end{enumerate}
In particular, if these conditions hold, then above every  $y\in\nonIso$, the exceptional locus is equidimensional of dimension $\dim Z_y+1$.
\end{thm}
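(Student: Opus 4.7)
The plan is to exploit the dual tilting equivalence that defines $\twistOb$, together with the sheaf properties established in the one-dimensional fibre case, to convert Cohen--Macaulayness on $Y$ into Cohen--Macaulayness on $X$ via a uniform depth shift of $+1$ coming from the one-dimensional exceptional fibres.

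Concretely, I would work locally at a closed point $y \in \nonIso$, writing $R = \cO_{Y,y}$, $\Lambda = \vTiltAlg_y$, and $N = \sDefAlg_y$, so that $N$ is a finitely generated $\Lambda^{\op}$-module with $\Supp_R N \subseteq \nonIso$. By \ref{sheaf in deg 0} the dual tilting tensor product has no higher Tors, so for any $x \in f^{-1}(y)$,
\[
\twistOb_x \;=\; N \otimes_{\Lambda^{\op}} \vlocGen^*_x
\]
as $\cO_{X,x}$-modules, and by \ref{supp equal excp} the support of $\twistOb$ sweeps out the whole exceptional locus as $x$ varies over $f^{-1}(\nonIso)$. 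The heart of the argument is the following depth-shift identity: for a coherent $\Lambda^{\op}$-module $M$ supported on $\nonIso$ and every $x \in f^{-1}(y)$,
\[
\depth_{\cO_{X,x}}\!\bigl(M \otimes_{\Lambda^{\op}} \vlocGen^*\bigr)_x \;=\; \depth_R M_y + 1,
\]
the $+1$ reflecting the one-dimensional fibres of $f$ over $\nonIso$. I would prove this by matching local cohomology groups on the two sides: since the tilting equivalence intertwines $\Rf_*$ with the $\vTiltAlg$-module structure, and since the fibres of $f$ are one-dimensional and Cohen--Macaulay (using crepancy and normality), local cohomology supported on $\nonIso$ on the $Y$-side shifts by exactly one degree to local cohomology supported on $f^{-1}(\nonIso)$ on the $X$-side. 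Applying this identity to $M = \sDefAlg$ and noting that $\dim \Supp_R \sDefAlg_y = \dim \nonIso_y$, the equivalence of (1) and (2) is immediate.

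For the final ``in particular'' assertion, if $\twistOb_x \in \CM_{\dim \nonIso_y + 1}\cO_{X,x}$ for every $x \in f^{-1}(y)$, then in particular $\dim \Supp_{\cO_{X,x}} \twistOb_x = \dim \nonIso_y + 1$ at each such $x$; combined with $\Supp_X \twistOb$ being the exceptional locus, this forces every irreducible component of the exceptional fibre above $y$ to have dimension exactly $\dim \nonIso_y + 1$, giving equidimensionality.

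The main obstacle is the depth-shift identity itself. The tricky point is reconciling two different depth notions: $\depth_R N$ is a commutative invariant of the noncommutative $\Lambda^{\op}$-module $N$, while $\depth_{\cO_{X,x}} \twistOb_x$ is honest commutative depth on a different scheme. The cleanest route is probably through Grothendieck duality combined with crepancy ($f^{!}\cO_Y \cong \cO_X$): duality converts local cohomology on $Y$ into local cohomology on $X$ up to the precise $+1$ shift arising from the one-dimensional relative dimension, and then via the standard Ext/local-cohomology characterisation of depth, into the claimed formula. A more elementary alternative would be to use that $\vlocGen^*$ has a resolution by summands involving the $\cO_X$-summand of $\vlocGen$, reducing the statement to the classical fact that, along a one-dimensional fibre of a crepant contraction, the ``pullback--tensor'' by a vector bundle bumps depth by exactly one.
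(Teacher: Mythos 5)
Your overall strategy---passing through the tilting equivalence and using duality plus crepancy to compare a Cohen--Macaulay condition for $\sDefAlg$ on $Y$ with one for $\twistOb$ on $X$---has the right shape, but there is a genuine gap at exactly the point you call the heart of the argument: the depth-shift identity $\depth_{\cO_{X,x}}\bigl(M\otimes_{\Lambda^{\op}}\vlocGen^*\bigr)_x=\depth_R M_y+1$ is asserted, not proved, and in the generality you state it is not even well-posed. For an arbitrary coherent $\Lambda^{\op}$-module $M$ supported on $\nonIso$, the object $M\Lotimes{\Lambda^{\op}}\vlocGen^*$ need not be a sheaf (only for $\sDefAlg$ is this established, via \ref{derived chase}, \ref{Per lemma track} and \ref{sheaf in deg 0}), so its depth at $x$ is undefined; and the proposed justification---that one-dimensional fibres make local cohomology supported on $\nonIso$ shift by exactly one degree---is not a valid principle. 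Objects supported on the exceptional locus over a fixed $y$ may be concentrated on a single exceptional curve or spread over a divisor dominating a curve in $\nonIso$ (compare the non-equidimensional situation of \ref{poky geom example}), and no uniform pointwise shift of depth over all $x\in f^{-1}(y)$ follows from relative dimension alone; indeed a pointwise depth equality at every $x$ of the fibre is essentially equivalent to the statement being proved, so assuming it is close to circular. The ``elementary alternative'' you mention (tensoring by a bundle along a one-dimensional fibre of a crepant contraction bumps depth by exactly one) is likewise not a classical fact.

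What the paper does, and what your sketch is missing, is the actual mechanism producing the $+1$. Locally at $y$ one uses local duality over the Gorenstein ring $R_y$, then the Iyama--Reiten isomorphism $\Ext^i_{R_y}(-,R_y)\cong\Ext^i_{\Lambda_y}(-,\Lambda_y)$ (valid because crepancy makes $\Lambda$ maximal Cohen--Macaulay and singular Calabi--Yau), then the compatibility of $\RHom_\Lambda(-,\Lambda)$ with $\RsHom_X(-,\cO_X)$ across the tilting equivalence (\ref{objectwise iso}). This converts $\sDefAlg_y\in\CM_{k}\cO_{Y,y}$ into the condition that $\RsHom_{X_y}(\twistOb|_{X_y},\cO_{X_y})[d-k]$ lies in $\PerOne(X_y,R_y)$, since $\locGen$ progenerates $\Per$ and its dual progenerates $\PerOne$. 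Grothendieck duality, crepancy and $\Rf_*\twistOb=0$ (\ref{derived chase}) give $\Rf_*\RsHom_{X_y}(\twistOb|_{X_y},\cO_{X_y})=0$, and then \ref{Per lemma track}\eqref{Per lemma track 1} forces such an object of $\PerOne$ to be concentrated in degree $-1$: the exact $+1$ comes from this degree discrepancy between $\Per$ and $\PerOne$ for $\Rf_*$-acyclic objects, not from a fibre-dimension shift of local cohomology. Your final equidimensionality deduction from \ref{supp equal excp} is fine and matches the paper.
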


Combining with \ref{main twist 1 intro}, the dimension criterion above gives an easy-to-check obstruction to $\sDefAlg$ being relatively spherical, and we demonstrate this in various examples, such as~\ref{poky geom example}. Furthermore, the conditions above enable us to prove that $\twist$ is a equivalence in the motivating divisor-to-curve contraction example.

\begin{thm}[=\ref{one curve thm}]\label{one curve thm intro}
With the setup in \ref{intro CM global}, suppose in addition that $X$ is smooth and $\dim X=3$, such that every reduced fibre above a closed point in $Z$ contains precisely one irreducible curve.  Then 
$\twist$ is an autoequivalence of $X$. 
\end{thm}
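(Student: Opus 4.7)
The plan is to apply Theorem~\ref{main sph thm}, so we must verify (a) that $\sDefAlg$ is a Cohen--Macaulay sheaf on $Y$ with $\twistOb$ perfect on $X$, and (b) that $\sDefAlg$ is relatively spherical at every closed point $z\in Z$.  Perfectness of $\twistOb$ is automatic, since $X$ smooth gives $\Db(\coh X)=\Perf(X)$.

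For the Cohen--Macaulay condition, I would use Theorem~\ref{up and down CM} to translate it into the stalkwise requirement $\twistOb_x\in\CM_{\dim Z_y+1}\cO_{X,x}$ for $y\in Z$ and $x\in f^{-1}(y)$.  By Theorem~\ref{supp equal excp} the support of $\twistOb$ is the exceptional locus, and the one-irreducible-curve hypothesis makes this support equidimensional of the correct dimension: a single irreducible curve when $\dim Z=0$ (the classical flopping setting), and a divisor fibred by curves over $Z$ when $\dim Z=1$.  Upgrading support-equidimensionality to the full depth condition is the heart of the argument; I would do this by passing to the formal fibre over each closed $z\in Z$, using Theorem~\ref{recovers universal locally} to identify $\twistOb$ there with the universal noncommutative deformation sheaf $\twistOb_z$ up to additive equivalence, and then using the one-curve hypothesis to pin down the basic representative of this additive class as a canonical sheaf supported cleanly on the reduced fibre curve, with no embedded components of smaller dimension.

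For relative sphericity, I would invoke Theorem~\ref{B gives def locally 2} to reduce to the complete local contraction algebras.  In the smooth crepant $3$-fold one-dimensional-fibre setting, these are known to be symmetric finite-dimensional algebras by~\cite{DW1}, and this symmetric (relative Calabi--Yau) property is exactly what Definition~\ref{def geom sph} demands; the morita equivalence between $\sDefAlg_z$ and the basic contraction algebra preserves this, so the condition lifts to $\sDefAlg$.

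The main obstacle is the Cohen--Macaulay depth calculation: while support-equidimensionality follows directly from the one-curve hypothesis, genuinely ruling out embedded components in $\twistOb_x$, especially at points $x$ lying over the closed points of a positive-dimensional $Z$, requires careful use of the local-to-global comparison in Theorem~\ref{recovers universal locally} together with the explicit structure of the universal deformation sheaf in the one-curve case.  Once both (a) and (b) are in hand, Theorem~\ref{main sph thm} delivers the autoequivalence.
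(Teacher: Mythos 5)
There is a genuine gap, in two places. First, your Cohen--Macaulay step is not actually an argument: you translate via \ref{up and down CM} into the condition $\twistOb_x\in\CM_{\dim Z_y+1}\cO_{X,x}$, observe equidimensionality of $\Supp\twistOb$ from \ref{supp equal excp} and the one-curve hypothesis, and then say the depth condition should follow by ``pinning down'' the universal sheaf from \ref{recovers universal locally}. But equidimensionality of the exceptional locus is only the necessary consequence recorded at the end of \ref{up and down CM}, not a sufficient criterion --- the paper's own \ref{poky geom example} stresses that the middle resolution there has equidimensional exceptional locus and still requires a separate syzygy computation to conclude $\sDefAlg_2\in\CM_{\cS}Y$. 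You acknowledge this depth calculation as ``the heart of the argument'' and as ``the main obstacle,'' but supply no mechanism for it; the one-curve hypothesis never enters your sketch in a way that forces depth. The paper's proof of \ref{one curve thm} takes a completely different, purely algebraic route that makes the one-curve hypothesis do real work: complete locally, the hypothesis forces $\CA$ to be \emph{local} (and, via \ref{loc D and hyper}, forces $\widehat{\cO}_{Y,z}$ to be a hypersurface); smoothness of $X$ gives $\pd_\AB\CA<\infty$, $\dim=3$ gives $\id_\CA\CA\leq 1$ by \cite[6.19(4)]{IW4}, and then Ramras' theorem \cite[2.15]{Ramras}, which needs locality, yields $\depth_{\mathfrak{R}}\CA=\dim_{\mathfrak{R}}\CA$, i.e.\ $\CA\in\CM_{\cS}\mathfrak{R}$. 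That, plus $\twistOb\in\Perf(X)$ from smoothness, feeds into \ref{thm 1d setup updown}\eqref{thm 1d setup updown 2}, and one is done; there is no need to verify sphericity separately, since \ref{main sph thm}/\ref{thm 1d setup updown} state the conditions are \emph{equivalent}.

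Second, your sphericity paragraph rests on a false premise in this setting. The complete local contraction algebras $\CAz$ of a $3$-fold divisor-to-curve contraction are generally \emph{infinite}-dimensional --- in \ref{D4 to A1 ex} the stalk at the origin is the completion of $\mathbb{C}\langle a,b\rangle/(a^2,b^2)$ --- so the ``symmetric finite-dimensional algebras'' of \cite{DW1} (which pertain to flopping contractions, where $Z$ is a point) are not available here. Moreover, relative sphericity in the sense of \ref{def geom sph} is a statement about $\Ext^j_{\widehat{\sTiltAlg}_z}(\widehat{\sDefAlg}_z,T)$, i.e.\ about the projective resolution of the contraction algebra over the larger algebra $\widehat{\sTiltAlg}_z$; it is not an intrinsic symmetry property of $\CAz$, and morita-transporting a symmetric structure would not address it. So both halves of your verification need to be replaced; the paper's Ramras-based complete-local argument is the missing ingredient.
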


\begin{remark}
We expect that, when $\twist$ is an autoequivalence, it can be expressed as a twist of a spherical functor as in \cite{Anno,Addington,AL2,Kuznetsov}. However, we do not address this question in this paper, as it would not simplify our proofs.
\end{remark}

\subsection{Acknowledgements} We are grateful for conversations with
Arend Bayer, %conversations about skyscrapers
Agnieszka Bodzenta, % conversations in Scotland re equivalence proof 
and Yukinobu Toda. % conversations at IPMU about nc enhancements	

\subsection{Conventions}  
Throughout we work over an algebraically closed field $\K$. Unqualified uses of the word `module' refer to right modules, and $\mod A$ denotes the category of finitely generated right $A$-modules.  We use the functional convention for composing homomorphisms, so $f\circ g$ means $g$ then $f$.  In particular, naturally this makes $M\in \mod A$ into an $\End_A(M)^{\op}$-module.

For $a \in \cA$ an abelian category, we let $\add a$ denote all possible summands of finite direct sums of $a$. Given two objects $a,b\in\cA$ where $a$ is a summand of $b$, we then write $[a]$ for the two-sided ideal of $\End_\cA(b)$ consisting of all morphisms that factor through $\add a$.

\section{Global Thickenings}\label{section global thickening}

In this section we will noncommutatively enhance the non-isomorphism locus of a contraction $f\colon X\to Y$ which satisfies some mild conditions.  We first recall what this means, and fix the setting.

\begin{defin}\label{setupglobal}
By a contraction, we will mean a projective birational morphism $f\colon X\to Y$ between normal varieties over $\K$, satisfying $\Rf_*\cO_X=\cO_{Y}$.  It will be assumed that $Y$ is quasi-projective.
\end{defin}

\begin{notation}
Given a contraction $f\colon X\to Y$, write $\nonIso$ for the locus of (not necessarily closed) points of $Y$ above which $f$ is not an isomorphism.  
\end{notation}

Given a contraction, we will furthermore assume the following condition.
\begin{assumption}\label{key assumptions}
For a contraction $f\colon X\to Y$, where $d = \dim X\geq 2$, assume that there is vector bundle $\vlocGen=\cO_X\oplus\vlocGen_0$ on $X$, such that the following conditions hold.
\begin{enumerate}
\item\label{key assumptions 1} The natural map $f_*\sEnd_X(\vlocGen)\to \sEnd_Y(f_*\vlocGen)$ is an isomorphism.
\item\label{key assumptions 2} The bundle $\vlocGen$ is tilting relative to $Y$, namely
\[
\RDerived{f}_* \sEnd_X(\vlocGen) = f_*\sEnd_X(\vlocGen)
=: \vTiltAlg,
\]
and furthermore there is an equivalence
\begin{equation}
\label{global Db}
\RDerived{f}_* \RsHom_X(\vlocGen,\placeholder) \colon \Db(\coh X) \overset{\sim}{\longrightarrow} \Db(\coh (Y, \vTiltAlg)).
\end{equation}
\end{enumerate}
Here $\vTiltAlg$ is considered as a sheaf of $\cO_Y$-algebras in the sense of~\cite[\S18.1]{KS}, and the bounded derived category of modules over $\vTiltAlg$ is denoted by $\Db(\coh (Y, \vTiltAlg))$.
\end{assumption}
With mind to our applications, we will show in \ref{assumptions do hold} below that the assumption is rather mild, and holds in general settings.  This requires the following easy consequence of \ref{key assumptions}\eqref{key assumptions 2}.
\begin{lemma}\label{easy tilt lemma}
Suppose that \ref{key assumptions}(\ref{key assumptions 2}) holds, $V$ is an affine open subset of $Y$, and $U:=f^{-1}(V)$.  Then $\vlocGen|_U$ is a tilting bundle on $U$.
\end{lemma}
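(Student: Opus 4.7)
The plan is to verify the two defining properties of a tilting bundle on $U$ --- vanishing of higher self-extensions, and generation of $\Db(\coh U)$ --- using that $V$ is affine, combined with the relative tilting statement in~\ref{key assumptions}\eqref{key assumptions 2}. That $\vlocGen|_U$ is a vector bundle is inherited from $\vlocGen$.

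For the self-Ext vanishing, the open immersion $V\hookrightarrow Y$ is flat, so flat base change gives
\[
\RDerived{(f|_U)}_*\sEnd_U(\vlocGen|_U) \cong \bigl(\Rf_*\sEnd_X(\vlocGen)\bigr)|_V = \vTiltAlg|_V,
\]
concentrated in degree zero. Then
\[
\Ext^i_U(\vlocGen|_U,\vlocGen|_U) = H^i(U,\sEnd_U(\vlocGen|_U)) = H^i(V,\vTiltAlg|_V),
\]
and the last group vanishes for $i>0$, since $V$ is affine and $\vTiltAlg|_V$ is quasi-coherent as an $\cO_V$-module.

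For generation, I would argue as follows. The equivalence~\eqref{global Db} shows that $\vlocGen$ is a classical generator of $\Db(\coh X)$, in the sense that the smallest thick subcategory containing $\vlocGen$ is the whole category. Any coherent sheaf on $U$ extends to a coherent sheaf on $X$ (a standard Noetherian fact), so the restriction functor $j_X^*\colon \Db(\coh X)\to\Db(\coh U)$ is essentially surjective. Combined with exactness of $j_X^*$ and the observation that it preserves thick subcategories and sends $\vlocGen$ to $\vlocGen|_U$, this forces $\vlocGen|_U$ to be a classical generator of $\Db(\coh U)$.

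The main obstacle, though routine, is the Noetherian extension step for the generation argument; modulo this, the proof is a formal reduction to the affine base $V$, everything else being bookkeeping with flat base change and cohomology vanishing on affines.
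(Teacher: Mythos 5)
Your Ext-vanishing half is fine and is essentially the paper's argument: both reduce $\Ext^i_U(\vlocGen|_U,\vlocGen|_U)$ to the cohomology of $\vTiltAlg|_V$ over the affine $V$ (the paper phrases this via the commutative diagram obtained by restricting the equivalence \eqref{global Db} to $V$, you via flat base change; either works).

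The generation half has a genuine gap. The equivalence \eqref{global Db} does \emph{not} show that $\vlocGen$ classically generates $\Db(\coh X)$: it sends $\vlocGen$ to $\vTiltAlg$, and the module $\vTiltAlg$ is not a classical generator of $\Db(\coh (Y,\vTiltAlg))$ unless $Y$ is affine and $\vTiltAlg$ has finite global dimension --- under \ref{key assumptions} the base $Y$ is merely quasi-projective and $X$ may be singular. Indeed the claim is false in general: $\vlocGen$ is a perfect complex, so the thick subcategory it generates consists of perfect complexes, and hence no vector bundle classically generates $\Db(\coh X)$ when $X$ is singular (which happens for the crepant partial resolutions the paper cares about); and over a non-affine base already $\cO_X\in\add\vlocGen$ illustrates the problem, since for example $\cO$ on $\mathbb{P}^2$ generates only a proper admissible subcategory. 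The same objection hits your target statement: what ``tilting on $U$'' requires, and what the paper proves and later uses (see the remark following \ref{assumptions do hold}), is generation in the sense that $\RHom_U(\vlocGen|_U,a)=0$ for $a\in\D(\Qcoh U)$ forces $a=0$, i.e.\ compact generation of $\D(\Qcoh U)$ --- not classical generation of $\Db(\coh U)$, which fails whenever $U$ is singular. The paper's route avoids all of this: given such an $a$, adjunction along the open immersion $i\colon U\hookrightarrow X$ gives $\RHom_X(\vlocGen,\RDerived{i}_*a)=0$, generation of $\D(\Qcoh X)$ by $\vlocGen$ then gives $\RDerived{i}_*a=0$, and full faithfulness of $\RDerived{i}_*$ gives $a=0$. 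So the step you flagged as the main obstacle (extending coherent sheaves from $U$ to $X$ --- which in any case needs slightly more than extending a single sheaf to get essential surjectivity on bounded complexes) is not where the difficulty lies; the unjustified classical-generator claim is.
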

\begin{proof}
Setting $\Lambda:=\End_{U}(\vlocGen|_U)$, the following diagram commutes.
\[
\begin{array}{c}
\begin{tikzpicture}
\node (roof) at (0,0) {$\Db(\coh X)$};
\node (X) at (5,0) {$\Db(\coh(Y,\vTiltAlg))$};
\node (X') at (0,-1.5) {$\Db(\coh U )$};
\node (base) at (5,-1.5) {$\Db(\mod \Lambda)$};
\draw[->] (roof) -- node[above] {$\scriptstyle \RDerived{f}_* \RsHom_X(\vlocGen,-)$} node[below] {$\scriptstyle \sim$} (X);
\draw[->] (roof) --  node[right] {$\scriptstyle |_U$}  (X');
\draw[->] (X) --  node[right] {$\scriptstyle |_V$} (base);
\draw[->] (X') -- node[above] {$\scriptstyle \RHom_U(\vlocGen|_U,-)$}   (base);
\end{tikzpicture}
\end{array}
\]
In particular, $\RHom_U(\vlocGen|_U,\vlocGen|_U)\cong\vTiltAlg|_V\cong\Lambda$, which gives Ext vanishing.  For generation, suppose that $a\in\D(\Qcoh U)$ with $\RHom_U(\vlocGen|_U,a)=0$.  We must show that $a=0$. By adjunction $\RHom_X(\vlocGen,\RDerived i_*a)=0$ where $i\colon U\hookrightarrow X$ is the open inclusion.  Since $\vlocGen$ compactly generates $\D(\Qcoh X)$, it follows that $\RDerived i_*a=0$, and thus $a=0$ since $\RDerived i_*$ is fully faithful. 
\end{proof}

\begin{prop}\label{assumptions do hold}
Assumption~\ref{key assumptions} is guaranteed to hold in the following two settings.
\begin{enumerate}
\item\label{assumptions do hold 1} $Y$ is a Gorenstein $d$-fold, $f$ is crepant, and $X$ admits a relative tilting bundle containing $\cO_X$ as  a summand.
\item\label{assumptions do hold 2} $X$ is a $d$-fold,  and the fibres of $f$ have dimension at most one.
\end{enumerate}
\end{prop}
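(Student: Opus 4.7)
For both parts, my task is to exhibit a vector bundle $\vlocGen = \cO_X \oplus \vlocGen_0$ and verify the two clauses of Assumption~\ref{key assumptions}. The verification of condition~\eqref{key assumptions 1} is a stalk-local question on $Y$: on an affine open $V = \Spec R \subset Y$ with preimage $U=f^{-1}(V)$, it amounts to the canonical ring map
\[
\End_U(\vlocGen|_U) \longrightarrow \End_R\bigl(\Gamma(U,\vlocGen|_U)\bigr)
\]
being an isomorphism. On the complement $V\setminus\nonIso$, where $f$ restricts to an isomorphism, this is tautological; globally on $V$ it will follow from a reflexivity argument combined with the codimension bound $\dim \nonIso \leq \dim Y - 2$, which in both parts holds by normality of $Y$ together with either birationality or the fibre-dimension hypothesis on $f$.

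For part~\eqref{assumptions do hold 1}, the bundle $\vlocGen$ is provided by hypothesis, and ``relative tilting'' is essentially the statement of condition~\eqref{key assumptions 2}, so the only remaining task is to establish reflexivity of the two sides of the comparison map. My approach would be to invoke relative Grothendieck duality on $f\colon X\to Y$, using crepancy $\omega_{X/Y}\simeq\cO_X$ and the tilting vanishing $\RDerivedi{i}{f}_*\vlocGen=0$ for $i>0$, to conclude that $\Gamma(U,\vlocGen|_U)$ and $\Gamma(U,\sEnd_X(\vlocGen)|_U)$ are maximal Cohen--Macaulay over $R$, and hence reflexive. The comparison map, being an isomorphism of reflexive sheaves away from a codimension-two locus, must then be an isomorphism globally.

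For part~\eqref{assumptions do hold 2}, the bundle $\vlocGen$ must be constructed; I would invoke Van den Bergh's theorem for contractions with one-dimensional fibres~\cite{VdB1d}, which locally on $Y$ produces a relative tilting bundle with $\cO$ as a summand. The canonicity of that construction allows these local bundles to be glued into a global $\vlocGen$, which then satisfies condition~\eqref{key assumptions 2} as part of the content of the cited theorem. For condition~\eqref{key assumptions 1}, the one-dimensional-fibre hypothesis together with normality again forces $\dim\nonIso\leq\dim Y-2$, so the reduction to reflexivity applies; the required reflexivity of $\Gamma(U,\vlocGen|_U)$ and $\Gamma(U,\sEnd_X(\vlocGen)|_U)$ is built into Van den Bergh's construction, via the vanishing $\RDerivedi{i}{f}_*\vlocGen=0$ for $i\geq 1$ combined with the depth properties of his chosen summands.

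The main obstacle in both parts is the reflexivity step feeding into condition~\eqref{key assumptions 1}, as everything else is either the content of the cited theorems or a routine codimension estimate. In part~\eqref{assumptions do hold 1} reflexivity rests on the crepant Gorenstein hypothesis via Grothendieck duality, while in part~\eqref{assumptions do hold 2} it rests on the control of $f_*$ of Van den Bergh's specific bundles. Without one of these hypotheses, reflexivity, and hence the isomorphism in condition~\eqref{key assumptions 1}, can fail.
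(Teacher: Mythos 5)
Your treatment of part~\eqref{assumptions do hold 1} is essentially the paper's own argument: crepancy and Grothendieck duality, together with the tilting vanishing, show that $f_*\vlocGen$ and $\End_X(\vlocGen)$ are maximal Cohen--Macaulay, hence reflexive, and the comparison map of \ref{key assumptions}\eqref{key assumptions 1}, being an isomorphism away from the codimension-$\geq 2$ non-isomorphism locus, is then an isomorphism.

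For part~\eqref{assumptions do hold 2}, however, there is a genuine gap. You again reduce \ref{key assumptions}\eqref{key assumptions 1} to reflexivity of $\Gamma(U,\vlocGen|_U)$ and $\Gamma(U,\sEnd_X(\vlocGen)|_U)$, asserting that this reflexivity is ``built into'' Van den Bergh's construction via $\RDerivedi{i}{f}_*\vlocGen=0$ for $i\geq 1$ and depth properties of the summands. In this setting there is no crepancy and no Gorenstein or Cohen--Macaulay hypothesis on $Y$, vanishing of higher direct images gives no lower bound on the depth of $f_*\vlocGen$, and the claimed reflexivity is false in general: already for the blow-up of a point on a smooth surface, Van den Bergh's bundle is $\vlocGen=\cO_X\oplus\cO_X(-E)$ and $f_*\cO_X(-E)=\mathfrak{m}$, whose double dual is $R\neq\mathfrak{m}$; consequently $\sEnd_Y(f_*\vlocGen)\cong R\oplus\mathfrak{m}\oplus R\oplus R$ is not reflexive either, even though condition \ref{key assumptions}\eqref{key assumptions 1} does hold there. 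So the ``reflexive sheaves isomorphic off codimension two'' mechanism cannot establish part~\eqref{assumptions do hold 2}. The paper's route is different: after using \ref{easy tilt lemma} to see that $\vlocGen|_U$ is tilting, hence $\Ext^1_U(\vlocGen|_U,\vlocGen|_U)=0$, it invokes \cite[Proposition~4.3]{DW2}, whose inputs are exactly normality of $Y$, projective birationality of $f$, the relative global generation of Van den Bergh's bundle, and this $\Ext^1$-vanishing. It is the global generation --- which your sketch mentions only in passing --- and not reflexivity that drives the isomorphism in the one-dimensional fibre case; some replacement for the missing reflexivity, such as this cited result, is needed to complete your proof.
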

\begin{proof}
(1) Take $\vlocGen$ to be the relative tilting bundle on $X$, with $\vlocGen=\cO_X \oplus \vlocGen_0$. Condition~\ref{key assumptions}(\ref{key assumptions 2})  is immediate. Condition~\ref{key assumptions}(\ref{key assumptions 1}) is local, therefore it suffices to consider affine $Y=\Spec R$, in which case the condition translates into showing that
\begin{equation}
\End_X(\vlocGen)\to\End_R(f_*\vlocGen),\label{up to down 1}
\end{equation}
is an isomorphism. Note that $\End_X(\vlocGen)$ is a maximal Cohen--Macaulay $R$-module since $f$~is crepant \cite[4.8]{IW5}.  We next claim that $f_*\vlocGen$ is also maximal Cohen--Macaulay.   The tilting property for $\vlocGen$ implies that $H^i(\vlocGen_0)=0=H^i(\vlocGen_0^*)$ for all $i>0$, and so
\begin{align*}
\RHom_R(f_*\vlocGen_0,R)&\cong \RHom_R(\Rf_*\vlocGen_0,\cO_R) \\
& \cong \Rf_*\RsHom_X(\vlocGen_0,f^!\cO_R) \\
& \cong \Rf_*\RsHom_X(\vlocGen_0,\cO_X) \\
& =\Rf_*(\vlocGen_0^*)
=f_*(\vlocGen_0^*),
\end{align*}
where we use Grothendieck duality and crepancy. Thus $\RHom_R(f_*\vlocGen_0,R)$ is concentrated in degree zero, hence $f_*\vlocGen_0$ is maximal Cohen--Macaulay, and thence $f_*\vlocGen$ is also.  In particular, \eqref{up to down 1} is a morphism between reflexive $R$-modules, so since it is an isomorphism in codimension two, it must be an isomorphism.\\
(2) Let $\vlocGen$ denote the bundle constructed by Van den Bergh in \cite[3.3.2]{VdB1d}, which is relatively generated by global sections.  Condition~\ref{key assumptions}(\ref{key assumptions 2}) is \cite[3.3.1]{VdB1d}.  For condition~\ref{key assumptions}(\ref{key assumptions 1}), note first that $f_*$ gives rise to the natural morphism 
\[
\vTiltAlg=f_*\sEnd_X(\vlocGen)\to\sEnd_Y(f_*\vlocGen).
\]
It suffices to check that this is an isomorphism on open affines $V$ of $Y$, on which it is just the following, where we write $U:=f^{-1}(V)$. 
\begin{equation}
f_*\colon \End_U(\vlocGen|_U)\to \End_{V}(f_*\vlocGen|_{V})\label{up=down eq}
\end{equation}
By \ref{easy tilt lemma} $\vlocGen|_U$ is tilting, so $\Ext^1_U(\vlocGen|_U,\vlocGen|_U)=0$.  Since further $Y$ is normal, $f$ is projective birational, and $\vlocGen|_U$ is generated by global sections, it follows from \cite[\citetype{Prop }4.3]{DW2}  that \eqref{up=down eq} is an isomorphism. 
\end{proof}

\begin{remark}
The relative tilting assumption from \ref{assumptions do hold}\eqref{assumptions do hold 1} reduces, in the case of affine $Y = \Spec R$, to the condition that $\Ext_X^i(\vlocGen,\vlocGen)=0$ for $i>0$, and $\vlocGen$ generates.
\end{remark}

\begin{remark}\label{Craw--Ishii}
We remark that if $d=3$ in \ref{assumptions do hold}\eqref{assumptions do hold 1}, the assumption that $X$ admits a tilting bundle is automatic if $X$ is constructed as the moduli of an NCCR of an affine $Y$~\cite[\S 6]{VdB2}.  It is expected that the tilting bundle condition in \ref{assumptions do hold}\eqref{assumptions do hold 1} for $d=3$ always holds, as a consequence of the Craw--Ishii conjecture and work of Iyama and the second author~\cite[4.18(2)]{IW4}.
\end{remark}

\subsection{Construction of $\sIdeal$}\label{subsection main constr} In this subsection we work under the general assumptions of \ref{key assumptions}, and define an ideal subsheaf $\sIdeal$ of $\vTiltAlg:=f_*\sEnd_X(\vlocGen)\cong\sEnd_Y(f_*\vlocGen)$. To ease notation, we put $\cQ:=f_*\vlocGen$, so that $\vTiltAlg\cong\sEnd_Y(\cQ)$. For $\cF$ a sheaf, $s\in\cF(V)$, and $v\in V$, we write $\langle s,V\rangle_v$ for $s$ viewed in the stalk $\cF_v$. 
\begin{defin}\label{defin sheaf of ideals}
With notation as above, for each open subset $V$\! of $Y$,  define
\[
\sIdeal(V):=\{ s\in\End_{V}(\cQ|_{V})\mid 
\cQ_v\xrightarrow{\langle s,V\rangle_v}\cQ_v \mbox{ factors through }\add\cO_{Y,v} \mbox{ for all }v\in V\},
\]
which is a two-sided ideal of $\vTiltAlg(V)$. That is, $\sIdeal$ consists of local sections of $\vTiltAlg$ that at each stalk factor through a finitely generated projective $\cO_{Y,v}$-module.
\end{defin}

\begin{prop}\label{is sheaf of ideals}
$\sIdeal$ is a subsheaf of $\vTiltAlg$ consisting of two-sided ideals.
\end{prop}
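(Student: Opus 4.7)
The plan is to verify the two separate claims (subsheaf, and two-sided ideal) by directly unwinding the stalkwise factorization condition; since the defining property of $\sIdeal(V)$ is phrased purely in terms of germs, both claims should follow formally.

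First I would check that $\sIdeal(V)$ is a two-sided ideal of $\vTiltAlg(V) = \End_V(\cQ|_V)$. Closure under addition uses the direct sum trick: if $s_1, s_2 \in \sIdeal(V)$, then at each $v \in V$ their germs factor through some $P_1, P_2 \in \add \cO_{Y,v}$ respectively, and $\langle s_1+s_2, V\rangle_v$ factors through $P_1 \oplus P_2 \in \add \cO_{Y,v}$. For the ideal property, given $s \in \sIdeal(V)$ and $t \in \vTiltAlg(V)$, stalkwise the compositions $\langle t\circ s, V\rangle_v$ and $\langle s\circ t, V\rangle_v$ factor through the same $P \in \add\cO_{Y,v}$ through which $\langle s, V\rangle_v$ factors, by pre- or post-composing with $\langle t, V\rangle_v$.

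Next I would verify the presheaf restriction axiom: if $W \subseteq V$ are open and $s \in \sIdeal(V)$, then $s|_W \in \sIdeal(W)$. This is immediate because for any $w \in W$ the germ $\langle s|_W, W\rangle_w$ equals $\langle s, V\rangle_w$, which factors through $\add \cO_{Y,w}$ by hypothesis.

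Finally I would verify the sheaf gluing axiom. Given an open cover $V = \bigcup_i V_i$ and sections $s_i \in \sIdeal(V_i)$ agreeing on overlaps, the sheaf $\vTiltAlg$ gives a unique $s \in \vTiltAlg(V)$ restricting to each $s_i$. For any $v \in V$, choose $i$ with $v \in V_i$; then $\langle s, V\rangle_v = \langle s_i, V_i\rangle_v$ factors through $\add \cO_{Y,v}$ by hypothesis. So $s \in \sIdeal(V)$, and identity of sections on a cover follows from the same property for $\vTiltAlg$.

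There is no serious obstacle here: the whole argument is a formal consequence of the fact that the defining condition is stalkwise. The only minor point worth flagging is that the closure of $\sIdeal(V)$ under addition genuinely requires the full generality of $\add\cO_{Y,v}$ (rather than, say, a single free module), which is why the definition is stated in terms of arbitrary projective summands; this is what makes the direct sum argument go through painlessly.
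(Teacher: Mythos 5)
Your proof is correct and follows essentially the same route as the paper: the subsheaf axioms are checked exactly as in the paper's proof, by noting that the defining condition is stalkwise, so restriction, separation, and gluing are all inherited from $\vTiltAlg$ (with the glued section checked to lie in $\sIdeal$ via germs at points of the cover). The only difference is that you also spell out the two-sided ideal verification (direct-sum trick for addition, pre-/post-composition for the ideal property), which the paper simply asserts as part of Definition~\ref{defin sheaf of ideals}; this is a harmless and reasonable addition.
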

\begin{proof}
We just need to prove that $\sIdeal$ is a subsheaf.
For opens $U\subseteq V$ of $Y$, denote the restriction morphisms of $\vTiltAlg$ by $\rho^{\phantom U}_{VU}$. Note that if $s\in\sIdeal(V)$ then since
\[
\langle s,V\rangle_u=\langle \rho^{\phantom U}_{VU}(s),U\rangle_u,
\]
and this factors through a projective for all $u\in U$, it follows that $\rho^{\phantom U}_{VU}$ takes $\sIdeal(V)$ to $\sIdeal(U)$.  The presheaf axioms of $\vTiltAlg$ then imply that $\sIdeal$ is a subpresheaf of $\vTiltAlg$, so it suffices to check the two sheaf axioms.

Suppose then that $U=\bigcup U_i$ and $s\in\sIdeal(U)$ is such that $s|_{U_i}=0$ for all $i$.  Viewing this in $\vTiltAlg$, it follows that $s=0$.

Lastly, suppose that $U=\bigcup U_i$ and there is a collection $s_i\in\sIdeal(U_i)\subseteq\vTiltAlg(U_i)$ such that
\[
s_i|_{U_i\cap U_j}=s_j|_{U_i\cap U_j}
\]
for all $i,j$. Then since $\vTiltAlg$ is a sheaf certainly there exists $s\in\vTiltAlg(U)$ such that $s|_{U_i}=s_i$ for all $i$.  We claim that $s\in\sIdeal(U)$, that is  $\langle s,U\rangle_u$ factors through $\add\cO_{Y,u}$ for all $u\in U$. Thus let $u\in U$, then certainly $u\in U_i$ for some $i$.  But then
\[
\langle s,U\rangle_u=\langle s|_{U_i},U_i\rangle_u,
\]
which factors through a projective since $s|_{U_i}=s_i\in\sIdeal(U_i)$.
\end{proof}

We now show that on affine open subsets, $\sIdeal$ takes a particularly nice form.  To simplify notation, for an affine open subset $V=\Spec R$ of $Y$, write $Q$ for the $R$-module corresponding to $\cQ|_{\Spec R}$, so that $\vTiltAlg(\Spec R)\cong\End_R(Q)$.  Choose a surjection $h\colon F\twoheadrightarrow Q$ with $F$ a free $R$-module, and write
\[
\Hom_R(Q,F)\xrightarrow{\alpha:=(h\circ)}\Hom_R(Q,Q).
\]
It is standard that this localises to
\[
\Hom_{R_\p}\!(Q_\p,F_\p)\xrightarrow{\alpha_\p=(h_\p\circ)}\Hom_{R_\p}\!(Q_\p,Q_\p).
\]

By the defining property of projective modules, any morphism from an object in $P\in \add R$ to $Q$ has to factor as
\[
\begin{tikzpicture}
\node (A) at (0,0) {$F$};
\node (B2) at (-1,0) {$P$};
\node (B3) at (1,0) {$Q,$};
\draw[->>] (A) -- node[above] {\scriptsize $h$} (B3);
\draw[densely dotted,->] (B2) -- (A);
\end{tikzpicture}
\]
and any morphism from an object of $P\in \add R_\p$ to $Q_\p$ has to factor as
\[
\begin{tikzpicture}
\node (A) at (0,0) {$F_\p$};
\node (B2) at (-1,0) {$P$};
\node (B3) at (1.2,0) {$Q_\p.$};
\draw[->>] (A) -- node[above] {\scriptsize $h_\p$} (B3);
\draw[densely dotted,->] (B2) -- (A);
\end{tikzpicture}
\]
Hence
\begin{equation}
\{ g\colon Q\to Q\mid g\mbox{ factors through }\add R\}=\Im(\alpha),\label{approx loc 2}
\end{equation}
and
\begin{align}
\sIdeal(\Spec R)
&=\{g\colon Q\to Q\mid g_\p\mbox{ factors through }\add R_\p\mbox{ for all }\p\in\Spec R\}\nonumber\\
&=\{g\colon Q\to Q\mid g_\p\in \Im(\alpha_\p)\mbox{ for all }\p\in\Spec R\}\label{approx loc}.
\end{align}
The following lemma says that a morphism factors stalk-locally if and only if it factors affine-locally.
\begin{lemma}\label{affine locally it is ok}
If $\Spec R$ is an affine open subset of $Y$, then
\[
\sIdeal(\Spec R)=\{ g\colon Q\to Q\mid g\mbox{ factors through }\add R\}.
\]
\end{lemma}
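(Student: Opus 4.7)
The inclusion $\supseteq$ is immediate: given the formula \eqref{approx loc 2}, any element $g$ of the right-hand side lies in $\Im(\alpha)$, so its localisation $g_\p$ lies in $\Im(\alpha_\p)$ for every prime $\p$, and hence $g \in \sIdeal(\Spec R)$ by~\eqref{approx loc}.

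For the nontrivial inclusion $\subseteq$, combining \eqref{approx loc 2} and \eqref{approx loc} shows that it suffices to prove the following: if $g \colon Q \to Q$ satisfies $g_\p \in \Im(\alpha_\p)$ for every prime $\p$ of $R$, then $g \in \Im(\alpha)$. The plan is to show that localisation commutes with the formation of $\Im(\alpha)$, and then apply a standard local-to-global argument on the cokernel.

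First I would verify that $Q$ is a finitely presented $R$-module. Since $\vlocGen$ is a vector bundle on $X$ and $f$ is projective, $f_*\vlocGen$ is coherent on $Y$, so $Q$ is finitely generated over~$R$; as $R$ is Noetherian, it is then finitely presented. Consequently $\Hom_R(Q, \placeholder)$ commutes with localisation, so the localised map $\alpha_\p$ agrees with $(\alpha)_\p$, and therefore $\Im(\alpha)_\p = \Im(\alpha_\p)$ and $\Cok(\alpha)_\p = \Cok(\alpha_\p)$.

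Under this identification, the hypothesis that $g_\p$ lies in $\Im(\alpha_\p)$ for every $\p$ means that the class of $g$ in $\Cok(\alpha)$ localises to zero at every prime of $R$. By the standard fact that an element of any $R$-module vanishing at all primes is itself zero, we conclude that the class of $g$ in $\Cok(\alpha)$ is zero, i.e.\ $g \in \Im(\alpha)$, as required. The main obstacle is the verification that $\Hom_R(Q, \placeholder)$ commutes with localisation, which is precisely why finite presentation of $Q$ (and thus coherence of $f_*\vlocGen$, ultimately a consequence of projectivity of $f$) is essential here.
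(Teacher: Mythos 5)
Your proof is correct and follows essentially the same route as the paper: both reduce via \eqref{approx loc 2} and \eqref{approx loc} to showing $g\in\Im\alpha$ iff $g_\p\in\Im\alpha_\p$ for all $\p$, and settle the nontrivial direction by passing to the cokernel $\End_R(Q)/\Im\alpha$ and using that an element vanishing at all localisations is zero. The only difference is that you spell out the compatibility of $\Hom_R(Q,\placeholder)$ with localisation (via finite presentation of $Q$), which the paper simply labels as standard before stating the lemma.
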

\begin{proof}
By \eqref{approx loc 2} and \eqref{approx loc}, it suffices to prove that \[g\in \Im\alpha\Longleftrightarrow g_\p\in\Im\alpha_\p\]
for all $\p\in\Spec R$. The direction ($\Rightarrow$) is clear. For the ($\Leftarrow$) direction, suppose that $g\in\Im(\alpha_\p)$ for all $\p\in\Spec R$, and consider $g+\Im\alpha\in \End_R(Q)/\Im\alpha$.  Then  stalk-locally this element is zero, hence it is zero (see e.g.\ \cite[Lemma~2.8(a)]{EisenbudBook}), and thus $g\in\Im\alpha$. 
\end{proof}

It follows that we can define $\sIdeal$ in various equivalent ways, without referring to stalks.
\begin{cor}\label{3 ways to def}
Suppose that $V$ is an open subset of $Y$, and $s\in\vTiltAlg(V)$.  Then the following conditions are equivalent.
\begin{enumerate}
\item $s\in\sIdeal(V).$
\item There exists an open affine cover $V=\bigcup V_i$ such that $s|_{V_i}$ factors through $\add \cO_{V_i}$ for all $i$.
\item For every open affine cover $V=\bigcup V_i$,  $s|_{V_i}$ factors through $\add \cO_{V_i}$ for all $i$.
\end{enumerate}
\end{cor}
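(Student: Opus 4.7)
The proof is a direct corollary of the preceding Lemma~\ref{affine locally it is ok}, together with the sheaf property of $\sIdeal$ established in Proposition~\ref{is sheaf of ideals}. The plan is to cycle through the implications $(3) \Rightarrow (2) \Rightarrow (1) \Rightarrow (3)$.

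First, $(3) \Rightarrow (2)$ is immediate since $Y$ is a quasi-projective variety and hence admits at least one open affine cover of $V$.

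Next, for $(2) \Rightarrow (1)$: suppose there is an open affine cover $V = \bigcup V_i$ with each $s|_{V_i}$ factoring through $\add \cO_{V_i}$. Then Lemma~\ref{affine locally it is ok}, applied to each affine open $V_i$, gives that $s|_{V_i} \in \sIdeal(V_i)$. Since $\sIdeal$ is a subsheaf of $\vTiltAlg$ by Proposition~\ref{is sheaf of ideals}, the sections $s|_{V_i}$ glue (they agree on overlaps as restrictions of $s$), and hence $s \in \sIdeal(V)$.

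Finally, for $(1) \Rightarrow (3)$: let $V = \bigcup V_i$ be an arbitrary open affine cover. Since $\sIdeal$ is a sheaf, the restriction map $\vTiltAlg(V) \to \vTiltAlg(V_i)$ sends $\sIdeal(V)$ into $\sIdeal(V_i)$, so $s|_{V_i} \in \sIdeal(V_i)$ for each $i$. Applying Lemma~\ref{affine locally it is ok} once more gives that each $s|_{V_i}$ factors through $\add \cO_{V_i}$.

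There is no real obstacle here; the content has already been done in Lemma~\ref{affine locally it is ok}, which reconciles the stalk-local factoring condition built into the definition of $\sIdeal$ with the affine-local factoring condition. The corollary just packages these reformulations into a single statement by invoking the sheaf property to move between affine pieces and the global open~$V$.
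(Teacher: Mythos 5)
Your proof is correct and follows essentially the same route as the paper: the implication from membership in $\sIdeal(V)$ to affine-local factoring uses the subsheaf property from \ref{is sheaf of ideals} together with \ref{affine locally it is ok}, the reverse implication is handled by \ref{affine locally it is ok} plus gluing (the paper leaves this step as "clear", and your gluing argument is a valid way to fill it in), and the remaining implication is trivial. No gaps.
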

\begin{proof}
Since $\sIdeal$ is a sheaf by \ref{is sheaf of ideals}, if $s\in\sIdeal(V)$ then $s|_{V_i}\in\sIdeal(V_i)$.  Hence (1)$\Rightarrow$(3)  follows from \ref{affine locally it is ok}. (3)$\Rightarrow$(2) is obvious, and (2)$\Rightarrow$(1) is clear.
\end{proof}

\subsection{Sheafy Contraction Algebras}\label{sheafy contract alg section}
In this subsection we continue to work under the general assumptions of \ref{key assumptions}, but consider the dual bundle $\locGen:=\vlocGen^*$, and $\sTiltAlg:=f_*\sEnd_X(\locGen)$.  Although not strictly necessary (up to taking opposite algebras, we show in the proof of \ref{affine is nice} below that it gives the same objects), taking the dual is convenient since in the setting \ref{assumptions do hold}\eqref{assumptions do hold 2} of fibre dimension at most one, it will allow us to relate the above construction to our previous work, and deformation theory, in an easier way.

By \ref{is sheaf of ideals}, $\sIdeal$ is a sheaf of two-sided ideals of $\vTiltAlg:=f_*\sEnd_X(\vlocGen)$.  Since $\sTiltAlg\cong\vTiltAlg^{\op}$, we can also view $\sIdeal$ as a subsheaf of two-sided ideals of $\sTiltAlg$.  Thus we can consider the presheaf quotient of $\sTiltAlg$ by $\sIdeal$, which is naturally a presheaf of algebras, and hence its sheafification $\sDefAlg:=\sTiltAlg/\sIdeal$ is a sheaf of algebras on $Y$.

\begin{defin}\label{defin sheaf of deformation alg}
We call $\sDefAlg:=\sTiltAlg/\sIdeal$ the sheaf of contraction algebras on $Y$.
\end{defin}

Note that there is a natural $\cO_Y$-algebra structure on $\sDefAlg$  given by a morphism of sheaves of rings $\cO_Y \to \sTiltAlg$, as in \cite[\S18.1]{KS}, however this morphism is not injective, as a consequence of \ref{global cont thm} below.  Locally, the sections of $\sDefAlg$ have a particularly easy form, which we now describe.  For an affine neighbourhood $V$ in $Y$, consider the following Zariski local setup obtained by base change.

\begin{setup}\label{setupZariski}
(Zariski local) Suppose that $f\colon U\to V=\Spec R$  is a projective birational morphism between normal varieties over $\K$, with $\Rf_*\cO_U=\cO_V$.
\end{setup}

We set $\Lambda_V:=\End_U(\locGen|_U)$.  The following is an extension of \cite[\S 3.1]{DW2}, which only considered contractions with fibre dimension at most one.  

\begin{defin}\label{contraction algebra def}
Write $I_{\con}$ for the two-sided ideal of $\Lambda_V$ consisting of those morphisms $\varphi\colon \locGen|_U\to\locGen|_U$ which factor through an object $\cF\in\add \cO_U$, as follows.
\[
\begin{tikzpicture}
\node (A) at (0,0) {$\locGen|_U$};
\node (B2) at (1.5,-0.75) {$\cF$};
\node (C) at (3,0) {$\locGen|_U$};
\draw[densely dotted,->] (A) -- (B2);
\draw[densely dotted,->] (B2) -- (C);
\draw[->] (A) -- node[above] {\scriptsize $\varphi$} (C);
\end{tikzpicture}
\]
Then the \emph{contraction algebra} for~$\Lambda_V$ is defined to be $(\Lambda_V)_{\con}:=\Lambda_V/I_{\con}$.
\end{defin}

The following proposition is important, and will be heavily used later.
\begin{prop}\label{affine is nice}
If $V=\Spec R$ is an affine open subset of $Y$, then $\sDefAlg(V)\cong(\Lambda_{V})_{\con}$. 
\end{prop}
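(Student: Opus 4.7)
The plan is to reduce, on the affine open $V = \Spec R$ with $U = f^{-1}(V)$, the sheafy quotient $\sDefAlg(V)$ to the explicit quotient $(\Lambda_V)_{\con}$ by three steps: identifying the ambient algebra, identifying the ideal, and controlling the sheafification.

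First I would identify $\sTiltAlg(V) \cong \Lambda_V$. By Assumption \ref{key assumptions}(\ref{key assumptions 1}) the natural map $f_*\sEnd_X(\vlocGen) \to \sEnd_Y(f_*\vlocGen)$ is an isomorphism, so $\vTiltAlg(V) \cong \End_V(f_*\vlocGen|_V) \cong \End_U(\vlocGen|_U)$, where the second isomorphism is standard (or one can appeal to the version of \ref{easy tilt lemma} applied to the dual). Under the duality $\locGen = \vlocGen^*$ and the identification $\sTiltAlg \cong \vTiltAlg^{\op}$, this becomes $\sTiltAlg(V) \cong \End_U(\locGen|_U) = \Lambda_V$.

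Next I would identify the ideal $\sIdeal(V) \subseteq \sTiltAlg(V) \cong \Lambda_V$ with $I_{\con}$. By \ref{affine locally it is ok}, the sections $\sIdeal(V)$ are exactly the morphisms $\cQ \to \cQ$ which factor through $\add R$. A morphism $\varphi\colon \vlocGen|_U \to \vlocGen|_U$ factors through an object of $\add \cO_U$ if and only if its dual $\varphi^*\colon \locGen|_U \to \locGen|_U$ does, because $\cO_U$ is self-dual and $\add$-closed. Under the anti-isomorphism $\sTiltAlg(V) \cong \vTiltAlg(V)^{\op}$, this precisely matches the definition of $I_{\con}$ from \ref{contraction algebra def}.

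Finally, I would show the sheafification is harmless on $V$. Since $\sDefAlg$ is the sheafification of the presheaf $\sTiltAlg/\sIdeal$, the natural map $\sTiltAlg(V)/\sIdeal(V) \to \sDefAlg(V)$ is injective (both $\sTiltAlg$ and $\sIdeal$ being sheaves), so I only need surjectivity. For this I would verify that $\sIdeal|_V$ is quasi-coherent, corresponding to the $R$-submodule $I_{\con} \subseteq \Lambda_V$. On a distinguished open $D(g) \subseteq V$, Corollary~\ref{3 ways to def} gives $\sIdeal(D(g)) = \{\text{morphisms } Q_g \to Q_g \text{ factoring through } \add R_g\}$, and then the description $I_{\con} = \Im\alpha$ from \eqref{approx loc 2} together with finite presentation of $Q$ (so that $\Hom_R(Q,-)$ commutes with localization) yields $\sIdeal(D(g)) = (I_{\con})_g$. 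Hence $\sIdeal|_V$ is quasi-coherent, and since $V$ is affine the short exact sequence $0 \to \sIdeal \to \sTiltAlg \to \sDefAlg \to 0$ on $V$ remains exact on global sections, delivering $\sDefAlg(V) \cong \Lambda_V / I_{\con} = (\Lambda_V)_{\con}$.

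The main obstacle is the third step: carefully checking that the stalk-wise factoring condition localizes to distinguished opens as expected, so that $\sIdeal$ is genuinely quasi-coherent. The first two steps are essentially bookkeeping once \ref{affine locally it is ok} is in hand.
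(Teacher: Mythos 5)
Your proposal is correct and follows essentially the same route as the paper: identify $\sIdeal(V)$ via \ref{affine locally it is ok}, transfer across \ref{key assumptions}\eqref{key assumptions 1} and the duality $\locGen=\vlocGen^*$ (handling the opposite-ring bookkeeping), and observe that sheafification does not change sections over the affine $V$. The only difference is that the paper simply asserts the exactness of $0\to\sIdeal(V)\to\sTiltAlg(V)\to\sDefAlg(V)\to 0$ on affines, whereas you justify it by checking quasi-coherence of $\sIdeal|_V$ via localization of $\Im\alpha$ --- a correct and slightly more careful treatment of that step.
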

\begin{proof}
For all affine open subsets $V=\Spec R$ of $Y$
\[
0\to\sIdeal(V)\to\sTiltAlg(V)\to \sDefAlg(V)\to 0
\]
is exact. It follows that, for $U:=f^{-1}(V)$, 
\begin{align*}
\sDefAlg(V)
=\sTiltAlg(V)/\sIdeal(V)
&=\big(\vTiltAlg(V)/\sIdeal(V)\big)^{\op}\\
&\cong
\big(\!\End_R(f_*\vlocGen|_U)/[R]\big)^{\op}\tag{by \eqref{up=down eq}, \ref{affine locally it is ok}}\\
&=\big(\!\End_U(\vlocGen|_U)/[\cO_U]\big)^{\op}\tag{by \ref{key assumptions}\eqref{key assumptions 1}}
\end{align*}
which is $\End_U(\locGen|_U)/[\cO_U]=(\Lambda_V)_{\con}$.
\end{proof}

A benefit of our global construction of $\sDefAlg$, which we will make use of later, is that the contraction theorem in \cite{DW2} can be globalised.  Although \cite[4.4--4.7]{DW2} was stated in the one-dimensional fibre setting, the proof works word-for-word in the more general setup here.

\begin{cor}[Global Contraction Theorem]\label{global cont thm}
$\Supp_Y\sDefAlg=\nonIso$.
\end{cor}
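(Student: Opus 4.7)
The plan is to reduce to the Zariski-local setting via Proposition \ref{affine is nice}, and then check at each point $y \in Y$ whether $\sDefAlg_y = 0$. On any affine open $V = \Spec R \subseteq Y$, that proposition identifies $\sDefAlg(V)$ with the contraction algebra $(\Lambda_V)_{\con}$ of the Zariski-local setup \ref{setupZariski}, so $\sDefAlg_y \neq 0$ exactly when $((\Lambda_V)_{\con})_y \neq 0$. The statement then follows from the two inclusions $\Supp_Y \sDefAlg \subseteq \nonIso$ and $\nonIso \subseteq \Supp_Y \sDefAlg$.

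For the inclusion $\Supp_Y \sDefAlg \subseteq \nonIso$, pick $y \notin \nonIso$. Since the isomorphism locus of $f$ is open, $y$ admits an affine open neighbourhood $V = \Spec R$ with $V \cap \nonIso = \emptyset$, over which $f$ restricts to an isomorphism $U := f^{-1}(V) \xrightarrow{\sim} V$. Then $\locGen|_U$ corresponds to a finitely generated projective $R$-module, and hence lies in $\add \cO_U$. Consequently $\id_{\locGen|_U} \in I_{\con}$, forcing $(\Lambda_V)_{\con} = 0$, so by \ref{affine is nice} $\sDefAlg|_V = 0$ and in particular $\sDefAlg_y = 0$.

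For the reverse inclusion $\nonIso \subseteq \Supp_Y \sDefAlg$, we invoke the Zariski-local Contraction Theorem of \cite[4.4--4.7]{DW2}. As the authors explicitly claim, the proof there extends verbatim to our more general setting, since it only invokes inputs that remain available here: the tilting equivalence of \ref{key assumptions}, restricted to an affine open $V$ containing $y \in \nonIso$ via \ref{easy tilt lemma}, the identification $\sDefAlg(V) \cong (\Lambda_V)_{\con}$ from \ref{affine is nice}, and the production of a non-zero object on $U$ supported above $y$ whose image under the tilting equivalence is a non-zero $\Lambda_V$-module killed by $I_{\con}$. The only real obstacle is to verify that no step in \cite[4.4--4.7]{DW2} secretly relies on one-dimensionality of fibres; once this sanity check is carried out, one obtains a non-zero quotient of $(\Lambda_V)_{\con}$ supported at $y$, so $((\Lambda_V)_{\con})_y \neq 0$ and hence $y \in \Supp_Y \sDefAlg$.
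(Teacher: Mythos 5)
Your proposal is correct and follows essentially the same route as the paper: both reduce to affine opens via \ref{affine is nice} and invoke the contraction theorem of \cite[4.4--4.7]{DW2}, whose proof (as the paper notes) does not use the one-dimensional fibre hypothesis, so that $\Supp_V(\Lambda_V)_{\con}=\nonIso\cap V$ on each affine $V$. Your only deviation is proving the easy inclusion $\Supp_Y\sDefAlg\subseteq\nonIso$ directly (via $\locGen|_U\in\add\cO_U$ over the isomorphism locus, forcing $(\Lambda_V)_{\con}=0$), which is a harmless and correct shortcut rather than a genuinely different argument.
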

\begin{proof}
By \cite[4.7]{DW2}, which does not require the one-dimensional fibre assumption, we see that $\Supp_V(\Lambda_V)_{\con}=\nonIso\cap V$ for all affine opens $V$ in $Y$.   Since $\Supp_V(\Lambda_V)_{\con}=\Supp_V\sDefAlg|_V$ by \ref{affine is nice}, the result follows.
\end{proof}

\begin{remark}\label{nc enhancement remark}
By construction $\sDefAlg$ is a sheaf of algebras on $Y$.  By \ref{global cont thm} it is naturally a sheaf of algebras on $\nonIso$, and thus we can view the ringed space $(\nonIso,\sDefAlg)$ as a noncommutative enhancement of the locus $\nonIso$.  Note further that $\coh(Y,\sDefAlg)=\coh(\nonIso,\sDefAlg)$.
\end{remark}

\begin{example}\label{ex D4 to A2}
Consider $R:=\mathbb{C}[x,y,z,t]/(x^2t + y^3 - z^2)$ and $Y=\Spec R$. This is singular along $x=y=z=0$, and has a crepant resolution sketched below, obtained by blowing up the ideal $(x,y,z)$.
\[
\begin{array}{c}
\begin{tikzpicture}
%%%
\node at (0,0) {\begin{tikzpicture}[scale=0.6]
\coordinate (T) at (1.9,2);
\coordinate (TM) at (2.12-0.05,1.5-0.1);
\coordinate (BM) at (2.12-0.09,1.5+0.1);
\coordinate (B) at (2.1,1);
\draw[red,line width=1pt] (T) to [bend left=25] (B);
\foreach \y in {0.2,0.4,...,1}{ 
\draw[line width=0.5pt,red] ($(T)+(\y,0)+(0.02,0)$) to [bend left=25] ($(TM)+(\y,0)+(0.02,0)$);
\draw[line width=0.5pt,red] ($(BM)+(\y,0)+(0.02,0)$) to [bend left=25] ($(B)+(\y,0)+(0.02,0)$);
\draw[line width=0.5pt,red] ($(T)+(-\y,0)+(-0.02,0)$) to [bend left=25] ($(TM)+(-\y,0)+(-0.02,0)$);
\draw[line width=0.5pt,red] ($(BM)+(-\y,0)+(-0.02,0)$) to [bend left=25] ($(B)+(-\y,0)+(-0.02,0)$);;}
\draw[color=blue!60!black,rounded corners=5pt,line width=0.75pt] (0.5,0) -- (1.5,0.3)-- (3.6,0) -- (4.3,1.5)-- (4,3.2) -- (2.5,2.7) -- (0.2,3) -- (-0.2,2)-- cycle;
\end{tikzpicture}};
%%%
\node at (0,-2) {\begin{tikzpicture}[scale=0.6]
\draw [red] (1.1,0.75) -- (3.1,0.75);
\draw[color=blue!60!black,rounded corners=5pt,line width=0.75pt] (0.5,0) -- (1.5,0.15)-- (3.6,0) -- (4.3,0.75)-- (4,1.6) -- (2.5,1.35) -- (0.2,1.5) -- (-0.2,1)-- cycle;
\end{tikzpicture}};
%%%
\draw[->, color=blue!60!black] (0,-1) -- (0,-1.5);
%%%LABELS
\node at (-2,0) {$X$};
\node at (-2,-2) {$Y$};
\end{tikzpicture}
\end{array}
\]
In this example $X$ is derived equivalent to $\Lambda:=\End_R(R\oplus M)$, where $M$ is the cokernel of the following $4\times 4$ matrix.
\begin{equation}
R^4
\xrightarrow{
\left(
\begin{array}{cccc}
x &z&-y&0\\
z &xt&0&y\\
y^2&0&xt&z\\
0&y^2&-z&-x
\end{array}
\right)}
R^4\label{MF in ex}
\end{equation}

We now calculate the stalks of the sheaf $\sDefAlg$ at the closed points of $\nonIso$.   It is clear that, away from the origin, complete locally $R$ is given by the equation $x^2 + y^3 - z^2$. This is the $A_2$ surface singularity crossed with the affine line, and thus the generic fibre is just the minimal resolution of $A_2$ crossed with the affine line.  Since $M$ has rank two, it follows that for all points of $\nonIso$ away from the origin, the completion of the stalk of $\sDefAlg$ is given by the completion of the following quiver with relations.
\[
\begin{array}{cccc}
\begin{array}{c}
\begin{tikzpicture} [bend angle=45, looseness=1]
\node (C1) at (0,0) [vertex] {};
\node (C2) at (1.5,0) [vertex] {};
\node (C1a) at (-0.1,0)  {};
\node (C2a) at (1.6,0) {};
\draw [->,bend left=20,looseness=1] ($(C1)+(20:5pt)$) to node[gap]  {$\scriptstyle a$} ($(C2)+(160:5pt)$);
\draw [->,bend left=20] ($(C2)+(-160:5pt)$) to node[gap]  {$\scriptstyle b$} ($(C1)+(-20:5pt)$);
\draw[->]  (C1a) edge [in=150,out=220,loop,looseness=10,pos=0.5] node[left] {$\scriptstyle s$} (C1a);
\draw[->]  (C2a) edge [in=30,out=-40,loop,looseness=10,pos=0.5] node[right] {$\scriptstyle t$} (C2a);
\end{tikzpicture}
\end{array}
&&
{\small \begin{array}{l}
a\circ s=t\circ a\\
b\circ t=s\circ b\\
a\circ b=0\\
b\circ a=0
\end{array}}
\end{array}
\]

On the other hand, the origin is a $cD_4$ point, and calculating the stalk of $\sDefAlg$ there is a little trickier.  Using the matrix in \eqref{MF in ex}, it is easy to first present $\Lambda$ as a quiver with relations, as in \cite[\S 4]{AM}, then calculate the contraction algebra, as in \cite[1.3]{DW1}.  Doing this, one finds that  the completion of $\sDefAlg$ at the origin is the completion of 
\[
\frac{\mathbb{C}\langle a,b\rangle}{ab+ba,a^2}
\]
at the ideal $(a,b)$.
\end{example}

By \ref{nc enhancement remark} there is a noncommutative ringed space $(\nonIso,\sDefAlg)$, and it is natural to compare this to the usual  commutative ringed space $(\nonIso,\cO_{\nonIso})$, where $\cO_{\nonIso}$ is the structure sheaf that endows $\nonIso$ with its reduced subscheme structure.  A key property of $(\nonIso,\cO_{\nonIso})$ is that for every point $z\in\nonIso$, the stalk $\cO_{\nonIso,z}$ is local, that is, it admits only one simple module.  This is not true in general for the stalk $\sDefAlg_z$.  However, in the crepant setting of \ref{assumptions do hold}\eqref{assumptions do hold 1}, the following holds. 

\begin{prop}\label{loc D and hyper}
Suppose the crepant setting of \ref{assumptions do hold}(\ref{assumptions do hold 1}), let $z\in\nonIso$ be a closed point, and furthermore assume that $d=3$ and $X$ is smooth.  If $\widehat{\sDefAlg}_{z}$ is local, then $\widehat{\cO}_{Y,z}$ is a hypersurface.
\end{prop}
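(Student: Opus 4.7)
The strategy is to pass to the complete local situation at $z$, translate the locality hypothesis into a constraint on the indecomposable summand structure of the underlying tilting module, and then invoke a classification result for Gorenstein $3$-folds admitting an NCCR with exactly one non-free indecomposable summand.

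Choose an affine open $V=\Spec R$ containing $z$, set $U=f^{-1}(V)$, and write $M:=f_*\vlocGen|_U$. By \ref{easy tilt lemma}, $\vlocGen|_U$ is tilting on $U$; combined with \ref{key assumptions}\eqref{key assumptions 1} and \ref{affine is nice} this yields
\[
\sDefAlg(V)\cong\End_R(M)^{\op}/[R].
\]
Writing $\widehat R:=\widehat\cO_{Y,z}$ and $\widehat M:=M\otimes_R\widehat R$ after completion gives $\widehat\sDefAlg_z\cong\End_{\widehat R}(\widehat M)^{\op}/[\widehat R]$. The crepancy argument from the proof of \ref{assumptions do hold}\eqref{assumptions do hold 1} shows that $\widehat M$ is maximal Cohen--Macaulay over $\widehat R$, and the tilting equivalence together with the smoothness of $X$ forces $\End_{\widehat R}(\widehat M)$ to have finite global dimension, so this algebra is an NCCR of the Gorenstein $3$-fold $\widehat R$.

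Next, apply Krull--Schmidt over the complete local ring $\widehat R$ to write $\widehat M$ in basic form as $\widehat R\oplus N_1\oplus\cdots\oplus N_k$, with the $N_j$ pairwise non-isomorphic indecomposable non-free MCM modules. Under Morita equivalence $\widehat\sDefAlg_z$ corresponds to the basic algebra $\End_{\widehat R}(\widehat R\oplus N_1\oplus\cdots\oplus N_k)^{\op}/[\widehat R]$, whose primitive orthogonal idempotents are indexed by $N_1,\ldots,N_k$. Since locality is Morita-invariant and detects the number of simples, the hypothesis forces $k=1$, and so $\widehat M$ has basic form $\widehat R\oplus N$ for a single indecomposable non-free MCM module $N$.

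The remaining task, and the main obstacle, is to deduce that the complete local Gorenstein normal $3$-fold $\widehat R$ is a hypersurface, given that it admits an NCCR $\End_{\widehat R}(\widehat R\oplus N)$ with $N$ indecomposable non-free MCM. The idea is to combine the NCCR property with the $d=3$ and Gorenstein hypotheses so that $\Omega_{\widehat R} N$ lies in $\add(\widehat R\oplus N)$, and then use the indecomposability of $N$ together with the fact that $\underline\Omega$ is an autoequivalence of $\uCM\widehat R$ to pin down the periodicity of the minimal free resolution of $N$; a rank/length analysis then yields a matrix factorisation for $N$, whence Eisenbud's theorem identifies $\widehat R$ as a quotient of a regular local ring by a single equation. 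In the isolated case this reduces to Van den Bergh's identification of $3$-fold NCCR singularities with cDV (hence hypersurface) singularities, while the non-isolated case (as in Example \ref{ex D4 to A2}) is handled by localising along the $1$-dimensional singular locus and applying a family version of the same classification (cf.~\cite{IW4, VdB2}). This step is the only place where the $d=3$ hypothesis is essential.
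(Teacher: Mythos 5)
Your reduction to the complete local statement is essentially the paper's: locality of $\widehat{\sDefAlg}_z$ forces the non-free part of the completed module to be a single indecomposable summand (occurring with multiplicity one -- note, though, that your justification ``locality is Morita-invariant'' is false as stated, e.g.\ $2\times 2$ matrices over $\K[\![x]\!]$ are Morita equivalent to a local ring but not local; what you actually need, and what does hold, is that a local ring has no nontrivial idempotents), and smoothness of $X$ together with crepancy makes $\End_{\widehat{R}}(\widehat{R}\oplus N)$ an NCCR of the complete local Gorenstein $3$-fold. The genuine gap is in the remaining step, which is where all the content of the proposition lies. First, your key intermediate claim that the NCCR property forces $\Omega N\in\add(\widehat{R}\oplus N)$ is asserted without argument and is not a consequence of finiteness of global dimension; the paper instead invokes the involutivity of mutation at an \emph{indecomposable} summand in dimension three \cite[6.25(3)]{IW4}, which yields $\Omega^2N\cong N$, a weaker and genuinely different statement. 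Second, even granting a periodic resolution, your concluding move ``rank/length analysis gives a matrix factorisation, whence Eisenbud's theorem gives a hypersurface'' runs Eisenbud's theorem backwards: the matrix-factorisation correspondence presupposes that the ring is a hypersurface, and periodic modules exist over non-hypersurface Gorenstein rings (e.g.\ $\K[\![x,y]\!]/(x^2,y^2)$), so periodicity of one module cannot by itself identify $\widehat{R}$ as a hypersurface. The missing ingredient is precisely how the paper finishes: $\Omega^2N\cong N$ bounds the complexity of $N$ by one, and then finite global dimension of $\End_{\widehat{R}}(\widehat{R}\oplus N)$ (i.e.\ smoothness of $X$) propagates this bound to every module, in particular to the residue field, so that $\widehat{R}$ is a complete intersection of codimension at most one; this is the argument in the proof of \cite[6.14]{HomMMP} that the paper cites.

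Your proposed fallback, ``Van den Bergh's identification of $3$-fold NCCR singularities with cDV (hence hypersurface) singularities,'' is not a theorem and cannot rescue the step: Gorenstein $3$-folds admitting NCCRs need not be cDV or hypersurfaces (the quotient singularity $\mathbb{C}^3/\tfrac{1}{3}(1,1,1)$ has an NCCR via the skew group algebra but is not a hypersurface); what saves the day is exactly the single-indecomposable-summand hypothesis, fed through the mutation-involution argument above, and the sketched ``family version'' for the non-isolated locus has no source. So the overall architecture (complete locally: local contraction algebra $\Rightarrow$ one non-free indecomposable summand $\Rightarrow$ hypersurface) is right, but the implication from one summand to hypersurface is not established by your argument and needs the mutation and complexity results used in the paper.
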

\begin{proof}
Put $\mathfrak{R}:=\widehat{\cO}_{Y,z}$, and write $\widehat{\sDefAlg}_z=\End_{\mathfrak{R}}(\mathfrak{R}\oplus M)$.  Since by assumption $\widehat{\sDefAlg}_{z}$ is local, necessarily $M$ must be indecomposable.  Then by \cite[6.25(3)]{IW4} mutation at the indecomposable summand $M$ must be an involution, and so $\Omega^2M\cong M$.  But this implies that the complexity of $M$ must be less than or equal to one, which easily (see for example the proof of \cite[6.14]{HomMMP}) implies that $\mathfrak{R}$ is a hypersurface.
\end{proof}

\begin{remark}\label{motivate hyper assump}
The converse to \ref{loc D and hyper} is false, since if $R$ is complete locally a hypersurface, $\widehat{\sDefAlg}_{z}$ need not be local.   Nevertheless, being the only situation with smooth $X$ where a local $\widehat{\sDefAlg}_{z}$ is possible, the situation where $Y$ is complete locally a hypersurface has special status.  It is also the only situation where our $\sDefAlg$ can possibly coincide with Toda's noncommutative thickening \cite{Toda thickening}. This partially motivates the hypersurface assumptions in \S\ref{Section brutal} below.
\end{remark}

\subsection{$\sDefAlg$ and Deformations}\label{subsection deformations}
In the setting \ref{assumptions do hold}\eqref{assumptions do hold 2} when $f$ has fibres of dimension at most one, in this subsection we briefly relate $\sDefAlg$ to noncommutative deformation theory.  For this, recall that an \emph{$n$-pointed $\K$-algebra} $\Gamma$ is an associative $\K$-algebra equipped with $\K$-algebra morphisms
\[
\K^n \overset{i}{\to} \Gamma \overset{p}{\to} \K^n
\]
which compose to the identity. Note that the morphisms $p$ and $i$ allow us to lift the canonical idempotents $\{e_1, \dots, e_n\}$ of $\K^n$ to $\Gamma$. We refer to $\Gamma$ as \emph{artinian} if it is finite-dimensional as a vector space over $\K$, and the ideal $\Ker p$ is nilpotent. Such algebras naturally form a category $\art_n$, and we write $\proart_n$ for the category of pro-artinian algebras (see for instance \cite[\S 2]{DW2} for the precise construction). 

Recall that a DG category is a graded category $\mathsf{A}$ whose morphism spaces have the structure of DG vector spaces. We write $\DG_n$ for the category which has as objects the DG categories with precisely $n$ objects. Given a DG category $\mathsf{A} \in \DG_n$ and an algebra $\Gamma \in \art_n$, we now recall an appropriate notion of deformations over $\Gamma$, according to the standard Maurer--Cartan formulation. Writing $\n = \Ker p \subset \Gamma$, first consider the DG category $\mathsf{A} \uotimes \n \in \DG_n$ with objects $\{ 1, \dots, n \}$, morphisms 
\[
\Hom_{\mathsf{A} \uotimes \n}(i,j) := \Hom_{\mathsf{A}}(i,j) \otimes_{\K} (e_j \n e_i),
\]
and differential induced from $\mathsf{A}$. (Note that the convention used here for notating compositions in $\Gamma$ is opposite to that of \cite{DW2}.) Given a degree-$1$ morphism $\xi \in (\mathsf{A} \uotimes \n)^1$ consider the Maurer--Cartan equation 
\[
\operatorname{MC}(\xi) := d\xi + \tfrac{1}{2}[\xi,\xi] \in (\mathsf{A} \uotimes \n)^2 = 0
\]
where $d$ is the differential, and the bracket is induced from the commutator brackets on $\mathsf{A}$ and $\n$.

\begin{defin} Given $\mathsf{A} \in \DG_n$, the associated \emph{deformation functor} is given by
\begin{align*}
\Def^{\mathsf{A}} \colon \art_n & \to \Sets \\
\Gamma & \mapsto \left\{ \xi \in (\mathsf{A} \uotimes \n)^1 \,|\, \operatorname{MC}(\xi) = 0 \right\} / \sim
\end{align*}
where the standard gauge equivalence relation $\sim$ is given in e.g.\ \cite[2.6]{DW2}.
\end{defin}

Given objects $E_1, \dots, E_n \in \coh(X)$, and injective resolutions $E_i \to I^i_\bullet$, then 
\[
\mathsf{A} := \End^{\DG}\left(\bigoplus I^i_\bullet\right)
\]
naturally lies in $\DG_n$, and the associated deformation functor $\Def^{\mathsf{A}}$ describes the simultaneous noncommutative deformations of the collection $\{E_i\}$.

\begin{defin} A functor $F\colon \art_n \to \Sets$ is said to be \emph{prorepresentable} by $\Gamma \in \proart_n$ if the restriction of $\Hom_{\proart_n}\!(\Gamma, \placeholder)$ to $\art_n$ is naturally isomorphic to $F$.
\end{defin}

 Given a contraction $f$ with at most one-dimensional fibres as in \ref{assumptions do hold}\eqref{assumptions do hold 2}, for every $z\in\nonIso$,  $C=f^{-1}(z)$ is a curve, and recall that we write $C^{\redu} = \bigcup_{i=1}^n C_i$ where each $C_i \cong \mathbb{P}^1$. We put $E_i = \cO_{C_i}(-1)$ and form $\mathsf{A}$ as above.  

The following theorem was shown in \cite[1.1]{DW2}.

\begin{thm}
In the setting \ref{assumptions do hold}(\ref{assumptions do hold 2}), for each closed point $z \in \nonIso$ the functor of noncommutative deformations $\Def^{\mathsf{A}}$ is represented by an algebra $\CAz$.
\end{thm}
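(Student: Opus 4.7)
The plan is to invoke the main result of \cite{DW2} directly, but let me sketch the underlying strategy, since it informs later sections. The statement is essentially local around $z$, so the first move is to pass to the formal completion. Because $C = f^{-1}(z)$ is proper over $\Spec \K$, infinitesimal (noncommutative) deformations of its structure sheaf and of any sheaves set-theoretically supported on $C$ are controlled by the formal fibre $\widehat{X}_z := X \times_Y \Spec\widehat{\cO}_{Y,z}$. In particular, up to natural isomorphism the functor $\Def^{\mathsf{A}}$ depends only on this formal neighbourhood, so one may replace $Y$ by $\Spec\widehat{\cO}_{Y,z}$ throughout.

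Next, I would transport the deformation problem to the algebraic side via tilting. By \ref{easy tilt lemma} and \ref{assumptions do hold}\eqref{assumptions do hold 2}, the bundle $\locGen = \vlocGen^\dual$ is tilting on $\widehat{X}_z$, producing a derived equivalence $\Db(\coh\widehat{X}_z) \simeq \Db(\modCat\widehat{\Lambda}_z)$, where $\widehat{\Lambda}_z := \End_{\widehat{X}_z}(\locGen|_{\widehat{X}_z})$. Under Van den Bergh's construction of $\locGen$, the sheaves $E_i = \cO_{C_i}(-1)$ correspond precisely to the simple $\widehat{\Lambda}_z$-modules $S_i$ attached to the non-trivial vertices (i.e., those other than the vertex for the $\cO_X$-summand). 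Since the tilting equivalence is induced by a DG functor between DG enhancements, it identifies the DG endomorphism algebra $\mathsf{A}$ with $\End^{\DG}(\bigoplus J_i)$ for injective resolutions $S_i \to J_i$ in $\modCat\widehat{\Lambda}_z$, and therefore identifies the Maurer--Cartan deformation functors.

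It now suffices to prove that noncommutative deformations of a finite collection of simple modules over a complete, Noetherian, basic $\K$-algebra are prorepresentable, and to identify the prorepresenting algebra. Prorepresentability follows from Laudal's version of Schlessinger's criteria: finite-dimensionality of the tangent spaces $\Ext^1_{\widehat{\Lambda}_z}(S_i,S_j)$ is guaranteed since $\widehat{\Lambda}_z$ is module-finite over $\widehat{\cO}_{Y,z}$ (here one uses \ref{key assumptions}\eqref{key assumptions 1}), while the standard cocycle/obstruction calculus in $\Ext^2$ supplies the Schlessinger-type gluing. The resulting prorepresenting object is $n$-pointed, complete, and augments over $\K^n$. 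Setting $\CAz$ to be this algebra then establishes the theorem.

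The hard part is the identification of $\CAz$ intrinsically, which is carried out in \cite{DW2} and is not required for the bare prorepresentability statement above; the content is that the prorepresenting algebra coincides with the quotient of $\widehat{\Lambda}_z$ by the ideal of morphisms factoring through $\add\cO_{\widehat{X}_z}$, matching \ref{contraction algebra def}. This identification is what makes the subsequent comparison with the global $\sDefAlg$ via \ref{affine is nice} meaningful, and is the only subtle input: all other steps are formal consequences of tilting plus Laudal's prorepresentability theorem.
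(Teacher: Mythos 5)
The paper itself offers no argument for this statement: it is quoted directly from \cite[1.1]{DW2}, so your main move --- invoking \cite{DW2} --- is exactly what the paper does, and your sketch (complete at $z$, transport the problem across the tilting equivalence induced by $\locGen$, under which the $\cO_{C_i}(-1)$ become simple modules, then prorepresent noncommutative deformations of a finite simple collection over a complete module-finite algebra) is broadly the strategy of that reference.

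However, your closing identification is misstated, and in a way the paper is at pains to avoid. The prorepresenting algebra $\CAz$ is \emph{not} the quotient of $\widehat{\Lambda}_z$ by the ideal of morphisms factoring through $\add\cO_{\widehat{X}_z}$; it is the analogous quotient $\AB/[\mathfrak{R}]$ of the multiplicity-free algebra $\AB=\End_{\mathfrak{R}}(\mathfrak{R}\oplus K)$, where $K=M_1\oplus\cdots\oplus M_n$ retains one copy of each indecomposable summand. The quotient $\widehat{\Lambda}_z/[\cO]$ is only morita equivalent to $\CAz$, which is precisely the content of \ref{B gives def locally 2} and the point emphasised in the introduction. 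Concretely, in \ref{D4 to A1 ex} the completion of the stalk of $\sDefAlg$ at a generic closed point of $\nonIso$ is the ring of $2\times 2$ matrices over $\K[\![t]\!]$: this cannot prorepresent the ($1$-pointed) deformation functor of the single curve there, since it is not an object of $\proart_1$ (its semisimple quotient is $M_2(\K)$, not $\K$), whereas the prorepresenting algebra is $\K[\![t]\!]$. The bare prorepresentability statement you are proving is unaffected, since you delegate the identification to \cite{DW2} anyway, but as written your last paragraph asserts an equality that is false whenever some multiplicity $a_i>1$, and getting this wrong would break the later comparison with the global $\sDefAlg$ via \ref{affine is nice}, where the morita (rather than isomorphism) relationship is exactly the subtlety the paper is tracking.
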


The precise form of $\CAz$ is not important for now, but we do explain this in \S\ref{setting in Section 3}.  Recall from \S\ref{sheafy contract alg section} that for an affine open $V$ of $Y$, and $U:=f^{-1}(V)$, we write $\Lambda_V:=\End_U(\vlocGen^*|_U)$.  It was shown in \cite[\citetype{Theorem~}1.1, \citetype{Theorem~}1.2]{DW2} that the completion of $\sDefAlg|_V=(\Lambda_V)_{\con}$ at a closed point $z\in \nonIso\cap V$ is morita equivalent to $\CAz$.  The following is then an immediate corollary of \ref{affine is nice}.

\begin{cor}\label{B gives def locally 2} 
In the setting \ref{assumptions do hold}(\ref{assumptions do hold 2}), let $z\in Z$ be a closed point. Then the completion of the stalk $\sDefAlg_z$ is morita equivalent to $\CAz$. 
\end{cor}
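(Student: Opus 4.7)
The plan is to reduce the statement, via the affine local computation \ref{affine is nice}, to the Zariski-local contraction algebra $(\Lambda_V)_{\con}$, and then invoke the morita equivalence already established in \cite{DW2}.

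First, since $z \in Z$ is a closed point and $Y$ is quasi-projective, choose an affine open neighbourhood $V = \Spec R \subseteq Y$ of $z$, and set $U := f^{-1}(V)$. The restricted contraction $f\colon U \to V$ satisfies the hypotheses of Setup \ref{setupZariski}, and moreover the fibre-dimension hypothesis in \ref{assumptions do hold}\eqref{assumptions do hold 2} passes to $U \to V$, so the local bundle $\vlocGen|_U$ is the one from Van den Bergh's construction, and we may form $\Lambda_V = \End_U(\locGen|_U)$ together with its contraction algebra $(\Lambda_V)_{\con}$.

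Next, I would apply Proposition \ref{affine is nice} to identify $\sDefAlg(V) \cong (\Lambda_V)_{\con}$ as $R$-algebras. Since $\sDefAlg$ is a sheaf on $Y$ and $V$ is affine open, the stalk $\sDefAlg_z$ agrees with the localisation $(\Lambda_V)_{\con} \otimes_R R_{\m_z}$ at the maximal ideal $\m_z \subset R$ corresponding to $z$. Completing then gives
\[
\widehat{\sDefAlg}_z \;\cong\; \widehat{(\Lambda_V)_{\con}},
\]
where the right-hand side denotes the $\m_z$-adic completion.

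Finally, I would quote the results of \cite{DW2} referenced in the excerpt (Theorems~1.1 and~1.2), which in the one-dimensional fibre setting give exactly that the $\m_z$-adic completion of $(\Lambda_V)_{\con}$ is morita equivalent to $\CAz$, the prorepresenting object for noncommutative deformations of the reduced fibre above $z$. Chaining this with the preceding isomorphism yields the desired morita equivalence of $\widehat{\sDefAlg}_z$ with $\CAz$. There is essentially no obstacle here: the only thing to check carefully is that the notions of ``completion at a closed point'' agree on both sides, which follows from the fact that taking stalks of a sheaf of finitely generated algebras at a closed point, and then completing, coincides with completing the sections over any affine neighbourhood at the corresponding maximal ideal.
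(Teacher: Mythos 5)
Your proposal is correct and follows essentially the same route as the paper: identify $\sDefAlg(V)\cong(\Lambda_V)_{\con}$ via \ref{affine is nice} on an affine neighbourhood of $z$, then invoke \cite[Theorems 1.1, 1.2]{DW2} for the morita equivalence of the completion with $\CAz$. The paper treats the passage from affine sections to the completed stalk as immediate, and your extra remark that completing the stalk agrees with the $\m_z$-adic completion of the sections is the same (harmless) bookkeeping point.
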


The morita equivalence above is illustrated in the following example.

\begin{example}\label{D4 to A1 ex}
Consider $R:=\mathbb{C}[x,y,z,t]/(x^3-xyt-y^3+z^2)$ and $Y=\Spec R$, which is singular along the $t$-axis.  It has a crepant resolution sketched as follows.
\[
\begin{array}{c}
\begin{tikzpicture}
%%%
\node at (0,0) {\begin{tikzpicture}[scale=0.6]
\coordinate (T) at (1.9,2);
\coordinate (B) at (2.1,1);
\draw[red,line width=1pt] (T) to [bend left=25] (B);
\foreach \y in {0.2,0.4,...,1}{ 
\draw[line width=0.5pt,red] ($(T)+(\y,0)$) to [bend left=25] ($(B)+(\y,0)$);
\draw[line width=0.5pt,red] ($(T)+(-\y,0)$) to [bend left=25] ($(B)+(-\y,0)$);;}
\draw[color=blue!60!black,rounded corners=6pt,line width=0.75pt] (0.5,0) -- (1.5,0.3)-- (3.6,0) -- (4.3,1.5)-- (4,3.2) -- (2.5,2.7) -- (0.2,3) -- (-0.2,2)-- cycle;
\end{tikzpicture}};
%%%
\node at (0,-2) {\begin{tikzpicture}[scale=0.6]
\draw [red] (1.1,0.75) -- (3.1,0.75);
\draw[color=blue!60!black,rounded corners=6pt,line width=0.75pt] (0.5,0) -- (1.5,0.15)-- (3.6,0) -- (4.3,0.75)-- (4,1.6) -- (2.5,1.35) -- (0.2,1.5) -- (-0.2,1)-- cycle;
\end{tikzpicture}};
%%%
\draw[->, color=blue!60!black] (0,-1) -- (0,-1.5);
%%%LABELS
\node at (-2,0) {$X$};
\node at (-2,-2) {$Y$};
\end{tikzpicture}
\end{array}
\]
In this example $X$ is derived equivalent to $\Lambda=\End_R(R\oplus M)$, where $M$ is the cokernel of the following $4\times 4$ matrix.
\[
R^4
\xrightarrow{
\left(
\begin{array}{cccc}
x & z& -y& 0\\
0&y^2&-z&x\\
-z&x^2-yt&0&y\\
-y^2&0&x^2-yt&z
\end{array}
\right)}
R^4
\]

Away from the origin, the singular locus is just the $A_1$ surface singularity crossed with the affine line, so since $M$ has rank two, complete locally away from the origin it must split into two isomorphic copies of the same rank-one CM module $L$, so that
\[
\widehat{\Lambda}\cong\End_{\mathfrak{R}}(\mathfrak{R}\oplus L^{\oplus 2}).
\]
Hence, away from the origin, the completion of the stalk of $\sDefAlg$ is the $2\times 2$ matrices over $\mathbb{C}[t]$, since $\mathbb{C}[t]$ is the contraction algebra of $\End_{\mathfrak{R}}(\mathfrak{R}\oplus L)$.

At the origin, complete locally $M$ is indecomposable, so that the stalk of $\sDefAlg$ at the origin must be a local algebra.  Using the same method as in \ref{ex D4 to A2}, it is not difficult to show that the completion of the stalk is isomorphic to the completion of $\mathbb{C}\langle a,b\rangle/(a^2,b^2)$ at the ideal $(a,b)$.
\end{example}

\begin{remark}\label{not Toda}
This remark explains why our thickening is different to the one constructed by Toda \cite{Toda thickening}.  In the above example, a generic exceptional fibre consists of a single smooth projective curve. Take its structure sheaf and consider its Hilbert polynomial to obtain a moduli space of simple sheaves.  All generic fibres have this Hilbert polynomial (by flatness), and the moduli space has a point for each one of these, plus a point corresponding to the origin, to give a pure dimension one scheme.  The completion of Toda's sheaf $\sDefAlg'$ at each point abelianizes to give the completion of the stalk of the structure sheaf of the moduli space at that point.  In particular, since the moduli space has pure dimension one, this stalk cannot be finite dimensional.  But $\sDefAlg$ abelianizes at the central point to give $\mathbb{C}[[a,b]]/(a^2,b^2)$, which is finite dimensional, and hence the sheaves $\sDefAlg$ and $\sDefAlg'$ are different.  
\end{remark}

\section{A Universal Sheaf $\twistOb$}
In this section we use $\sDefAlg$ from \ref{defin sheaf of deformation alg} to construct a complex of sheaves $\twistOb$ on $X$, and present some of its basic properties.  In the case of fibre dimension at most one, \ref{assumptions do hold}\eqref{assumptions do hold 2}, we will show that $\twistOb$ is a sheaf, with support equal to the exceptional locus.

\subsection{General Construction of $\twistOb$}\label{general cE construction section}
In this subsection we work under the general assumptions of \ref{key assumptions}.  By definition, \ref{defin sheaf of deformation alg}, $\sDefAlg$ is a sheaf on $Y$.  However, in the setting \ref{assumptions do hold}\eqref{assumptions do hold 2}, consider $\locGen:=\vlocGen^*$, which induces an equivalence
\begin{equation}
\begin{array}{c}
\begin{tikzpicture}
\node (d1) at (-3,0) {$\Db(\coh X)$};
\node (e1) at (3,0) {$\Db(\coh (Y, \sTiltAlg)).$};
%equiv and functors
\draw[->] (d1) to  node[above] {$\scriptstyle \RDerived{f}_* \RsHom_X(\locGen,\placeholder) $} node[below] {$\scriptstyle \sim$}(e1);
\end{tikzpicture}
\end{array}\label{global Db us}
\end{equation}
The sheaf of algebras $\sDefAlg:=\sTiltAlg/\sIdeal$ constructed in \ref{defin sheaf of deformation alg} is in particular an $\sTiltAlg$-module, and thus an object of $\Db(\coh (Y,\sTiltAlg))$.  
\begin{notation}\label{cE def}
We write $\twistOb$ for the object in $\Db(\coh X)$ which corresponds to $\sDefAlg$ across the equivalence \eqref{global Db us}.
\end{notation}

\begin{prop}\label{derived chase}
Under the general assumptions of \ref{key assumptions}, for an affine open $V=\Spec R\subseteq Y$, set $U=f^{-1}(V)$ and write $f'\colon U\to V$ for the restricted map.  Then the following hold.
\begin{enumerate}
\item\label{derived chase 1} $\RDerived f'_*(\twistOb|_U)=0$.
\item\label{derived chase 2}  $\Rf_*\twistOb=0$.
\end{enumerate}
In particular, $\Supp_X\twistOb$ is contained in the exceptional locus.
\end{prop}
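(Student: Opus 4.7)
The plan is to prove~(1) by passing to the local derived equivalence on an affine open and reducing the vanishing to a short idempotent calculation; then~(2) follows by patching over an affine cover, and the support statement by restricting to open sets on which $f$ is an isomorphism.

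First I would fix an affine open $V = \Spec R \subseteq Y$, set $U := f^{-1}(V)$ and $\Lambda := \End_U(\locGen|_U)$. By a dual version of~\ref{easy tilt lemma} applied to $\locGen = \vlocGen^*$, the restriction of~\eqref{global Db us} to~$V$ is the tilting equivalence $\Phi := \RHom_U(\locGen|_U, -)\colon \Db(\coh U) \xrightarrow{\sim} \Db(\mod \Lambda)$. Under~$\Phi$, Notation~\ref{cE def} and \ref{affine is nice} identify $\Phi(\twistOb|_U) \cong \sDefAlg(V) \cong \Lambda/[\cO_U]$. Writing $\locGen|_U = \cO_U \oplus \vlocGen_0^*|_U$, let $e \in \Lambda$ be the idempotent projecting onto the $\cO_U$-summand; then $\Phi(\cO_U) \cong \Hom_U(\locGen|_U,\cO_U) \cong e\Lambda$ as right $\Lambda$-modules, and this module is projective.

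The desired vanishing then reduces to a direct computation:
\[
\RDerived{f'}_*(\twistOb|_U) \;\cong\; \RHom_U(\cO_U,\twistOb|_U) \;\cong\; \Hom_\Lambda\bigl(e\Lambda,\, \Lambda/[\cO_U]\bigr) \;\cong\; (\Lambda/[\cO_U])\cdot e,
\]
using projectivity of $e\Lambda$ to drop the derived functor, together with the standard identification $\Hom_\Lambda(e\Lambda, M) \cong Me$. It remains to identify $[\cO_U]$ inside $\Lambda$. A morphism $\locGen|_U \to \locGen|_U$ factors through $\add\cO_U$ iff it is a sum of compositions $\locGen|_U \to \cO_U \to \locGen|_U$, and under the identifications $\Hom(\cO_U,\locGen|_U) \cong \Lambda e$ and $\Hom(\locGen|_U,\cO_U) \cong e\Lambda$ these are exactly the elements of $\Lambda e \cdot e\Lambda = \Lambda e\Lambda$. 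Since $\Lambda e \subseteq \Lambda e\Lambda$, right-multiplication by~$e$ annihilates $\Lambda/\Lambda e\Lambda$, so $(\Lambda/[\cO_U])\cdot e = 0$, completing~(1).

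Part~(2) follows at once: the sheaf $\RDerived{f}_*\twistOb$ on $Y$ has zero restriction to every affine open by~(1), hence vanishes. For the support statement, if $x \in X$ is not in the exceptional locus then $f(x) \in Y\setminus\nonIso$, and I can choose an affine open $V \subseteq Y\setminus\nonIso$ with $f(x) \in V$ on which $f'\colon U \to V$ is an isomorphism; then $\RDerived{f'}_*$ is an equivalence, so vanishing of $\RDerived{f'}_*(\twistOb|_U)$ forces $\twistOb|_U = 0$, and in particular $\twistOb_x = 0$. The main obstacle is purely one of bookkeeping --- correctly identifying $\Phi(\cO_U)$ with the right $\Lambda$-module $e\Lambda$ and verifying $[\cO_U] = \Lambda e\Lambda$ --- and once that is done no nontrivial derived-categorical difficulty arises, since $e\Lambda$ is projective and kills any higher Ext contribution.
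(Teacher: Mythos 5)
Your proof is correct and takes essentially the same route as the paper: restrict to the affine-local tilting equivalence, identify $\twistOb|_U$ with $(\Lambda_V)_{\con}=\Lambda/[\cO_U]$, and conclude from the fact that right multiplication by the idempotent $e$ (which realises $\RDerived f'_*$) annihilates $\Lambda/\Lambda e\Lambda$, then deduce (2) by restricting to an affine cover. The only differences are cosmetic: where the paper cites the commuting square identifying $\RDerived f'_*$ with $(\placeholder)e$, you verify the needed instance directly via $\RDerived f'_*\cong\RHom_U(\cO_U,\placeholder)$ and $\Phi(\cO_U)\cong e\Lambda$, and you also write out the support argument that the paper leaves implicit.
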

\begin{proof} 
(1) There is a commutative diagram as follows.
\begin{equation}
\begin{array}{c}
\begin{tikzpicture}
\node (roof) at (0,0) {$\Db(\coh X)$};
\node (X) at (5,0) {$\Db(\coh(Y,\sTiltAlg))$};
\node (X') at (0,-1.5) {$\Db(\coh U )$};
\node (base) at (5,-1.5) {$\Db(\mod \Lambda_V)$};
\draw[->] (roof) -- node[above] {$\scriptstyle \RDerived{f}_* \RsHom_X(\locGen,-)$} node[below] {$\scriptstyle \sim$} (X);
\draw[->] (roof) --  node[right] {$\scriptstyle |_U$}  (X');
\draw[->] (X) --  node[right] {$\scriptstyle |_V$} (base);
\draw[->] (X') -- node[above] {$\scriptstyle \RHom_U(\locGenZar,-)$} node[below] {$\scriptstyle \sim$}  (base);
\end{tikzpicture}
\end{array}\label{global Db us local}
\end{equation}
Since $\twistOb$ corresponds to $\sDefAlg$ on the top equivalence, it follows that $\twistOb|_U$ corresponds to $\sDefAlg(V)$ across the bottom equivalence.  Hence by \ref{affine is nice} $\twistOb|_U$ corresponds to $(\Lambda_V)_{\con}$.
 
Let $e$ denote the idempotent in $\Lambda_V$ corresponding to $R$. Then the following diagram commutes. 
\begin{equation}
\begin{array}{c}
\begin{tikzpicture}
\node (roof) at (0,0) {$\Db(\coh U)$};
\node (X) at (5,0) {$\Db(\mod\Lambda_V)$};
\node (X') at (0,-1.5) {$\Db(\coh \Spec R)$};
\node (base) at (5,-1.5) {$\Db(\mod R)$};
\draw[->] (roof) -- node[above] {$\scriptstyle \RHom_U(\locGenZar,-)$} node[below] {$\scriptstyle \sim$} (X);
\draw[->] (roof) --  node[right] {$\scriptstyle \RDerived f'_*$}  (X');
\draw[->] (X) --  node[right] {$\scriptstyle (-)e$} (base);
\draw[-,transform canvas={yshift=+0.15ex}] (X') -- (base);
\draw[-,transform canvas={yshift=-0.15ex}] (X') -- (base);
\end{tikzpicture}
\end{array}\label{local comm push}
\end{equation}
Since $(\Lambda_V)_{\con}\in\mod\Lambda_V$ satisfies $(\Lambda_V)_{\con}e=0$, it follows that $\twistOb|_U$ satisfies $\RDerived f'_*(\twistOb|_U)=0$.  \\
(2) Since  $\mathbf{R}^i f_*\twistOb|_V=\mathbf{R}^i f'_*(\twistOb|_{U})$ by flat base change (see e.g.\ \cite[III.8.2]{Hartshorne}), it follows from \eqref{derived chase 1} that $\Rf_*\twistOb=0$. 
\end{proof}

It may be the case that $\Supp_X\twistOb$ always equals the exceptional locus, but this seems tricky to prove without more control over where the simple $(\Lambda_V)_{\con}$-modules go under the derived equivalence.   Controlling the support of $\twistOb$ is important, since later it will give an easy-to-check obstruction to $\sDefAlg$ being relatively spherical.

\subsection{The Sheaf $\twistOb$ and Fibre Dimension One}\label{sect univ sheaf} 
This subsection considers the setting of \ref{assumptions do hold}\eqref{assumptions do hold 2} when $f$ has fibres of dimension at most one, and proves that $\twistOb$ is a sheaf whose support $\Supp_X\twistOb$ equals the exceptional locus.

In the setting of \ref{assumptions do hold}\eqref{assumptions do hold 2}, recall from~\cite[\S 3]{Bridgeland} and~\cite[\S 3.1]{VdB1d} that perverse coherent sheaves on $X$ may be defined as follows.

\begin{defin}
The category $\PerOne(X,Y)$, respectively $\Per(X,Y)$, consists of those objects  $a\in\Db(\coh X)$ such that 
\begin{enumerate}
\item $H^i(a)=0$ if\, $i\neq 0,-1$,
\item $f_*H^{-1}(a)=0$, $\Rfi{1}_* H^0(a)=0$,
\item $\Hom_X(H^0(a),c)=0$, respectively $\Hom_X(c,H^{-1}(a))=0$, for all $c\in \cC^0$
\end{enumerate}
where $\cC:=\ker \Rf_*$ and $\cC^0$ denotes the full subcategory of $\cC$ whose objects have cohomology only in degree $0$.
\end{defin}

It follows from \cite[\citetype{Proposition~}3.3.1, proof of \citetype{Proposition~}3.3.2]{VdB1d} that the $\vlocGen$ in \ref{assumptions do hold}\eqref{assumptions do hold 2} is a local progenerator of $\PerOne(X/Y)$, and $\locGen=\vlocGen^*$ likewise for $\Per(X/Y)$.

To show that  $\twistOb$ is a sheaf requires the following lemma, which  is well known.

\begin{lemma}\label{Per lemma track}
Suppose that $a\in\Db(\coh X)$ such that $\Rf_* a=0$.
\begin{enumerate}
\item\label{Per lemma track 1} $a\in\PerOne(X,Y)$ if and only if $a$ is concentrated in degree $-1$.
\item\label{Per lemma track 2} $a\in\Per(X,Y)$ if and only if $a$ is concentrated in degree $0$.
\end{enumerate}
\end{lemma}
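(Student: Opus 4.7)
The plan is to reduce both parts to the standard truncation triangle
\[
H^{-1}(a)[1]\to a\to H^0(a)\to
\]
in $\Db(\coh X)$, together with the crucial input from \ref{assumptions do hold}\eqref{assumptions do hold 2} that, with fibres of dimension at most one, $\Rf_*$ has cohomological amplitude $[0,1]$; equivalently $\Rfi{i}_* = 0$ for $i\geq 2$.

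For the $(\Rightarrow)$ direction of \eqref{Per lemma track 1}, suppose $a\in\PerOne(X,Y)$ with $\Rf_* a=0$. Condition (1) in the definition of $\PerOne(X,Y)$ forces $a$ to live in cohomological degrees $0$ and $-1$, so the truncation triangle above is the relevant one. Applying $\Rf_*$ and using $\Rf_* a=0$ rotates to the isomorphism
\[
\Rf_* H^0(a)\cong \Rf_* H^{-1}(a)[2].
\]
The left-hand side is concentrated in degrees $[0,1]$, while the right-hand side is concentrated in degrees $[-2,-1]$, and so both must vanish. In particular $H^0(a)\in\cC^0$, and feeding $c=H^0(a)$ into condition (3) of $\PerOne(X,Y)$ yields $\End_X(H^0(a))=0$, which forces $H^0(a)=0$. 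Hence $a$ is concentrated in degree $-1$. The $(\Leftarrow)$ direction is routine: if $a=F[1]$ then $\Rf_* F=0$, so conditions (1)--(3) of $\PerOne(X,Y)$ are immediate, with (3) holding vacuously since $H^0(a)=0$.

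Part \eqref{Per lemma track 2} is entirely dual. The same truncation-triangle manipulation still produces $\Rf_* H^{-1}(a)=0$, which now places $H^{-1}(a)$ in $\cC^0$. Testing condition (3) of $\Per(X,Y)$ against $c=H^{-1}(a)$ then gives $\End_X(H^{-1}(a))=0$, forcing $H^{-1}(a)=0$ and so $a$ concentrated in degree $0$; the reverse direction is again a direct axiom check.

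I do not anticipate a genuine obstacle: the whole argument is standard derived-category bookkeeping, and the only essential ingredient beyond the definitions is the cohomological amplitude bound for $\Rf_*$ supplied by the fibre dimension at most one hypothesis.
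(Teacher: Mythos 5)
Your argument is correct and is essentially the paper's proof: the key step in both is to show that the relevant cohomology sheaf ($H^0(a)$ for $\PerOne$, $H^{-1}(a)$ for $\Per$) lies in $\cC^0$ and then feed it into condition (3) of the perversity definition against itself, killing it via $\End_X=0$. The only difference is that you derive the vanishing $\Rf_*H^{0}(a)=\Rf_*H^{-1}(a)=0$ directly from the truncation triangle and the amplitude bound $[0,1]$, whereas the paper cites Bridgeland's Lemma~3.1 for the same fact — a purely cosmetic distinction.
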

\begin{proof}
(1) ($\Rightarrow$) By definition $H^i(a)=0$ unless $i=-1\text{ or }0$, so we need only show $H^0(a)=0$. As $f$ has at most one-dimensional fibres, $\Rf_* a=0$ implies that $\Rf_* H^0(a)=0$ using \cite[\citetype{Lemma~}3.1]{Bridgeland}. But  $\Hom_X(H^0(a),c)=0$ for all sheaves $c$ such that $\Rf_* c=0$, in particular for $c=H^0(a)$, and the claim follows.\\
($\Leftarrow$) This follows immediately from the definition.\\
(2) The proof is similar.
\end{proof}

\begin{cor}\label{sheaf in deg 0}
In the setting of \ref{assumptions do hold}(\ref{assumptions do hold 2}), the following statements hold.
\begin{enumerate}
\item $\twistOb|_U\cong
(\Lambda_V)_{\con}\otimes^{}_{\Lambda_V}\locGenZar$.
\item $\twistOb$ is a sheaf in degree zero.
\end{enumerate}
\end{cor}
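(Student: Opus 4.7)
The strategy is to prove part (2) first, then deduce part (1) as a consequence.

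My plan is to work locally on an affine open $V = \Spec R \subseteq Y$, with $U = f^{-1}(V)$. By the commutative diagram \eqref{global Db us local} together with \ref{affine is nice}, the restriction $\twistOb|_U$ is the image of $(\Lambda_V)_{\con}$ under a quasi-inverse of $\RHom_U(\locGenZar, \placeholder)$. Since $\locGenZar$ is a tilting bundle on $U$ by \ref{easy tilt lemma}, the derived quasi-inverse is $\placeholder \Lotimes{\Lambda_V} \locGenZar$, so on the nose one obtains
\[
\twistOb|_U \;\cong\; (\Lambda_V)_{\con} \Lotimes{\Lambda_V} \locGenZar
\]
inside $\Db(\coh U)$. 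The goal is to upgrade this derived tensor product to an underived one, and this is the main work.

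The key step is to place $\twistOb$ in the perverse heart. By \cite[Prop.~3.3.1, proof of Prop.~3.3.2]{VdB1d} (quoted just after the definition of $\Per(X,Y)$ in the excerpt), $\locGen = \vlocGen^*$ is a local progenerator of $\Per(X/Y)$. The derived equivalence $\RHom_U(\locGenZar,\placeholder)$ therefore restricts to an equivalence of abelian categories between $\Per(U/V)$ and $\mod\Lambda_V$. Since $(\Lambda_V)_{\con}$ is a genuine $\Lambda_V$-module concentrated in degree zero, it follows that $\twistOb|_U$ lies in $\Per(U/V)$. As this holds for every affine $V$, and the perverse t-structure is local on $Y$, we conclude $\twistOb \in \Per(X,Y)$.

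Now I invoke \ref{derived chase}\eqref{derived chase 2}, which states $\Rf_*\twistOb = 0$. Combining this with $\twistOb \in \Per(X,Y)$ and \ref{Per lemma track}\eqref{Per lemma track 2}, it follows that $\twistOb$ is concentrated in degree zero, proving (2). For part (1), the derived tensor product $(\Lambda_V)_{\con} \Lotimes{\Lambda_V} \locGenZar$ is then cohomologically concentrated in degree zero as well, so the higher Tors vanish and the derived tensor product agrees with the underived one, yielding the stated isomorphism. I expect the only subtlety is the routine verification that the tilting equivalence identifies the perverse heart $\Per(U/V)$ with $\mod\Lambda_V$ in the way required; everything else is a direct consequence of the results already proved.
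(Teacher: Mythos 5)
Your proposal is correct and follows essentially the same route as the paper: identify $\twistOb|_U$ with $(\Lambda_V)_{\con}$ via \eqref{global Db us local} and \ref{affine is nice}, use that $\locGen$ is a progenerator of the perverse heart to place $\twistOb|_U$ in $\Per(U,V)$, and then combine \ref{derived chase} with \ref{Per lemma track}\eqref{Per lemma track 2} to get concentration in degree zero. The only cosmetic differences are that you prove (2) before (1) (a sensible way to justify the underived tensor in (1)) and that you globalise membership in the perverse heart before applying \ref{Per lemma track}, whereas the paper applies it directly to $\Per(U,V)$; neither affects correctness.
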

\begin{proof}
(1) This follows from the proof of~\ref{derived chase}, in particular~\eqref{global Db us local}.\\
(2) This is shown using \ref{derived chase} and \ref{Per lemma track}\eqref{Per lemma track 2} applied to $\Per(U,V)$.
\end{proof}

For a closed point $z\in \nonIso$, consider an affine neighbourhood $V=\Spec R$ of $z$, and consider $U:=f^{-1}(V)$ and $f|_U\colon U\to V$.  The fibre above each closed point in~$\nonIso$, with reduced scheme structure, decomposes into curves $C_i$ say, each isomorphic to $\mathbb{P}^1$.

\begin{prop}\label{supp closed pts}
In the setting of \ref{assumptions do hold}(\ref{assumptions do hold 2}), if $x$ is a closed point in the exceptional locus of~$f|_U$, then $x\in \Supp_U \twistOb|_U$.  
\end{prop}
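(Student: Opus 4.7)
The strategy is to exhibit a nonzero morphism of coherent sheaves $\twistOb|_U \to \cO_C(-1)$, where $C\cong\mathbb{P}^1$ is an irreducible component of the reduced fibre $f^{-1}(z)^{\redu}$ through $x$, with $z:=f(x)\in \nonIso$. Given such a morphism, its image is a nonzero coherent subsheaf of the line bundle $\cO_C(-1)$ on the integral curve $C$, hence torsion-free of rank one, hence has support equal to all of $C$. This gives $x\in C\subseteq\Supp_U\twistOb|_U$, as desired.

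To construct the morphism, I would translate across the derived equivalence in~\eqref{global Db us local}. By~\ref{sheaf in deg 0}, $\twistOb|_U$ is concentrated in degree zero and corresponds to $(\Lambda_V)_{\con}$ as a right $\Lambda_V$-module. The sheaf $\cO_C(-1)$ is a simple object of the perverse heart $\Per(U/V)$ introduced above, and since $\locGen|_U$ is a local progenerator of $\Per(U/V)$ in the sense of Van den Bergh~\cite[\S3]{VdB1d}, it corresponds under the equivalence to a simple right $\Lambda_V$-module $S_C$ at the vertex $e_C$ indexed by the summand of $\locGen_0$ picking out $C$. In particular $e_C\neq e_0$, where $e_0$ is the idempotent for the $\cO_U$-summand. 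It therefore suffices to exhibit a nonzero element of $\Hom_{\Lambda_V}((\Lambda_V)_{\con},S_C)$.

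Unwinding~\ref{contraction algebra def} and~\ref{affine locally it is ok} in idempotent terms gives $I_{\con}=\Lambda_V e_0\Lambda_V$. Using $S_C=S_C\cdot e_C$ and $e_Ce_0=0$, one has $S_C\cdot\Lambda_V e_0\subseteq S_C\cdot e_0=0$, so $S_C\cdot I_{\con}=0$. Hence $S_C$ descends to a simple right $(\Lambda_V)_{\con}$-module, and in particular appears as a quotient of $(\Lambda_V)_{\con}$, producing the required nonzero morphism.

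\textbf{Main obstacle.} The delicate step is the identification of $\cO_C(-1)$ with a simple $\Lambda_V$-module $S_C$ at a vertex distinct from $e_0$, which relies on Van den Bergh's perverse t-structure theory. Once this is established, the idempotent calculation and the subsheaf argument on $\mathbb{P}^1$ are both formal.
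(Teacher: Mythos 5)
Your proposal is correct in substance and shares the paper's skeleton: transport $\cO_C(-1)$ across the equivalence \eqref{global Db us local}, observe that the resulting $\Lambda_V$-module is killed by $I_{\con}$, produce a nonzero map out of $(\Lambda_V)_{\con}$, and transport back. The two genuine differences are at the start and the end. At the end, the paper uses simplicity of the module (citing \cite[3.5.8]{VdB1d}) to get a surjection $(\Lambda_V)_{\con}\twoheadrightarrow T$, hence a short exact sequence in $\Per(U,V)$ whose kernel is seen to be a sheaf, and then concludes by stalk surjectivity; your torsion-free-image trick on the integral curve $C$ is more elementary, needs only \emph{some} nonzero sheaf morphism $\twistOb|_U\to\cO_C(-1)$ (legitimate, since by \ref{sheaf in deg 0} both objects are sheaves, so derived-category morphisms are sheaf morphisms), and in fact renders the simplicity of $S_C$ unnecessary: any nonzero module annihilated by $I_{\con}$ receives a nonzero map from $(\Lambda_V)_{\con}$. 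At the start, however, your justification that $S_C\cdot I_{\con}=0$ is the weak point: Zariski-locally over $V$ the bundle $\locGen_0|_U$ need not decompose with an indecomposable summand ``picking out $C$'', so there is in general no idempotent $e_C$ with $S_C=S_Ce_C$ and $e_Ce_0=0$ to appeal to --- that vertex picture is a complete-local statement. The repair is one line and is exactly the paper's route: writing $I_{\con}=\Lambda_Ve_0\Lambda_V$ as you do, it suffices that $S_C\cdot e_0=0$, and
\[
S_C\cdot e_0 \;=\; \RHom_U(\cO_U,\cO_C(-1)) \;=\; \RGamma\big(\mathbb{P}^1,\cO_{\mathbb{P}^1}(-1)\big)\;=\;0,
\]
equivalently $\Rf_*\cO_C(-1)=0$, which is what the paper extracts from the commutative diagram \eqref{local comm push}. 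With that substitution your argument is complete and slightly shorter than the paper's.
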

\begin{proof}
Certainly $x$ sits on some $C_i$, so $x\in\Supp \cO_{C_i}(-1)$. Write $T$ for the $\Lambda_V$-module corresponding to $\cO_{C_i}(-1)$ across the bottom equivalence in \eqref{global Db us local}, then by \cite[3.5.8]{VdB1d} $T$~is a simple $\Lambda_V$-module.  But since $\Rf_*\cO_{C_i}(-1)=0$, it follows from \eqref{local comm push} that $T$ is also a $(\Lambda_V)_{\con}$-module, and hence a simple $(\Lambda_V)_{\con}$-module.  As such, there exists a surjection $(\Lambda_V)_{\con}\twoheadrightarrow T$, and so back across the bottom equivalence in \eqref{global Db us local} there is a short exact sequence
\[
0\to \cK\to \twistOb|_U\to\cO_{C_i}(-1)\to 0
\]
in $\Per(U,V)$. Since the last two terms are sheaves, it follows from the long exact sequence in ordinary cohomology that so too is $\cK$.  Hence since passing to stalks is exact, the stalk of $ \twistOb$ at $x$ must be non-zero, since it surjects onto the stalk of $\cO_{C_i}(-1)$ at $x$, which is non-zero.  It follows that $x\in\Supp\twistOb|_U$. 
\end{proof}

\begin{cor}\label{supp equal excp}
In the setting of \ref{assumptions do hold}(\ref{assumptions do hold 2}), $\Supp_X \twistOb$ is equal to the exceptional locus.
\end{cor}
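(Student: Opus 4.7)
The plan is to combine the inclusions already half-established. The inclusion $\Supp_X\twistOb\subseteq E$, where $E$ denotes the exceptional locus of $f$, is recorded at the end of \ref{derived chase}: for any $x\in X\setminus E$, a sufficiently small affine open $V\ni f(x)$ makes $f|_U\colon U=f^{-1}(V)\to V$ an isomorphism, so that \ref{derived chase}\eqref{derived chase 1} together with $\RDerived f'_*$ being an equivalence forces $\twistOb|_U=0$.

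For the reverse inclusion $E\subseteq\Supp_X\twistOb$, the key point is that in the fibre-dimension-one setting $\twistOb$ is an honest coherent sheaf by \ref{sheaf in deg 0}, so its support is a closed subset of $X$. Applying \ref{supp closed pts} across a cover of $Y$ by affine opens $V=\Spec R$, with $U=f^{-1}(V)$, shows that every closed point of the exceptional locus of $f$ lies in $\Supp_X\twistOb$. To finish, observe that since $X$ is a variety over the algebraically closed field $\K$, it is Jacobson, and so closed points of the closed subscheme $E\subseteq X$ are dense in $E$. A closed subset of $X$ containing all closed points of $E$ must therefore contain $E$ itself.

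I do not anticipate any real obstacle: the corollary is a routine density-of-closed-points argument that packages together the two inputs \ref{derived chase} and \ref{supp closed pts}, exploiting that $\twistOb$ is a sheaf in the fibre-dimension-one setting so that its support is automatically closed. The only mildly subtle point worth flagging is that \ref{supp closed pts} is stated locally on an affine $V$, so one passes to a cover to deduce the global statement that every closed point of $E$ lies in $\Supp_X\twistOb$.
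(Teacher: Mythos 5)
Your proposal is correct and follows essentially the same route as the paper: both use \ref{supp closed pts} over an affine cover to capture all closed points of the exceptional locus, \ref{derived chase} for the containment $\Supp_X\twistOb\subseteq$ exceptional locus, and the Jacobson property of the variety $X$ (density of closed points) to pass from closed points to the full equality. Your extra remark that $\twistOb$ is a coherent sheaf by \ref{sheaf in deg 0}, so that its support is closed, is a harmless repackaging of the paper's appeal to prime ideals being intersections of maximal ideals.
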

\begin{proof}
By \ref{supp closed pts}, by varying $z\in \nonIso$ we see that the set of closed points in the support of $\twistOb$ contains the set of closed points in the exceptional locus. Since $\Rf_*\twistOb=0$ by \ref{derived chase}\eqref{derived chase 2}, $\Supp\twistOb$ does not contain any further closed points. Since $X$ is a variety, and so Zariski locally every prime ideal is the intersection of maximal ideals, we are done. 
\end{proof}

We next show that the global $\twistOb$ recovers the universal sheaves from noncommutative deformation theory in the setting of \ref{assumptions do hold}\eqref{assumptions do hold 2}.  For a closed point $z\in\nonIso$, write $\mathfrak{R}_z$ for the completion of the structure sheaf of $Y$ at $z$, and consider the following flat base change diagram.
\[
\begin{array}{c}
\begin{tikzpicture}[yscale=1.25]
\node (Xp) at (-1,0) {$\mathfrak{X}_z$}; 
\node (X) at (1,0) {$X$};
\node (Rp) at (-1,-1) {$\Spec\mathfrak{R}_z$}; 
\node (R) at (1,-1) {$Y$};
\draw[->] (Xp) to node[above] {$\scriptstyle j_z$} (X);
\draw[->] (Rp) to node[above] {$\scriptstyle i_z$} (R);
\draw[->] (Xp) --  node[left] {$\scriptstyle f_z$}  (Rp);
\draw[->] (X) --  node[right] {$\scriptstyle f$}  (R);
\end{tikzpicture}
\end{array}
\]
The deformation theory in \S\ref{subsection deformations} gives rise to a universal sheaf $\twistOb_z\in\coh\mathfrak{X}_z$ for every $z\in\nonIso$.

\begin{prop}\label{recovers universal locally}
In the setting of \ref{assumptions do hold}(\ref{assumptions do hold 2}), for every $z\in\nonIso$, $\add j_z^*\twistOb=\add \twistOb_z$.
\end{prop}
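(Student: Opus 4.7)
The plan is to base-change both $j_z^*\twistOb$ and $\twistOb_z$ to the formal fibre $\mathfrak{X}_z$, where they arise from derived equivalences induced by $j_z^*\locGen$ and its basic summand, and then invoke the morita equivalence of \ref{B gives def locally 2} to deduce the additive relation at the sheaf level.

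Pick an affine open $V = \Spec R$ of $Y$ containing $z$, set $U = f^{-1}(V)$, and write $\widehat{\Lambda}$ and $\widehat{\sDefAlg}$ for the completions of $\Lambda_V = \End_U(\locGen|_U)$ and $(\Lambda_V)_{\con}$ at $z$. Applying flat base change to \ref{easy tilt lemma} and \ref{key assumptions}\eqref{key assumptions 1}, the pullback $j_z^*\locGen$ is tilting on $\mathfrak{X}_z$ with $\End_{\mathfrak{X}_z}(j_z^*\locGen) = \widehat{\Lambda}$, giving a derived equivalence
\[
\Phi \colon \Db(\coh \mathfrak{X}_z) \xrightarrow{\sim} \Db(\mod \widehat{\Lambda}^{\op}).
\]
Combining \ref{sheaf in deg 0}(1) with flat base change then yields $\Phi(j_z^*\twistOb) = \widehat{\sDefAlg}$. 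Dually, writing $\locGen_{\basic}$ for the basic summand of $j_z^*\locGen$ (removing multiplicities in its indecomposable decomposition), and $\widehat{\Lambda}^{\basic} := \End_{\mathfrak{X}_z}(\locGen_{\basic})$, a second equivalence $\Phi_{\basic}$ between $\Db(\coh \mathfrak{X}_z)$ and $\Db(\mod (\widehat{\Lambda}^{\basic})^{\op})$ satisfies $\Phi_{\basic}(\twistOb_z) = \CAz$; this is precisely the construction recalled in \S\ref{subsection deformations}, carried out in \cite{DW2}.

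The two equivalences are related by the morita equivalence between $\widehat\Lambda$ and $\widehat{\Lambda}^{\basic}$, implemented by an idempotent $e\in\widehat\Lambda$ with $\widehat\Lambda^{\basic} = e\widehat\Lambda e$ and $\locGen_{\basic} = j_z^*\locGen\cdot e$, whose image in $\widehat{\sDefAlg}$ witnesses the morita equivalence $\widehat{\sDefAlg} \sim \CAz$ of \ref{B gives def locally 2}. Since $j_z^*\locGen$ and $\locGen_{\basic}$ differ only by multiplicities, a standard argument shows that $\Phi^{-1}(\widehat{\sDefAlg})$ and $\Phi_{\basic}^{-1}(\CAz)$ lie in each other's additive closure, giving the desired equality $\add j_z^*\twistOb = \add \twistOb_z$.

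The main obstacle is the compatibility between the morita equivalence of \ref{B gives def locally 2} and the tilting bundles on $\mathfrak{X}_z$: namely, that the idempotent witnessing the morita equivalence between the two contraction algebras can be chosen inside $\widehat{\Lambda}$ so as to single out exactly the basic tilting bundle $\locGen_{\basic}$ used in the deformation-theoretic construction of $\twistOb_z$. This local-to-formal bookkeeping is implicit in \cite[\S 3]{DW2}, but must be made explicit here in order to bridge the global sheafy construction of this paper with the local deformation-theoretic one.
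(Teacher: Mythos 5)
Your argument is correct and takes essentially the same route as the paper: both reduce to the formal fibre, identify $j_z^*\twistOb$ with $\widehat{(\Lambda_V)_{\con}}$ by flat base change across the tilting equivalence for $j_z^*\locGen$, identify $\twistOb_z$ with the contraction algebra $\CAz$ of the basic algebra, and deduce the additive equality from the morita relation of \ref{B gives def locally 2} (i.e.\ \cite[3.7]{DW2}). The only difference is presentational: the paper transports both objects across the single equivalence $\RHom_{\mathfrak{X}_z}(j_z^*\locGen,\placeholder)$ and makes the final comparison explicit via the decompositions $\mathbb{F}\CA=P_1\oplus\cdots\oplus P_n$ and $\widehat{\Lambda}_{\con}=P_1^{\oplus a_1}\oplus\cdots\oplus P_n^{\oplus a_n}$ with $a_i\geq 1$, which is precisely the ``standard argument'' (progenerators correspond under the restricted morita equivalence) that you leave implicit.
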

\begin{proof}
This is implicit in \cite[3.7]{DW2}, but we sketch the argument here. To ease notation, write $\Lambda=\Lambda_V$.   

As in \cite[3.7]{DW2} the algebra $\widehat{\Lambda}$ is morita equivalent to an algebra $\AB$, via a functor $\mathbb{F}$, and furthermore there are decompositions as $\widehat{\Lambda}$-modules
\begin{equation}
\mathbb{F}\CA=P_1\oplus\hdots\oplus P_n
\quad\mbox{and}\quad
\widehat{\Lambda}_{\con}=P_1^{\oplus a_1}\oplus\hdots\oplus P_n^{\oplus a_n}\label{complete decomps}
\end{equation}
for some $a_i\geq 1$, where the $P_i$ are the projective $\widehat{\Lambda}_{\con}$-modules. Now
\[
\RHom_{\mathfrak{X}_z}(j_z^*\locGen,\placeholder)\colon 
\Db(\coh \mathfrak{X}_z)\to\Db(\mod\widehat{\Lambda})
\]
is an equivalence, and it is easy to see using flat base change that $j_z^*\twistOb$ corresponds to $\widehat{\Lambda}_{\con}$.  As in \cite[4.14]{HomMMP} and \cite[\S3.2]{DW1}, $\twistOb_z$ corresponds across the equivalence to $\mathbb{F}\CA$.  Since \eqref{complete decomps} implies that $\add\widehat{\Lambda}_{\con}=\add\mathbb{F}\CA$, it follows that $\add j_z^*\twistOb=\add \twistOb_z$.
\end{proof}

\section{Spherical Properties via Cohen--Macaulay Modules}\label{Section brutal}

This section considers the crepant contractions of \ref{assumptions do hold}\eqref{assumptions do hold 1} for the Zariski local case and characterises, under some assumptions on singularities, when the noncommutative enhancement $\sDefAlg$ is relatively spherical.  These results are globalised in \S\ref{global twists section}.  All this involves the Cohen--Macaulay property, which we now review.

\subsection{Cohen--Macaulay Modules}\label{section CM}
Recall that if $(R,\m)$ is a commutative local noetherian ring, with $M\in\mod R$, then the \emph{depth} of $M$ is defined to be
\[
\depth_R M:=\mathrm{min}\{ i\mid \Ext^i_R(R/\m,M)\neq 0\},
\]
and for all $\p\in\Ass(M)$ there is a chain of inequalities
\[
\depth_RM\leq \dim R/\p \leq \dim_RM.
\]
Then $M$ is called a \emph{Cohen--Macaulay} module if either $M=0$, or $M\neq 0$ and 
\[
\depth_R M=\dim_RM.
\]
We write $\CM_\cS R$ for the category of such modules.  It is clear that if $M$ is Cohen--Macaulay, necessarily $M$ is equidimensional (i.e.\ $\dim R/\p=\dim_R M$ for every minimal prime $\p\in\Supp M$), and has no embedded primes (i.e.\ every associated prime is minimal).

If $R$ is local Gorenstein, and $M\neq 0$, then by local duality (see e.g.\ \cite[3.5.11]{BH})
\begin{align*}
M\in\CM_{\cS} R
&\iff \RHom_R(M,R)\mbox{ is concentrated in degree }\dim R-\dim_R M.
\end{align*}
When $R$ is not necessarily local, but is still Gorenstein and equi-codimensional, we say that $M\in\mod R$ is CM if $M_\m\in\CM_\cS R_\m$ for all maximal ideals $\m$ of $R$, and again write $\CM_{\cS}R$ for the category of CM $R$-modules.  Since $\dim_{R_\m}\! M_\m$ can vary between maximal ideals,  $\RHom_R(M,R)$ may have cohomology in more than one degree.

We say that $M\in\mod R$ is \emph{maximal Cohen--Macaulay} if $\depth_{R_\m}\! M_\m=\dim R_\m$ for all $\m\in\Max R$, and we write $\MCM R$ for the category of maximal Cohen--Macaulay $R$-modules.

\subsection{Setting}\label{setting in Section 3}
The remainder of this section considers the following refinement of the crepant contraction setting of \ref{assumptions do hold}\eqref{assumptions do hold 1} to the affine case.

\begin{setup}\label{general modifying setup}
Suppose that $f\colon X\to Y$ is a contraction as in \ref{setupglobal}, where in addition $Y=\Spec R$ is affine and Gorenstein, $d = \dim X \geq 2$, $f$ is crepant, and $X$ admits a tilting bundle $\vlocGen=\cO_X\oplus \vlocGen_0$. 
\end{setup}

Necessarily $R$ is equi-codimensional by \cite[\citetype{Corollary }13.4]{EisenbudBook}, and a Gorenstein normal domain by assumption.
\begin{notation} 
We set $M:=f_*\vlocGen_0$, which is an $R$-module, and consider $\Lambda:=\End_R(R\oplus M)$, which is isomorphic to $\End_X(\vlocGen)$ by \ref{assumptions do hold}\eqref{assumptions do hold 1}.  Further, since $f$ is crepant necessarily $\Lambda\in\MCM R$ by \cite[4.8]{IW5}, and thus $M\cong\Hom_R(R,M)\in\MCM R$, being a summand of~$\Lambda$.  We write $\Lambda_{\con}:=\Lambda/I_{\con}$, where $I_{\con}:=[R]$.
\end{notation}

We will often reduce arguments about $\Lambda_{\con}$ to the completions of closed points, in which case the following notation will be useful.  For a closed point $z=\m\in\Spec R$, write $\mathfrak{R}$ for the completion of $R$ at $z$, and consider the Krull--Schmidt decomposition
\[
\widehat{M}=\mathfrak{R}^{\oplus a_0}\oplus M_1^{\oplus a_1}\oplus\hdots\oplus M_n^{\oplus a_n}.
\]
Then we write $K=M_1\oplus\hdots\oplus M_n$, and $\AB:=\End_{\mathfrak{R}}(\mathfrak{R}\oplus K)$.  This depends on $z$, but we suppress this from the notation.  We define
\[
\CAz:=\AB/[\mathfrak{R}],
\]
but throughout this section, to ease notation we will usually refer to this as simply $\CA$.

\begin{defin}\label{def sph}
We say that $\Lambda_{\con}$ is \emph{$t$-relatively spherical} if
\[
\Ext^j_{\Lambda}(\Lambda_{\con},T)\cong\left\{  
\begin{array}{cl}
\K&\mbox{if }j=0,t\\
0&\mbox{else},
\end{array}
\right.
\]
for all simple $\Lambda_{\con}$-modules $T$.  There is no requirement that $\Lambda_{\con}$ is perfect.  
\end{defin}

There is an obvious variant of \ref{def sph} for $\CA$.  The following two subsections characterise when $\Lambda_{\con}$ and $\CA$ are relatively spherical, under the assumption that complete locally $R$ has only hypersurface singularities.  This additional assumption is motivated in part by~\ref{motivate hyper assump}, in part by the fact that in characteristic zero $3$-dimensional Gorenstein terminal singularities have this property \cite[0.6(I)]{Pagoda}, and in part since one of our main applications later in \S\ref{div to curve section} will be to crepant divisor-to-curve contractions of $3$-folds, in which case the hypersurface singularity condition holds automatically.

\subsection{Spherical via CM $R$-modules I} 
The following three results are elementary.

\begin{lemma}\label{support lemma}
In the setup \ref{general modifying setup}, with $\p\in\Spec R$, the following are equivalent.
\begin{enumerate}
\item $\p\in \nonIso$.
\item $\p\in\Supp\Lambda_{\con}$.
\item $M_\p$ is a non-free $R_\p$-module.
\end{enumerate}
\end{lemma}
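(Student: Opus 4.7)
The plan is to reduce the lemma to material already established: (1) $\iff$ (2) comes from the Global Contraction Theorem, while (2) $\iff$ (3) is a direct localisation calculation at $\p$.

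For (1) $\iff$ (2), observe that in the affine setup \ref{general modifying setup} with $Y=\Spec R$, the sheaf $\sDefAlg$ is just the quasicoherent sheaf associated to the $R$-module $\sDefAlg(\Spec R)\cong\Lambda_{\con}$ by \ref{affine is nice}, so $\Supp_Y \sDefAlg = \Supp_R\Lambda_{\con}$. The Global Contraction Theorem \ref{global cont thm} then identifies this common locus with $\nonIso$, yielding the equivalence.

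For (2) $\iff$ (3), I would pass to the stalk at $\p$. Since $\Hom$ between finitely generated modules commutes with localisation, $\Lambda_\p\cong\End_{R_\p}\!(R_\p\oplus M_\p)$, and the two-sided ideal $[R]\subseteq \Lambda$ localises to the ideal $[R_\p]\subseteq\Lambda_\p$ consisting of endomorphisms of $R_\p\oplus M_\p$ that factor through $\add R_\p$. Indeed, a morphism lies in the localised ideal iff it can be written, after clearing one denominator, as a composite through some $R^n$, which upon localising gives a factorisation through $R_\p^n$; conversely any factorisation through $R_\p^n$ clears denominators to yield a factorisation through $R^n$ of a multiple. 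Hence
\[
(\Lambda_{\con})_\p \;\cong\; \End_{R_\p}\!(R_\p\oplus M_\p)\big/[R_\p].
\]
This quotient vanishes precisely when $\id_{R_\p\oplus M_\p}$ lies in $[R_\p]$, which is equivalent to $R_\p\oplus M_\p$ being a summand of some $R_\p^n$, i.e.\ $M_\p\in\add R_\p$. Since $R_\p$ is local, projective and free coincide, so this holds iff $M_\p$ is a free $R_\p$-module. This chain of equivalences delivers (2) $\iff$ (3).

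No step is a serious obstacle: the heavy lifting is already contained in \ref{affine is nice} and \ref{global cont thm}. The only point that warrants care is verifying that the ideal $[R]$ behaves well under localisation, but this follows formally from the interaction between localisation and factorisations through finitely generated free modules, as sketched above.
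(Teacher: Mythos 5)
Your proposal is correct and follows essentially the same route as the paper: (1)$\Leftrightarrow$(2) is exactly the affine/local instance of the Global Contraction Theorem \ref{global cont thm} (via \ref{affine is nice}), and your localisation of $[R]$ and identity-in-the-ideal argument for (2)$\Leftrightarrow$(3) is the same computation the paper phrases compactly as $(\Lambda_{\con})_\p\cong\uEnd_{R_\p}(M_\p)$, which vanishes iff $M_\p$ is free. No gaps; your version just spells out the ideal-localisation step that the paper leaves implicit.
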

\begin{proof}
(1)$\Leftrightarrow$(2) is the local version of \ref{global cont thm}.\\
(2)$\Leftrightarrow$(3) $(\Lambda_{\con})_\p\cong\uEnd_{R_{\p}}(M_\p)$, which is zero if and only if $M_\p$ is free.
\end{proof}

The following is also elementary, and is a simple consequence of the depth lemma.
\begin{lemma}\label{Ext vanishes}
In the setup \ref{general modifying setup},  if $\dim_R\Lambda_{\con}\leq d-3$, then $\Ext^1_R(M,M)=0$.
\end{lemma}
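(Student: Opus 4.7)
The plan is to show that $\Ext^1_R(M,M)$ has dimension at most $d-3$ but depth at least $d-2$, whence the general inequality $\depth\leq\dim$ forces vanishing.

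The first step is to observe that $\Supp_R \Ext^1_R(M,M)\subseteq\Supp_R\Lambda_{\con}$. Indeed, for any prime $\p\notin\Supp_R\Lambda_{\con}$, Lemma~\ref{support lemma} gives that $M_\p$ is free over $R_\p$, hence $\Ext^1_{R_\p}(M_\p,M_\p)=0$. Combined with the hypothesis, this yields $\dim_R\Ext^1_R(M,M)\leq d-3$.

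Next, I establish the depth bound. Take the start of a free resolution $0\to\Omega M\to F\to M\to 0$, noting that $\Omega M$ is maximal Cohen--Macaulay by the depth lemma (since $F$ and $M$ both are). Applying $\Hom_R(\placeholder,M)$ produces
\[
0\to\End_R(M)\to\Hom_R(F,M)\to\Hom_R(\Omega M,M)\to\Ext^1_R(M,M)\to 0.
\]
Here $\End_R(M)$ is MCM as a summand of $\Lambda$, and $\Hom_R(F,M)\cong M^{\oplus\rank F}$ is MCM, so both have depth $d$. Splitting the four-term sequence at the image of the middle map and applying the depth lemma to the two resulting short exact sequences will give
\[
\depth_R\Ext^1_R(M,M)\geq\min\bigl(d-2,\ \depth_R\Hom_R(\Omega M,M)\bigr).
\]
To control the second term, the plan is to iterate: apply $\Hom_R(\placeholder,M)$ to $0\to\Omega^2 M\to F'\to\Omega M\to 0$ and again split and chase the depth lemma, using that every syzygy $\Omega^i M$ is MCM and that the support argument of the first step applies equally to $\Ext^i_R(M,M)$ for every $i\geq 1$. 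This bootstrap should deliver $\depth_R\Hom_R(\Omega M,M)\geq d-2$, and therefore $\depth_R\Ext^1_R(M,M)\geq d-2$.

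Finally, combining $\depth\geq d-2$ with $\dim\leq d-3$ is impossible for a nonzero finitely generated $R$-module, so $\Ext^1_R(M,M)=0$. The main obstacle is the depth bound on $\Hom_R(\Omega M,M)$: since $R\oplus M\oplus\Omega M$ is not a priori modifying, this Hom is not obviously a summand of any MCM algebra, and success of the bootstrap depends on propagating the MCM property through $\Hom_R(F_i,M)\cong M^{\oplus r_i}$ together with the support constraints on the higher $\Ext^i$'s.
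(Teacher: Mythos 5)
Your first step is correct: $\Supp_R\Ext^1_R(M,M)\subseteq\Supp_R\Lambda_{\con}$ by \ref{support lemma}, so $\dim_R\Ext^1_R(M,M)\leq d-3$, and the split four-term sequence does give $\depth\Ext^1_R(M,M)\geq\min\bigl(\depth\Hom_R(\Omega M,M),\,d-2\bigr)$ locally. The genuine gap is exactly the one you flag at the end: the bound $\depth\Hom_R(\Omega M,M)\geq d-2$ is never established, and the proposed bootstrap cannot establish it. At stage $i$ the depth lemma gives $\depth\Hom(\Omega^iM,M)\geq\min\bigl(d,\depth C_i+1\bigr)$ with $\depth C_i\geq\min\bigl(\depth\Hom(\Omega^{i+1}M,M),\,\depth\Ext^{i+1}_R(M,M)+1\bigr)$, so the depths of the higher Ext modules enter the estimate, and the hypotheses give no lower bound on them: the support argument bounds their \emph{dimension} by $d-3$, and since depth is at most dimension, any nonzero $\Ext^{i+1}_R(M,M)$ has depth $\leq d-3$ (possibly $0$), which caps the chain rather than feeding it. Without that control the only unconditional input is the generic bound $\depth\Hom(\Omega M,M)\geq 2$ (Hom into a module of depth $\geq 2$), which yields $\depth\Ext^1_R(M,M)\geq\min(2,d-2)$ and hence a contradiction with $\dim\leq d-3$ only when $d\leq 4$. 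Since the lemma is used in arbitrary dimension (e.g.\ in \ref{case 3} and for the determinantal applications), the argument does not close for $d\geq 5$.

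The paper's proof avoids this by trading your strong global depth bound for a weak one after localizing, inducting on the height of primes: at height-two primes $M_\p$ is free by \ref{support lemma}; assuming vanishing at all primes of smaller height, at a height-$h$ prime $\q$ (with $h\geq 3$) the module $\Ext^1_{R_\q}(M_\q,M_\q)$ has finite length, and the depth lemma applied to your same four-term sequence, using only that $\End_{R_\q}(M_\q)\in\MCM R_\q$ and $\depth\Hom_{R_\q}(\Omega M_\q,M_\q)\geq 2$, gives it positive depth, forcing it to vanish; no information about $\Ext^{\geq 2}$ is needed at any stage. To repair your write-up, keep your two ingredients (the support observation and the split four-term sequence) but replace the one-shot dimension-versus-depth contradiction by this localize-and-induct scheme.
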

\begin{proof}
By the assumption and \ref{support lemma}, for $\p\in\Spec R$ with $\hgt\p=2$, $M_\p$ is a free $R_\p$-module, and hence certainly $\Ext^1_{R_\p}(M_\p,M_\p)=0$.  This implies that for all $\q\in\Spec R$ with $\hgt\q=3$, $\Ext^1_{R_\q}(M_\q,M_\q)$ is a finite length $R_\q$-module.

But since $\End_{R_\q}(M_\q)\in\MCM R_\q$, by the depth lemma $\Ext^1_{R_\q}(M_\q,M_\q)=0$.  In turn, this implies that the Ext group for height four primes has finite length.  Again, by the depth lemma, this must be zero.  By induction the result follows.
\end{proof}

\begin{cor}\label{cor sing}
In the setup \ref{general modifying setup},  if  $\dim_R\Sing R\leq d-3$, then $\Ext^1_R(M,M)=0$.
\end{cor}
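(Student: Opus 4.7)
The plan is to deduce this immediately from \ref{Ext vanishes}: that result gives the desired Ext vanishing provided $\dim_R \Lambda_{\con} \leq d-3$, and for this in turn it is enough to establish the set-theoretic inclusion $\Supp_R \Lambda_{\con} \subseteq \Sing R$, since then
\[
\dim_R \Lambda_{\con} \;=\; \dim \Supp_R \Lambda_{\con} \;\leq\; \dim_R \Sing R \;\leq\; d-3
\]
by hypothesis.

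To establish the inclusion, I would fix $\p \in \Spec R$ such that $R_\p$ is regular, and show $\p \notin \Supp_R \Lambda_{\con}$. By the equivalence (2)$\Leftrightarrow$(3) of \ref{support lemma}, it suffices to prove that $M_\p$ is free over $R_\p$. Now in the crepant setup of \ref{general modifying setup}, it was noted that $\Lambda = \End_R(R \oplus M) \in \MCM R$, so its summand $M$ is also MCM over $R$. Localising an MCM module at any prime of a Cohen--Macaulay ring again yields an MCM module, hence $M_\p \in \MCM R_\p$. But over a regular local ring, the Auslander--Buchsbaum formula forces any MCM module to have projective dimension zero, so $M_\p$ is free. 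This gives the inclusion.

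There is essentially no obstacle here: the argument is a two-step reduction, first to a dimension estimate on $\Supp_R \Lambda_{\con}$, and then to the classical fact that MCM modules over regular local rings are free. The only minor point requiring care is that localisation of an MCM module at a non-maximal prime remains MCM, but this is standard for Cohen--Macaulay rings and is the only ingredient not already recorded in the immediately preceding results.
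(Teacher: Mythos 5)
Your argument is correct and is essentially the paper's own proof: the paper likewise notes that $M\in\MCM R$ (as a summand of $\Lambda$), deduces from \ref{support lemma} that $\dim_R\Lambda_{\con}\leq\dim_R\Sing R$, and then invokes \ref{Ext vanishes}. You have merely made explicit the standard facts (MCM localises to MCM over a Cohen--Macaulay ring, and MCM over a regular local ring is free by Auslander--Buchsbaum) that the paper leaves implicit.
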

\begin{proof}
Since $M\in\CM R$, it is clear from \ref{support lemma} that $\dim_R\Lambda_{\con}\leq \dim_R\Sing R$.  The result then follows immediately from  \ref{Ext vanishes}.
\end{proof}

The remainder of this subsection considers the case when $\Lambda_{\con}$ does not have maximal dimension, that is when $\dim_R\Lambda_{\con}\leq d-3$.  The case  $\dim_R\Lambda_{\con}= d-2$ is trickier, and will be dealt with in the next subsection.

\begin{lemma}\label{case 3}
In the setup \ref{general modifying setup}, suppose further that $Y$ is complete locally a hypersurface.  If $\dim_R\Lambda_{\con}\leq d-3$, then at every closed point $z\in\nonIso$,
\begin{enumerate}
\item\label{case 3 1} $\pd_\AB\CAz=3$.
\item\label{case 3 2}  $\CAz\in\CM_{d-3}\mathfrak{R}$, in particular $\dim_{\mathfrak{R}}\CAz=d-3$.
\setcounter{tempenum}{\theenumi}
\end{enumerate}
Furthermore, the following statements hold.
\begin{enumerate}
\setcounter{enumi}{\thetempenum}
\item\label{case 3 3} The stalk of $\cO_Y$ at every closed point of $Z$ has dimension $d-3$.
\item\label{case 3 4} $\pd_\Lambda\Lambda_{\con}=3$.
\item\label{case 3 5} $\Lambda_{\con}$ is $3$-relatively spherical.
\end{enumerate}
\end{lemma}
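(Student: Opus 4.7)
The plan is to work complete-locally at a closed point $z \in Z$, where the combinatorics are cleanest, and then globalize. Since projective dimension, depth, dimension, and Ext vanishing are all detected after completion at closed points, all five assertions reduce to analogous statements about $\CAz \cong \uEnd_\mathfrak{R}(K)$ over $\AB = \End_\mathfrak{R}(N)$ where $N = \mathfrak{R} \oplus K$. The key input is that $K$ is a rigid MCM module over the hypersurface $\mathfrak{R}$: rigidity follows from \ref{Ext vanishes} upon completion, while MCM follows from $\Lambda \in \MCM R$. Since $\mathfrak{R}$ is a hypersurface, the minimal free resolution of $K$ is two-periodic, so $\Omega^2 K \cong K$ and in particular $\Omega^2 K \in \add K$.

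Next I would build the projective resolution. Apply $\Hom_\mathfrak{R}(N, \placeholder)$ to
\[
0 \to \Omega K \to \mathfrak{R}^a \to K \to 0 \quad \text{and} \quad 0 \to K \to \mathfrak{R}^b \to \Omega K \to 0,
\]
where I have substituted $\Omega^2 K \cong K$ in the second sequence. Ext vanishing in each case comes from a triple combination: $\Ext^{\geq 1}_\mathfrak{R}(\mathfrak{R}, \placeholder) = 0$, the MCM property $\Ext^{\geq 1}_\mathfrak{R}(K, \mathfrak{R}) = 0$ over the Gorenstein $\mathfrak{R}$, and rigidity $\Ext^1_\mathfrak{R}(K, K) = 0$. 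Splicing the two resulting short exact sequences and observing that the image of the first Hom-map is exactly $[\mathfrak{R}]_K$ yields the length-three projective resolution
\[
0 \to \Hom_\mathfrak{R}(N, K)^c \to (\AB e_0)^b \to (\AB e_0)^a \to \AB e_K \to \CAz \to 0,
\]
whose outer terms are $\AB$-projective precisely because $\Omega^2 K \in \add K$.

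From this resolution the depth lemma gives $\depth_\mathfrak{R} \CAz \geq d-3$. The hypothesis $\dim_R \Lambda_{\con} \leq d-3$ complete-locally becomes $\dim_\mathfrak{R} \CAz \leq d - 3$, and combined with $\depth \leq \dim$ this forces $\depth_\mathfrak{R} \CAz = \dim_\mathfrak{R} \CAz = d - 3$, establishing the CM assertion (2). Since $\Lambda$ comes from a crepant resolution, $\AB$ is an NCCR of $\mathfrak{R}$, so $\gl \AB = d$ and the Auslander--Buchsbaum formula yields $\pd_\AB \CAz = d - (d-3) = 3$, establishing (1). Statement (3) is now immediate from $\dim_\mathfrak{R} \CAz = d - 3$, which describes the local dimension of $Z$ at $z$.

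For (4) I would globalize: projective dimension equals the supremum of local projective dimensions over closed points, and (1) says each is $3$. For (5), apply $\Hom_\Lambda(\placeholder, T)$ to the global analogue of the length-three resolution. Since $T$ is a simple $\Lambda_{\con}$-module, $T \cdot e_0 = 0$, so the middle two terms (which involve only $\Lambda e_0$) contribute zero, while the outer terms (which involve only $\Lambda e_K$) contribute copies of $e_K T = T$. This concentrates $\Ext^j_\Lambda(\Lambda_{\con}, T)$ in degrees $0$ and $3$, as the definition of $3$-relatively spherical requires. The main technical obstacle is ensuring that $\Omega^2 K \in \add K$ exactly (not just up to free summands) so that the endpoints of the resolution are genuinely $\AB$-projective, and verifying each Ext vanishing in the splicing; once this is in place, the Auslander--Buchsbaum input and the $e_0 T = 0$ observation do the rest.
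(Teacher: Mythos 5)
Your proposal is essentially the paper's own argument: use the two-periodicity $\Omega^2K\cong K$ of matrix factorisations over the complete local hypersurface $\mathfrak{R}$, the rigidity $\Ext^1_{\mathfrak{R}}(K,K)=0$ coming from \ref{Ext vanishes}, splice the two induced sequences of projectives to get the length-three resolution $0\to P_K\to P_0^{\oplus b}\to P_0^{\oplus a}\to P_K\to\CA\to 0$, deduce the CM property and $\pd_\AB\CA=3$ from a depth count, and obtain \eqref{case 3 3}--\eqref{case 3 5} by passing between $R$ and its completions and by killing the middle terms with the idempotent $e_0$. Two small corrections. First, your justification ``$\AB$ is an NCCR of $\mathfrak{R}$, so $\gl\AB=d$'' is not available in setup \ref{general modifying setup}: $X$ is only assumed to admit a tilting bundle and may be singular, so $\AB$ need not have finite global dimension. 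This is harmless, because the Auslander--Buchsbaum formula you need only requires $\AB\in\CM\mathfrak{R}$ (which crepancy gives) together with the finiteness $\pd_\AB\CA\leq 3$ you have already established by the explicit resolution; alternatively the inequality $\depth_{\mathfrak{R}}\CA\geq d-\pd_\AB\CA$ from the depth lemma, combined with $\CA\in\CM_{d-3}\mathfrak{R}$, already rules out $\pd_\AB\CA\leq 2$. Second, for \eqref{case 3 5} there is no ``global analogue'' of the periodic resolution over $\Lambda$ itself; since any simple $\Lambda_{\con}$-module $T$ has finite length and is supported at a single closed point, you should compute $\Ext^j_\Lambda(\Lambda_{\con},T)$ after completing at that point and then use the complete-local resolution (transferring across the morita equivalence with $\AB$), which is exactly how the paper checks \eqref{case 3 5}. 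Also note that the vanishing $\Ext^{\geq 1}_{\mathfrak{R}}(K,\mathfrak{R})=0$ you list is true but not actually needed for the splicing; only $\Ext^1_{\mathfrak{R}}(K,K)=0$ and left-exactness of $\Hom$ enter.
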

\begin{proof}
(1)(2) By matrix factorisation there is an exact sequence
\[
0\to K\to F\to F\to K\to 0,
\]
where $F$ is a free $\mathfrak{R}$-module. By assumption and \ref{Ext vanishes}, we know that $\Ext^1_{\mathfrak{R}}(K,K)=0$, hence applying $\Hom_{\mathfrak{R}}(\mathfrak{R}\oplus K,-)$ to the above sequence gives a projective resolution
\[
0\to P_K\to P_0^{\oplus b}\to P_0^{\oplus b}\to P_K\to\CA\to 0.
\]
Thus $\pd_\AB\CA\leq 3$, from which Auslander--Buchsbaum implies that $\depth\CA\geq d-3$.  It thus follows that $\CA\in\CM_{d-3}\mathfrak{R}$, and so $\pd_\AB\CA=3$.\\
(3) By \eqref{case 3 2} $\dim_{\mathfrak{R}}\CA=d-3$, and by \ref{support lemma} $\dim_{\mathfrak{R}}\CA=\dim_{\mathfrak{R}}\nonIso$. \\
(4) This follows from \eqref{case 3 1}, since projective dimension can be checked complete locally.\\
(5) This follows by checking complete locally, and using the projective resolution of $\CA$ given above.
\end{proof}

\subsection{Spherical via CM $R$-modules II}  Seeking a more general version of \ref{case 3} when $\Lambda_{\con}$ has maximal dimension is subtle, for two reasons.  First, $\pd_\Lambda\Lambda_{\con}=\infty$ can occur, and second the dimension of $Z$ at closed points may vary, in which case asking for $\Lambda_{\con}\in\CM_{\cS} R$ is more natural than specifying a particular $\CM_{d-t}R$.

Before extracting a global statement, we first work complete locally, and extend \ref{case 3} as follows.

\begin{thm}\label{spher prop complete}
In the setup \ref{general modifying setup}, suppose further that $R$ is complete locally a hypersurface.  Then $\CA$ is $t$-relatively spherical if and only if
\begin{enumerate}
\item\label{spher prop complete 1} $\pd_\AB\CA<\infty$.
\item\label{spher prop complete 2} $\CA\in\CM_{d-t}\mathfrak{R}$.
\end{enumerate}
In this case necessarily $t=d-\dim_{\mathfrak{R}}\CA$, which is either $2$ or $3$, and furthermore the assumptions \eqref{spher prop complete 1} and \eqref{spher prop complete 2} hold when $\dim_{\mathfrak{R}}\CA\leq d-3$.
\end{thm}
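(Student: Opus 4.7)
My approach combines matrix factorisation (available since $\mathfrak{R}$ is a hypersurface) with the Auslander--Buchsbaum formula, applied to the noncommutative ring $\AB\in\MCM\mathfrak{R}$.  Specifically, Auslander--Buchsbaum yields $\pd_\AB N + \depth_\mathfrak{R} N = d$ whenever $\pd_\AB N < \infty$, and a matrix factorisation of $K$ gives, via $\Hom_\mathfrak{R}(\mathfrak{R}\oplus K,\placeholder)$, an eventually $2$-periodic minimal projective resolution of $\CA$ over $\AB$, in the style of the 3-step resolution constructed in \ref{case 3}.

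For the \emph{if} direction, assume (1) and (2).  Auslander--Buchsbaum immediately gives $\pd_\AB\CA = t$.  I would then exhibit the minimal projective resolution of $\CA$ of length $t$ from the matrix factorisation of $K$: the case $t=3$ is precisely \ref{case 3}, and for $t=2$ the resolution is one step shorter, with the truncation governed by $\Ext^1_\mathfrak{R}(K,K)$ together with the Cohen--Macaulay hypothesis on $\CA$.  The contribution to $\Ext^j_\AB(\CA,T)$ at each simple $\CA$-module $T$ is then read off directly from this explicit resolution, yielding the spherical pattern $\K$ in degrees $0$ and $t$ and $0$ elsewhere.

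For the \emph{only if} direction, assume $\CA$ is $t$-relatively spherical.  For (1), the sphericality condition $\Ext^j_\AB(\CA,T_i) = 0$ for $i=1,\dots,n$ and $j > t$ constrains the minimal projective resolution past degree $t$ to involve only the projective $P_0$ corresponding to $\mathfrak{R}$. Combined with the matrix factorisation structure---where $P_0$-syzygies over $\AB$ pull back to $\mathfrak{R}$-free syzygies via the tilting equivalence---this forces the resolution to truncate, i.e., $\pd_\AB\CA < \infty$.  Auslander--Buchsbaum then gives $\depth_\mathfrak{R}\CA = d-t$.  For (2), I must upgrade this depth equality to a dimension equality.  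The plan is to use local duality $\Ext^j_\mathfrak{R}(\CA,\mathfrak{R})\cong H^{d-j}_\m(\CA)^{\vee}$ on the Gorenstein ring $\mathfrak{R}$, together with a change-of-rings spectral sequence $\Ext^p_\AB(\CA, \Ext^q_\mathfrak{R}(\AB,\placeholder)) \Rightarrow \Ext^{p+q}_\mathfrak{R}(\CA,\placeholder)$, to show that an associated prime of $\CA$ of dimension greater than $d-t$ would produce a nonzero $\Ext^j_\AB(\CA,T)$ for some simple $\CA$-module $T$ in some $0<j<t$, violating the sharp spherical vanishing.

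The constraint $t\in\{2,3\}$ follows from the setup: $\nonIso$ has codimension at least two in $Y$ for a crepant contraction between normal varieties (using $\Rf_*\cO_X = \cO_Y$ and normality), hence under (2) $t \geq 2$; and the matrix factorisation resolution of \ref{case 3}(1) has length at most three, which combined with sphericality gives $t\leq 3$.  The \emph{furthermore} claim that (1) and (2) hold when $\dim_\mathfrak{R}\CA\leq d-3$ is precisely \ref{case 3}(1)--(2), which can be cited directly.  The main obstacle I expect is the Cohen--Macaulay step in the \emph{only if} direction, which requires transferring a noncommutative $\Ext$-vanishing pattern, sampled only against simple $\AB$-modules, into an $\mathfrak{R}$-linear equidimensionality statement for $\CA$ via careful duality and spectral-sequence bookkeeping.
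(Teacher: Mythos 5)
Your overall skeleton (matrix factorisations, Auslander--Buchsbaum, reading off the spherical property from an explicit resolution, and quoting \ref{case 3} for the ``furthermore'' claim) matches the paper, and the ($\Leftarrow$) direction is essentially right in outline, though even there the $t=2$ case is glossed: the length-two resolution is not ``governed by $\Ext^1_{\mathfrak{R}}(K,K)$'' (which need not vanish when $\dim_{\mathfrak{R}}\CA=d-2$), but by the fact that $\pd_\AB\CA=2$ forces $\Omega K\cong K$, which the paper imports from \cite[A.3]{HomMMP}. The genuine gaps are in the ($\Rightarrow$) direction. First, your mechanism for $\pd_\AB\CA<\infty$ --- ``the tail of the minimal resolution past degree $t$ involves only $P_0$, and $P_0$-syzygies pull back to $\mathfrak{R}$-free syzygies, forcing truncation'' --- does not force anything: over a hypersurface $\mathfrak{R}$, minimal free resolutions are typically infinite and eventually $2$-periodic, so knowing that high syzygies are resolved by $\add P_0$ (equivalently by free $\mathfrak{R}$-modules through the idempotent $e_0$) is perfectly compatible with infinite projective dimension. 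The actual work is at the front of the resolution: one must identify $\Omega^2\CA$ (resp.\ $\Omega^3\CA$) as the projective $P_K$, which the paper does by comparing the projective cover of $\Ker\psi=\Hom_{\mathfrak{R}}(\mathfrak{R}\oplus K,\Omega K)$ with a minimal $\add(\mathfrak{R}\oplus K)$-approximation of $\Omega K$, using reflexive equivalence, surjectivity of $\add\mathfrak{R}$-approximations, and the hypersurface facts $\operatorname{rank}\Omega K=\operatorname{rank}K$, $\Omega^2K\cong K$; in the $t\geq 3$ case this analysis simultaneously \emph{proves} $\Ext^1_{\mathfrak{R}}(K,K)=0$ and that $t=3$. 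This also exposes a circularity in your argument for $t\leq 3$: you appeal to the length-three resolution of \ref{case 3}(1), but that resolution is only exact when $\Ext^1_{\mathfrak{R}}(K,K)=0$ (equivalently, in the regime $\dim_{\mathfrak{R}}\CA\leq d-3$ of \ref{Ext vanishes}), which is not available at that point of the only-if direction.

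Second, the Cohen--Macaulay upgrade from $\depth_{\mathfrak{R}}\CA=d-t$ to $\dim_{\mathfrak{R}}\CA=d-t$, which you flag as the main obstacle and propose to attack via local duality and a change-of-rings spectral sequence, is left as a plan with no indication it closes. The paper's route is different and more concrete: for $t=2$ it is the elementary chain $d-2=\depth_{\mathfrak{R}}\CA\leq\dim_{\mathfrak{R}}\CA\leq\dim\Sing\mathfrak{R}\leq d-2$ (using normality), and for $t=3$ one rules out a height-two prime $\p\in\Supp\CA$ by localising: the already-established vanishing $\Ext^1_{\mathfrak{R}_\p}(K_\p,K_\p)=0$ together with the $1$-Calabi--Yau property of $\uCM\mathfrak{R}_\p$ for the Gorenstein surface $\mathfrak{R}_\p$ forces $K_\p$ to be free, contradicting \ref{support lemma}. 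Without an argument of this kind (or a completed version of your spectral-sequence bookkeeping showing that a component of dimension $d-2$ produces a forbidden $\Ext^j_\AB(\CA,T)$ with $0<j<t$), condition \eqref{spher prop complete 2} is not established, so as written the proposal does not prove the theorem.
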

\begin{proof}
($\Leftarrow$) \emph{Case} $t=2$:  By Auslander--Buchsbaum, $\pd_\AB\CA=2$.  It then follows from \cite[A.3]{HomMMP} that $\Omega K\cong K$, and so applying $\Hom_{\mathfrak{R}}(\mathfrak{R}\oplus K,-)$ to the exact sequence
\[
0\to \Omega K\to F\to K\to 0
\]
gives the minimal projective resolution
\[
0\to P_K\to P_0^{\oplus a}\to P_K\to\CA\to 0.
\]
Hence $\CA$ is $2$-relatively spherical, and since by assumption $\CA\in\CM_{d-2}\mathfrak{R}$, we have~$t=d-\dim\CA$.\\
\emph{Case} $t\geq 3$:  Since $\CA\in\CM_t\mathfrak{R}$, necessarily $\dim_{\mathfrak{R}}\CA\leq d-3$, so by \ref{case 3} $\CA$ is \mbox{$3$-relatively} spherical, and $t=d-\dim_{\mathfrak{R}}\CA$.\\
($\Rightarrow$) \emph{Case} $t=2$: Consider the beginning of the minimal projective resolution of $\CA$.  Since $\CA$ is $2$-relatively spherical, this has the form
\[
\to P_K\to P_0^{\oplus a}\xrightarrow{\psi} P_K\to\CA\to 0. 
\]
Certainly $\Ker\psi=\Hom_{\mathfrak{R}}(\mathfrak{R}\oplus K,\Omega K)$, hence the projective cover of $\Ker\psi$ is obtained by applying $\Hom_{\mathfrak{R}}(\mathfrak{R}\oplus K,-)$ to a minimal $\add(\mathfrak{R}\oplus K)$-approximation
\[
f\colon U\to \Omega K.
\]
But since the projective cover of $\Ker\psi$ is $P_K$, it follows that $\Hom_{\mathfrak{R}}(\mathfrak{R}\oplus K,U)\cong P_K:=\Hom_{\mathfrak{R}}(\mathfrak{R}\oplus K,K)$, so by reflexive equivalence $U\cong K$.  Further, since $f$ is in particular an $\add \mathfrak{R}$-approximation, it is necessarily surjective. But $\mathfrak{R}$ is a hypersurface, so the rank of $K$ equals the rank of $\Omega K$, hence $\Ker f=0$ and thus $f$ is an isomorphism.  In turn, this implies that $\Ker\psi\cong P_K$, and hence $\pd_\AB\CA=2$.  

By Auslander--Buchsbaum, $\depth_{\mathfrak{R}}\CA=d-2$.  But as in \ref{cor sing}, and since $\mathfrak{R}$ is normal 
\[
d-2=\depth_{\mathfrak{R}}\CA\leq \dim_{\mathfrak{R}}\CA\leq \dim\Sing \mathfrak{R}\leq d-2.
\]
Hence equality holds throughout, and  $\CA\in\CM_{d-2}\mathfrak{R}$.\\
\emph{Case} $t\geq 3$: Again, consider the beginning of the minimal projective resolution of $\CA$, which  now has the form
\[
\to P_0^{\oplus b}\xrightarrow{\phi} P_0^{\oplus c}\xrightarrow{\psi} P_K\to\CA\to 0.
\]
The morphism $P_0^{\oplus b}\to\Ker\psi=\Hom_{\mathfrak{R}}(\mathfrak{R}\oplus K,\Omega K)$ is induced from a morphism of the form $f\colon F\to \Omega K$ (where $F$ is free), 
which is a minimal $\add({\mathfrak{R}}\oplus K)$-approximation.  Again, necessarily this $f$ is surjective, and its kernel is $\Omega\Omega K\cong K$, since $\mathfrak{R}$ is a hypersurface.  But then, applying $\Hom_{\mathfrak{R}}(\mathfrak{R}\oplus K,-)$ to the exact sequence
\[
0\to K\to F\xrightarrow{f} \Omega K\to 0
\]
and using the fact that $f$ is an $\add (\mathfrak{R}\oplus K)$-approximation, 
\[
0\to \Hom_{\mathfrak{R}}({\mathfrak{R}}\oplus K,K)\to \Hom_{\mathfrak{R}}({\mathfrak{R}}\oplus K,F)\twoheadrightarrow \Hom_{\mathfrak{R}}({\mathfrak{R}}\oplus K,\Omega K)\to \Ext^1_{\mathfrak{R}}(K,K)\to 0
\]
is exact.  It follows that $\Ext^1_{\mathfrak{R}}(K,K)=0$, and $\Ker\phi=P_K$.  Consequently, $\pd_\AB\CA=3$ and $\CA$ is $3$-relatively spherical, thus $t=3$.

By Auslander--Buchsbaum, $\depth_{\mathfrak{R}}\CA=d-3$.  We claim that $\CA\in \CM_{d-3}{\mathfrak{R}}$, so we just need to prove that $\dim_{\mathfrak{R}} \CA\neq d-2$.  If there exists $\p\in\Supp\CA$ with height two, then by \ref{support lemma} $K_\p$ is a non-free $R_\p$-module.  But by the above 
\[
\Hom_{\uCM R_\p}\!(K_\p,K_\p[1])\cong\Ext^1_{{\mathfrak{R}}_\p}\!(K_\p,K_\p)=0,
\]
and further since ${\mathfrak{R}}_\p$ is a Gorenstein surface with only an isolated singular point,  $\uCM {\mathfrak{R}}_\p$ is $1$-CY.  This implies that $\uHom_{{\mathfrak{R}}_\p}\!(K_\p,K_\p)=0$, and so $K_\p$ is free, which is a contradiction.  Hence such a $\p$ with height two cannot exist, and so $\CA\in\CM_{d-3}{\mathfrak{R}}$.

The last statement is \ref{case 3}.
\end{proof}

\begin{remark}
Neither condition \eqref{spher prop complete 1} or \eqref{spher prop complete 2} in \ref{spher prop complete}  is guaranteed if $\dim_{\mathfrak{R}}\CA=d-2$; see \ref{poky out alg example} below and also \cite[4.18]{HomMMP}.
\end{remark}

Continuing to work complete locally, we obtain the following tilting consequence.

\begin{prop}\label{complete tilting}
In the setup \ref{general modifying setup}, suppose further that $R$ is complete locally a hypersurface.  If the equivalent conditions of \ref{spher prop complete} hold, then $I_\AB$ is a tilting $\AB$-module.
\end{prop}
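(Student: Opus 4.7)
The plan is to verify directly that $I_\AB$ satisfies the three defining properties of a tilting module over $\AB$: finite projective dimension, higher self-Ext vanishing, and thick generation of $\Db(\mod \AB)$.

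First, I would identify $I_\AB$ explicitly. Writing $\AB = P_0 \oplus P_K$ for the decomposition into projective summands via the idempotent $e$ corresponding to $\mathfrak{R}$, and using $\AB e \subseteq I_\AB$ (which forces $\CA e = 0$), the short exact sequence
\[
0 \to I_\AB \to \AB \to \CA \to 0
\]
decomposes as $I_\AB = P_0 \oplus N$, where $N$ denotes the first syzygy of $\CA$ taken with respect to the projective cover $P_K \twoheadrightarrow \CA$.

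\emph{Step 1 (projective dimension).} From \eqref{spher prop complete 1} and the displayed sequence with $\AB$ projective, $\pd_\AB I_\AB = \pd_\AB \CA - 1 \in \{1,2\}$, with the two values corresponding to the cases of \ref{spher prop complete}.

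\emph{Step 2 (generation).} The explicit minimal projective resolutions of $\CA$ from the proofs of \ref{spher prop complete} and \ref{case 3}, namely $0 \to P_K \to P_0^{\oplus a} \to N \to 0$ in case $t=2$ and $0 \to P_K \to P_0^{\oplus b} \to P_0^{\oplus c} \to N \to 0$ in case $t=3$, realise $P_K$ as a finite cone built from terms in $\add(P_0 \oplus N) = \add I_\AB$. Hence $\AB = P_0 \oplus P_K \in \operatorname{thick}(I_\AB)$.

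\emph{Step 3 (Ext vanishing).} It suffices to show $\Ext^i_\AB(N, P_0) = 0 = \Ext^i_\AB(N, N)$ for all $i \geq 1$. The sequence $0 \to N \to P_K \to \CA \to 0$ gives dimension-shift isomorphisms $\Ext^i(N, -) \cong \Ext^{i+1}(\CA, -)$ for $i \geq 1$, reducing the task to $\Ext^{\geq 2}_\AB(\CA, P_0) = 0 = \Ext^{\geq 2}_\AB(\CA, N)$. Applying $\Hom_\AB(-, P_0)$ to the projective resolutions above, I would identify the resulting cokernels with groups derived from the matrix factorisation $0 \to K \to F \to F \to K \to 0$ via the standard identifications $\Hom_\AB(P_K, P_0) \cong \Hom_\mathfrak{R}(K, \mathfrak{R})$, $\Hom_\AB(P_0, P_0) \cong \mathfrak{R}$, and $\Hom_\AB(P_K, P_K) \cong \End_\mathfrak{R}(K)$. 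These cokernels then vanish using that $K$ is MCM over the Gorenstein $\mathfrak{R}$ (so $\Ext^{\geq 1}_\mathfrak{R}(K, \mathfrak{R}) = 0$) together with $\Ext^{\geq 1}_\mathfrak{R}(K, K) = 0$ (holding by \ref{Ext vanishes} in case $t=3$, and following from $\Omega K \cong K$ in case $t=2$). For $\Ext^{\geq 2}(\CA, N)$, a further application of the same short exact sequence reduces matters to $\Ext^{\geq 2}(\CA, P_K)$ (handled analogously via $\Hom_\AB(-, P_K)$) together with $\Ext^{\geq 1}(\CA, \CA)$, the latter computed directly from applying $\Hom_\AB(-, \CA)$ to the resolution, using $\Hom_\AB(P_0, \CA) = \CA e = 0$.

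The main obstacle lies in Step 3, specifically in the vanishing $\Ext^{\geq 2}_\AB(\CA, P_0) = 0$: the relative spherical hypothesis on $\CA$ governs only Ext into simple $\Lambda_{\con}$-modules, whereas $P_0$ is not a $\Lambda_{\con}$-module and so cannot be addressed directly. This is where the hypersurface assumption on $\mathfrak{R}$ is essential, supplying the matrix factorisation of $K$ and the periodicity $\Omega^2_\mathfrak{R} K \cong K$, which, together with the maximal Cohen--Macaulay property of $K$, translate the required $\AB$-level Ext computations into vanishing statements on the level of $\mathfrak{R}$.
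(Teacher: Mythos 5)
Your Steps 1 and 2 are fine, and the identification $I_\AB\cong P_0\oplus N$ with $N=\Omega_\AB\CA$ is correct, but Step 3 has a genuine gap, concentrated in the two inputs ``$\Ext^{\geq 1}_{\mathfrak{R}}(K,K)=0$'' and ``$\Ext^{\geq 2}_\AB(\CA,P_K)=0$''. The first is false as you state it: since $K$ is MCM over the Gorenstein ring $\mathfrak{R}$ one has $\Ext^1_{\mathfrak{R}}(K,\mathfrak{R})=0$, whence $\Ext^1_{\mathfrak{R}}(K,K)\cong\uHom_{\mathfrak{R}}(\Omega K,K)$; so $\Omega K\cong K$ gives $\Ext^1_{\mathfrak{R}}(K,K)\cong\uEnd_{\mathfrak{R}}(K)\neq 0$ (the identity of $K$ does not factor through a free module, as $K$ has no free summands) --- the exact opposite of what you claim in the case $t=2$. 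Likewise, even in the case $t=3$, where $\Ext^1_{\mathfrak{R}}(K,K)=0$ does hold by \ref{Ext vanishes}, the two-periodicity over the hypersurface gives $\Ext^2_{\mathfrak{R}}(K,K)\cong\uHom_{\mathfrak{R}}(\Omega^2K,K)\cong\uEnd_{\mathfrak{R}}(K)\neq 0$. Feeding the explicit resolutions into $\Hom_\AB(\placeholder,P_K)$ then shows that your second input fails in precisely the degrees you need: for $t=2$, $\Ext^2_\AB(\CA,P_K)\cong\Ext^1_{\mathfrak{R}}(K,K)\neq 0$, and for $t=3$, $\Ext^3_\AB(\CA,P_K)\cong\Ext^1_{\mathfrak{R}}(\Omega K,K)\cong\Ext^2_{\mathfrak{R}}(K,K)\neq 0$. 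Consequently the exact sequence $\Ext^{i}_\AB(\CA,\CA)\to\Ext^{i+1}_\AB(\CA,N)\to\Ext^{i+1}_\AB(\CA,P_K)$ cannot deliver $\Ext^{\geq 2}_\AB(\CA,N)=0$, i.e.\ $\Ext^{\geq 1}_\AB(N,N)=0$, which is the heart of the self-Ext vanishing for $I_\AB$. (Your computation of $\Ext^{\geq 1}_\AB(\CA,P_0)=0$ is fine, since it only uses $\Ext^{>0}_{\mathfrak{R}}(K,\mathfrak{R})=0$.)

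The vanishing you want is nevertheless true, but it must be obtained without routing through $\Ext^{\geq 2}_\AB(\CA,P_K)$: for instance, compute $\Ext^i_\AB(N,N)$ directly from the projective resolution of $N$, with values in $N$ itself, which consists of the maps $\mathfrak{R}\oplus K\to K$ factoring through $\add\mathfrak{R}$. The required surjectivity statements then amount to lifting such maps along the inclusions $K\to F$, respectively $\Omega K\to F$, and this uses only $\Ext^1_{\mathfrak{R}}(K,\mathfrak{R})=0=\Ext^1_{\mathfrak{R}}(\Omega K,\mathfrak{R})$. This is in effect what the paper does by quoting \cite[A.3, A.5]{HomMMP} for $t=2$ and \cite[5.10(1)]{DW1} for $t=3$ (with \ref{Ext vanishes} used there only to keep the four-term sequence exact); so your direct-verification strategy is viable, but the Ext computation must be run against $N$ rather than reduced to $P_K$ and $\CA$. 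Note also the irony that the hypersurface periodicity you invoke as the key tool is exactly what forces $\Ext^2_{\mathfrak{R}}(K,K)\neq 0$ and breaks your reduction.
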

\begin{proof}
By \ref{spher prop complete} there are only two cases, $t=2$ and $t=3$.  When $t=2$, the fact that $I_\AB$ is tilting is just \cite[A.3, A.5]{HomMMP}.  When $t=3$, the argument is identical to \cite[5.10(1)]{DW1}, since although in that proof $d=3$, the fact that by \ref{Ext vanishes} $\Ext^1_R(K,K)=0$  means that \cite[(5.F)]{DW1} is still exact, so exactly the same proof works.
\end{proof}

It is the global version of \ref{complete tilting} that interests us the most. The fact that $\CA$ can be relatively spherical at all closed points, but that the value of the spherical parameter can vary, is problematic; similarly the projective dimension of $I_\AB$ can vary over the maximal ideals.  Hence we do not seek an autoequivalence condition globally in terms of one parameter. Instead, in condition \eqref{thm zariski local tilting 3} below we ask for $\Lambda_{\con}$ to belong to $\CM_{\cS} R$, which gives the required flexibility.  The following is one of our main results.
\begin{thm}\label{thm zariski local tilting}
In the setup \ref{general modifying setup}, suppose further that $R$ is complete locally a hypersurface.  Then the following are equivalent.
\begin{enumerate}
\item\label{thm zariski local tilting 1} $\CAz$ is a relatively spherical $\AB$-module for all $z\in\Max R$.
\item\label{thm zariski local tilting 2} $\pd_{\AB}\CAz<\infty$ and $\CAz\in\CM_{\cS} \mathfrak{R}$ for all $z\in\Max R$.
\item\label{thm zariski local tilting 3} $\pd_{\Lambda}\Lambda_{\con}<\infty$ and $\Lambda_{\con}\in\CM_{\cS} R$.
\end{enumerate}
If any of these conditions hold, and they are automatic if $\dim\Sing R\leq d-3$, then $I_{\con}$~is a tilting $\Lambda$-module, and $\placeholder\Lotimes{\Lambda} I_{\con}$ preserves $\Db(\mod\Lambda)$.
\end{thm}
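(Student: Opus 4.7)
The plan is to first establish the chain of equivalences $(1)\Leftrightarrow(2)\Leftrightarrow(3)$, and then to address the tilting and endofunctor claim. Since both (1) and (2) are stated pointwise over $\Max R$, the equivalence $(1)\Leftrightarrow(2)$ reduces at each closed point $z$ to the complete local version \ref{spher prop complete}, where the varying of $t$ between closed points is absorbed into the condition $\CAz\in\CM_{\cS}\mathfrak{R}$. For $(2)\Leftrightarrow(3)$, the key observation is that both finite projective dimension and the Cohen--Macaulay property (as $R$-modules) are local, detected by completion at closed points: at each $z\in\Max R$, the completion $\widehat{\Lambda}_z$ is morita equivalent to $\AB$ via the Krull--Schmidt decomposition of $\widehat{M}_z$, and under this correspondence $\widehat{\Lambda_{\con}}_z$ matches $\CAz$. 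Since morita equivalence preserves $\pd$ and, since the centre is preserved, also preserves the property of being Cohen--Macaulay as an $R$-module, the two conditions transfer freely.

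The automatic statement is then immediate: if $\dim\Sing R\leq d-3$, then since $\Lambda_{\con}$ is supported inside $\Sing R$ by \ref{support lemma}, we have $\dim_R\Lambda_{\con}\leq d-3$, and \ref{case 3} supplies both $\pd_\Lambda\Lambda_{\con}=3<\infty$ and $\Lambda_{\con}\in\CM_{d-3}R\subseteq\CM_{\cS}R$.

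For the tilting conclusion, I would verify the three axioms of a tilting module for $I_{\con}$ as a right $\Lambda$-module. Finite projective dimension of $I_{\con}$ is immediate from the short exact sequence $0\to I_{\con}\to\Lambda\to\Lambda_{\con}\to 0$ together with the hypothesis $\pd_\Lambda\Lambda_{\con}<\infty$. Self-$\Ext$ vanishing $\Ext^{>0}_\Lambda(I_{\con},I_{\con})=0$ is a local property that I would check at each $z\in\Max R$ after completion, where via morita equivalence it becomes exactly the self-$\Ext$ vanishing of $I_{\AB}$, which is included in \ref{complete tilting}. The existence of a coresolution $0\to\Lambda\to T^0\to\cdots\to T^n\to 0$ with each $T^i\in\add I_{\con}$ requires globalising the explicit constructions behind \ref{complete tilting}, namely those of \cite[A.5]{HomMMP} in the $t=2$ case and \cite[5.10]{DW1} in the $t=3$ case: both are built from minimal $\add(R\oplus M)$-approximations and syzygies of $R$-modules, which are functorial constructions and thus assemble into a single global exact sequence over $\Spec R$.

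Finally, $\placeholder\Lotimes{\Lambda}I_{\con}$ preserves $\Db(\mod\Lambda)$ once $I_{\con}$ has finite flat dimension as a left $\Lambda$-module (immediate from the first tilting axiom together with Noetherianity) and the natural left multiplication map $\Lambda\to\End_\Lambda(I_{\con})$ is an isomorphism. For the latter, I would apply $\Hom_\Lambda(\placeholder,I_{\con})$ to $0\to I_{\con}\to\Lambda\to\Lambda_{\con}\to 0$ and reduce to showing $\Hom_\Lambda(\Lambda_{\con},I_{\con})=0=\Ext^1_\Lambda(\Lambda_{\con},I_{\con})$, which I expect to follow from the Cohen--Macaulay condition on $\Lambda_{\con}$ supplied by (3) via standard depth/grade arguments comparing the grade of $\Lambda_{\con}$ to the depth of $I_{\con}$ as an $R$-module. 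The main obstacle I anticipate is the globalisation of the T3 coresolution: the local complete constructions take different shapes depending on whether the spherical parameter equals $2$ or $3$ at a given closed point, and careful patching is required across the possibly mixed-dimensional support of $\Lambda_{\con}$.
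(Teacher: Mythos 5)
Your handling of the equivalences (\ref{thm zariski local tilting 1})$\Leftrightarrow$(\ref{thm zariski local tilting 2})$\Leftrightarrow$(\ref{thm zariski local tilting 3}) and of the automatic statement coincides with the paper's: (1)$\Leftrightarrow$(2) is \ref{spher prop complete} applied pointwise, (2)$\Leftrightarrow$(3) is the fact that finite projective dimension and membership of $\CM_{\cS}$ are detected complete locally at maximal ideals, and the automatic case is \ref{case 3} via \ref{support lemma}. The divergence, and the genuine gap, is in how you establish that $I_{\con}$ is tilting. You propose to verify the coresolution axiom by ``globalising'' the explicit complete-local constructions behind \ref{complete tilting}, namely \cite[A.5]{HomMMP} and \cite[5.10(1)]{DW1}, on the grounds that they are built from minimal $\add(R\oplus M)$-approximations and syzygies and are therefore ``functorial constructions'' that ``assemble into a single global exact sequence over $\Spec R$''. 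This step would fail as stated: those constructions live over the completion $\mathfrak{R}$ at a single closed point, they depend on the Krull--Schmidt decomposition of $\widehat{M}$ (which has no global counterpart), minimal approximations are neither functorial nor compatible under localisation/completion, and — as you yourself observe — the shape of the local coresolution changes between points where the spherical parameter is $2$ and points where it is $3$, so there is no single global complex whose completions recover the local ones. The paper sidesteps this entirely: it invokes the fact that \emph{being a tilting module can be checked complete locally at maximal ideals} (see the proof of \cite[6.2]{DW1}), so that \ref{complete tilting} at each $z\in\Max R$ already yields that $I_{\con}$ is a tilting $\Lambda$-module, and no global coresolution is ever constructed. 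To rescue your route you would have to either prove such a local-to-global principle for tilting modules yourself, or build a global coresolution from (non-minimal) global approximations and then verify its exactness and $\add I_{\con}$-membership after completion — which amounts to reproving that principle.

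A secondary issue concerns the final claim that $\placeholder\Lotimes{\Lambda}I_{\con}$ preserves $\Db(\mod\Lambda)$: your assertion that finite flat dimension of $I_{\con}$ as a \emph{left} $\Lambda$-module is ``immediate from the first tilting axiom'' conflates the right-module projective dimension with the left-module structure of the bimodule $I_{\con}$, and your reduction to $\Hom_\Lambda(\Lambda_{\con},I_{\con})=\Ext^1_\Lambda(\Lambda_{\con},I_{\con})=0$ is more than is needed. The clean argument uses that $\End_\Lambda(I_{\con})\cong\Lambda$ via left multiplication (the affine case of \ref{I both sides ok}, i.e.\ \cite[6.1]{DW1}), so that once $I_{\con}$ is tilting, standard tilting theory makes $\placeholder\Lotimes{\Lambda}I_{\con}$ quasi-inverse to $\RHom_\Lambda(I_{\con},\placeholder)$, and this equivalence restricts to bounded complexes of finitely generated modules.
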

\begin{proof}
(1)$\Leftrightarrow$(2) is \ref{spher prop complete}, since $d-t=d-(d-\dim_{\mathfrak{R}}\CA)=\dim_{\mathfrak{R}}\CA$.\\  (2)$\Leftrightarrow$(3) just follows since finite projective dimension can be checked complete locally at maximal ideals, and $\Lambda_{\con}\in\CM_{\cS} R$ is again defined locally. 

The statement about the assumptions being automatic is \ref{case 3}.  The fact $I_{\con}$ is a tilting module then follows since being a tilting module can be checked complete locally (see e.g.\ the proof of \cite[6.2]{DW1}), and the complete local statement is \ref{complete tilting}.
\end{proof}

\begin{example}\label{poky out alg example}
Consider $M:=(u,x)\oplus (u,x^2)$ for $R=\mathbb{C}[u,v,x,y]/(uv-x^2y)$.  Then $\Lambda:=\End_R(R\oplus M)$ is an NCCR of $R$.  Complete locally at the origin, since $\Omega M\ncong M$, as in \ref{spher prop complete} it follows that $\CA\notin\CM_{\cS}\mathfrak{R}$.  In fact, this can be seen directly, since the minimal projective resolution of $\CA$ has the form
\[
0\to P_2^{\oplus 2}\to P_0^{\oplus 3}\oplus P_1\to P_0^{\oplus 4}\to P_1\oplus P_2\to\CA\to 0
\]
so, by inspection, $\CA$ is not relatively spherical.   Note that since $\CA\notin\CM_{\cS}\mathfrak{R}$, it follows that $\Lambda_{\con}\notin\CM_{\cS}R$.  However, the other condition in \ref{thm zariski local tilting}\eqref{thm zariski local tilting 3}, namely $\pd_{\Lambda}\Lambda_{\con}<\infty$, does hold since $\Lambda$ is an NCCR.
\end{example}
A more conceptual geometric explanation of the above example is given in \ref{poky geom example} below.

\section{Global Twist Functors}\label{global twists section}

In this section, under our most general $d$-fold contraction assumptions of \ref{key assumptions}, where additionally $f$~is crepant, we produce an endofunctor $\twist$ of $\Db(\coh X)$.  After restricting singularities, we then give a criterion for $\twist$ to be an autoequivalence, before giving our first application.  Further applications will appear in \S\ref{1d fibre last section}.

\subsection{Twist Construction}

By the definition of $\sDefAlg$ in \ref{defin sheaf of deformation alg}, there is an exact sequence of $\sTiltAlg$-bimodules
\begin{equation}\label{eqn bimodule ses}
0\to\sIdeal\to\sTiltAlg\to\sDefAlg\to 0.
\end{equation}
The bimodule $\sIdeal$ induces a functor
\[
\RsHom_{\sTiltAlg}(\sIdeal,\placeholder)\colon
\D(\Mod\sTiltAlg)\to\D(\Mod\sEnd_{\kern -1pt\sTiltAlg} \,\sIdeal),
\] 
and the following lemma, which is simply the global version of \cite[6.1]{DW1}, ensures that it furthermore yields an endofunctor of $\D(\Mod\sTiltAlg)$.

\begin{lemma}\label{I both sides ok}
Under the general assumptions of \ref{key assumptions}, suppose further that $f$ is crepant.
\begin{enumerate}
\item\label{I both sides ok 1} There is an isomorphism of sheaves of algebras $\sTiltAlg \cong \sEnd_{\kern -1pt\sTiltAlg} \,\sIdeal$.
\item\label{I both sides ok 2} Under this  isomorphism the $(\sEnd_{\kern -1pt\sTiltAlg} \,\sIdeal,  \sTiltAlg)$-bimodule structure on $\sIdeal$ coincides with the natural $\sTiltAlg$-bimodule structure.  
\end{enumerate}
\end{lemma}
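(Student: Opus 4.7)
The plan is to reduce both assertions to the affine, Zariski-local version (namely the statement $\Lambda \overset{\sim}{\to} \End_\Lambda(I_{\con})$ that appears as \cite[6.1]{DW1}) by an affine cover argument. Since $\sTiltAlg$ and $\sEnd_{\sTiltAlg}\sIdeal$ are sheaves of algebras on $Y$, and the natural map $\lambda \colon \sTiltAlg \to \sEnd_{\sTiltAlg}\sIdeal$, sending a local section $r$ to the left-multiplication endomorphism $s \mapsto rs$, is a morphism of such sheaves, it suffices to show $\lambda|_V$ is an isomorphism for each affine open $V = \Spec R \subseteq Y$. On such a $V$, setting $U = f^{-1}(V)$ and $\Lambda_V := \End_U(\vlocGen|_U)$, \ref{key assumptions}\eqref{key assumptions 1} identifies $\sTiltAlg(V)$ with $\Lambda_V^{\op}$, and \ref{affine locally it is ok} identifies $\sIdeal(V)$ with the two-sided ideal $I_{\con} = [\cO_U]$ of $\Lambda_V$, so $\lambda|_V$ becomes the standard left-multiplication map $\Lambda_V^{\op} \to \End_{\Lambda_V^{\op}}(I_{\con})$.

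The affine-local statement is proved using the crepancy assumption. By \cite[4.8]{IW5}, $\Lambda_V$ is an $R$-order (MCM as an $R$-module), and since $f$ is a projective birational contraction of normal varieties with $\Rf_* \cO_X = \cO_Y$, the locus $\nonIso$ has codimension at least two in $Y$. Combined with \ref{global cont thm}, the quotient $\Lambda_V/I_{\con}$ is supported in codimension $\geq 2$, so $I_{\con}$ and $\Lambda_V$ agree after localisation at any height-one prime. At such primes $\lambda|_V$ is visibly the identification $\Lambda_V^{\op} \overset{\sim}{\to} \End_{\Lambda_V^{\op}}(\Lambda_V)$ given by left multiplication, and reflexive extension (both source and target being reflexive $R$-modules by the $R$-order structure) upgrades this to a global isomorphism of $R$-algebras on $V$. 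Naturality in $V$ then yields (1).

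For (2), the left action of $\sEnd_{\sTiltAlg}\sIdeal$ on $\sIdeal$ is by evaluation, so under the isomorphism of (1) it transports to the action $r \cdot s := \lambda(r)(s) = rs$, which is exactly the natural left action of $\sTiltAlg$ on its two-sided ideal $\sIdeal$; the right $\sTiltAlg$-structure is preserved on the nose, since the elements of $\sEnd_{\sTiltAlg}\sIdeal$ are by definition right-$\sTiltAlg$-linear. The main obstacle I anticipate is purely bookkeeping rather than mathematical, namely keeping track of the op conventions—$\sTiltAlg \cong \vTiltAlg^{\op}$, with $\sIdeal$ originally constructed as an ideal of $\vTiltAlg$—so care is needed to verify that the left/right sides of the bimodule structure match up consistently under the identifications above; no genuinely new input is required beyond the affine statement and the codimension-two property of $\nonIso$.
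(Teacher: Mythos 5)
Your proposal is correct and takes essentially the same route as the paper: the paper defines the same left-multiplication morphism of sheaves, reduces to affine opens (using \ref{affine locally it is ok} to identify $\sIdeal(V)$ with $I_{\con}$), and then cites the proof of \cite[6.1(1)]{DW1} for precisely the argument you spell out inline --- crepancy gives $\Lambda_V\in\MCM R$ hence reflexivity, $\nonIso$ has codimension at least two, so the map is an isomorphism in codimension one and therefore an isomorphism. Part (2) is likewise treated as a formal consequence in both arguments, so no substantive difference remains beyond your (harmless) relabelling of $\Lambda_V$ and the op-conventions you already flag.
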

\begin{proof}{}
(1) There is a canonical morphism 
\begin{equation}\label{equation end tilting sheaf}
\sTiltAlg \rightarrow \sEnd_{\kern -1pt\sTiltAlg} \,\sIdeal
\end{equation}
given on any open subset $V$ of $Y$ by
\begin{align*}
\sTiltAlg(V) & \rightarrow \Hom_{\sTiltAlg(V)} (\sIdeal(V), \sIdeal(V)) \\
\lambda & \mapsto \alpha_\lambda
\end{align*}
with $\alpha_\lambda \colon i \mapsto \lambda i.$
By our assumptions \ref{key assumptions}, $\dim X\geq 2$, so since $f$ is crepant the proof of \cite[6.1(1)]{DW1} shows that this is an isomorphism for affine $V$, where \ref{affine locally it is ok} ensures that $\sIdeal(V)$ is the ideal considered in~\cite{DW1}. It follows that \eqref{equation end tilting sheaf} is an isomorphism.\\
(2) This is a formal consequence of \eqref{I both sides ok 1}, as in \cite[6.1(2)]{DW1}.
\end{proof}

Composing the above endofunctor with the equivalences
\begin{equation}
\begin{array}{c}
\begin{tikzpicture}
\node (e1) at (4,0) {$\D(\Mod\sTiltAlg)$};
\node (f1) at (10,0) {$\D(\Qcoh X)$};
\draw[<-,transform canvas={yshift=-0.5ex}] (e1) to node[below]   {$\scriptstyle G^{\RA}=\Rf_*\RsHom_{X}(\locGen,\placeholder)  $} (f1);
\draw[->,transform canvas={yshift=+0.5ex}] (e1) to  node[above] {$\scriptstyle G=f^{-1}(\placeholder) \Lotimes{f^{-1}\sTiltAlg} \locGen$} (f1);
\end{tikzpicture}
\end{array}\label{for intro 2}
\end{equation}
leads to the following definition.

\begin{defin}\label{defin geom twist} 
Under the general assumptions of \ref{key assumptions}, suppose further that $f$ is crepant.  The \emph{twist} and \emph{dual twist} endofunctors are defined to be
\[
\twist, \twist^* \colon \D(\Qcoh X) \to \D(\Qcoh X)
\]
\begin{align*}
\twist &= G \funcompose \RsHom_{\sTiltAlg}(\sIdeal,\placeholder)  \funcompose G^{\RA},\\
\twist^* &= G \funcompose (\placeholder \Lotimes{\sTiltAlg} \sIdeal) \funcompose G^{\RA}.
\end{align*}
\end{defin}
We will show that $\twist$ preserves $\Db(\coh X)$ in \ref{prop twist preserves Db}(\ref{prop twist preserves Db 2}) below, under the condition that $\sDefAlg$ is a perfect $\sTiltAlg$-module, or equivalently $\twistOb$ from  \ref{cE def} is an object in $\Perf(X)$.  To do this, we first describe some additional structure on $\twistOb$.

By definition $\twistOb = f^{-1} \sDefAlg \Lotimes{f^{-1} \sTiltAlg} \locGen$ is a complex of sheaves on $X$.  Computing this expression by resolving the second factor we see that $\twistOb\in\Db(\mod f^{-1}\sDefAlg^{\kern 0.1em \op})$.

Next, consider the following adjoint functors
\begin{equation}
\begin{array}{c}
\begin{tikzpicture}
\node (d1) at (0,0) {$\D(\Mod \sDefAlg)$};
\node (e1) at (4,0) {$\D(\Mod\sTiltAlg)$};
\node (f1) at (10,0) {$\D(\Qcoh X).$};
\draw[<-,transform canvas={yshift=-0.5ex}] (d1) to node [below]  {$\scriptstyle \RsHom_{\sTiltAlg}(\sDefAlg,\placeholder)  $} (e1);
\draw[->,transform canvas={yshift=+0.5ex}] (d1) to  node [above] {$\scriptstyle \placeholder \Lotimes{\sDefAlg} \sDefAlg$} (e1);
\draw[<-,transform canvas={yshift=-0.5ex}] (e1) to node [below]  {$\scriptstyle G^{\RA}=\Rf_*\RsHom_{X}(\locGen,\placeholder)  $} (f1);
\draw[->,transform canvas={yshift=+0.5ex}] (e1) to  node [above] {$\scriptstyle G=f^{-1}(\placeholder) \Lotimes{f^{-1}\sTiltAlg} \locGen$} (f1);
\end{tikzpicture}
\end{array}\label{for intro}
\end{equation}
By construction and assumption, $\sDefAlg$ and $\locGen$ are sheaves.  We will write $F$ for the composition of the top functors, and $F^{\RA}$ for the composition of the bottom functors. Note that $F$ and $F^{\RA}$ can be expressed easily as
\begin{align*}
F&=f^{-1}(\placeholder\Lotimes{\sDefAlg}\sDefAlg)\Lotimes{f^{-1}\sTiltAlg}\locGen \\
&\cong
f^{-1}(\placeholder)\Lotimes{f^{-1}\sDefAlg}f^{-1}\sDefAlg\Lotimes{f^{-1}\sTiltAlg}\locGen\tag{since $f^{-1}$ distributes over tensor} \\
&\cong
f^{-1}(\placeholder)\Lotimes{f^{-1}\sDefAlg}\twistOb,\\
&\\[-8pt]
F^{\RA}&=\RsHom_{\sTiltAlg}(\sDefAlg,\Rf_*\RsHom_{X}(\locGen,\placeholder))\\
&\cong\Rf_*\RsHom_{f^{-1}\sTiltAlg}(f^{-1}\sDefAlg,\RsHom_{X}(\locGen,\placeholder))\tag{by adjunction, c.f.\ \cite[(18.3.2)]{KS}}\\
&\cong\Rf_*\RsHom_{X}(f^{-1}\sDefAlg\Lotimes{f^{-1}\sTiltAlg}\locGen,\placeholder)\tag{by adjunction}\\
&=\Rf_*\RsHom_{X}(\twistOb,\placeholder).
\end{align*}

\begin{remark}
Regardless of whether $\twistOb$ is a sheaf or a complex, below and throughout when we write $\Rf_*\RsHom_{X}(\twistOb,\placeholder)$ we will mean the functor $F^{\RA}$, which takes values in $\D(\Mod\sDefAlg)$. In particular, 
$f^{-1}\Rf_*\RsHom_{X}(\twistOb,\placeholder)$ 
takes values in $\D(\Mod f^{-1}\sDefAlg)$.  
\end{remark}

\begin{prop}\label{funct triangle prop}
Under the general assumptions of \ref{key assumptions}, suppose further that $f$ is crepant.  Then $\twist$ fits into a functorial triangle
\[
f^{-1} \RDerived{f}_* \RsHom_X(\twistOb,\placeholder) \Lotimes{f^{-1} \sDefAlg} \twistOb \to \Id \to \twist \to.
\]
\end{prop}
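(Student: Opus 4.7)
The plan is to apply the contravariant functor $\RsHom_{\sTiltAlg}(\placeholder, G^{\RA}a)$ to the bimodule short exact sequence \eqref{eqn bimodule ses}, transport the resulting triangle across the equivalence $G$ of \eqref{for intro 2}, and identify each of the three vertices explicitly.

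First I would apply $\RsHom_{\sTiltAlg}(\placeholder, G^{\RA}a)$ to the distinguished triangle $\sIdeal \to \sTiltAlg \to \sDefAlg \to$ obtained from \eqref{eqn bimodule ses}. Using the canonical identification $\RsHom_{\sTiltAlg}(\sTiltAlg, G^{\RA}a) = G^{\RA}a$, this produces a functorial triangle
\[
\RsHom_{\sTiltAlg}(\sDefAlg, G^{\RA}a) \to G^{\RA}a \to \RsHom_{\sTiltAlg}(\sIdeal, G^{\RA}a) \to
\]
in $\D(\Mod\sTiltAlg)$; \ref{I both sides ok} is what guarantees that each term really carries a $\sTiltAlg$-module structure, and that the triangle lives in $\D(\Mod\sTiltAlg)$. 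Next I would apply the triangulated equivalence $G$: the relation $G G^{\RA}\simeq \Id$ turns the middle term into $a$, and by \ref{defin geom twist} the third term becomes $\twist(a)$. Functoriality in $a$ is inherited from functoriality of each of these operations.

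The remaining task is to rewrite the first term as $f^{-1}\Rf_*\RsHom_X(\twistOb,a) \Lotimes{f^{-1}\sDefAlg} \twistOb$. By the explicit formulas for $F$ and $F^{\RA}$ recorded just before the statement, that expression is $FF^{\RA}(a)$, so it is enough to produce a natural isomorphism
\[
FF^{\RA}(a) \simeq G\bigl(\RsHom_{\sTiltAlg}(\sDefAlg, G^{\RA}a)\bigr).
\]
Unwinding, this reduces to showing that $\placeholder \Lotimes{\sDefAlg} \sDefAlg$ agrees with restriction of scalars along $\sTiltAlg\to\sDefAlg$, which is immediate because $\sDefAlg$ is free of rank one over itself, so the derived tensor collapses to the underived one. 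I expect no genuine obstacle in this proof; the point requiring mild care is the bookkeeping of left and right module structures via \ref{I both sides ok}, so that the opening application of $\RsHom_{\sTiltAlg}(\placeholder, G^{\RA}a)$ indeed yields a triangle of $\D(\Mod\sTiltAlg)$-valued functors in $a$.
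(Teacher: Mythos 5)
Your proposal is correct and follows essentially the same route as the paper: apply $\RsHom_{\sTiltAlg}(\placeholder,\placeholder)$ against the bimodule sequence \eqref{eqn bimodule ses}, transport through $G$ and $G^{\RA}$, and identify the first vertex as $F\funcompose F^{\RA}$ via the observation that $\RsHom_{\sTiltAlg}(\sDefAlg,\placeholder)$ followed by $\placeholder\Lotimes{\sDefAlg}\sDefAlg$ is the composite of the left adjoint pair in \eqref{for intro}. The only cosmetic difference is that the paper works with $\RsHom_{\sTiltAlg}(-,a)$ for $a\in\D(\Mod\sTiltAlg)$ and then pre/post-composes with $G^{\RA}$ and $G$, whereas you substitute $G^{\RA}a$ from the start; these are the same argument.
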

\begin{proof}
For any object $a \in \D(\Mod\sTiltAlg)$,  simply applying $\RsHom_{\sTiltAlg}(-,a)$ to the sequence \eqref{eqn bimodule ses} gives a functorial triangle in $\D(\Mod\sTiltAlg)$
\begin{equation}\label{to be conj}
\RsHom_{\sTiltAlg}(\sDefAlg,a) \to a \to \RsHom_{\sTiltAlg}(\sIdeal,a) \to.
\end{equation}
We may reinterpret the left-hand term as 
$
\RsHom_{\sTiltAlg}(\sDefAlg,a) \Lotimes{\sDefAlg} \sDefAlg_{\sTiltAlg},
$
in other words, it is given by a composition 
of the left-hand adjoint pair in the diagram \eqref{for intro}.

Hence precomposing \eqref{to be conj} with $G^{\RA}$ from \eqref{for intro 2}, and postcomposing with $G$, gives a functorial triangle
\[
(F \funcompose F^{\RA})(a) \to a \to \twist(a) \to.
\]
Using the expressions for $F$ and $F^{\RA}$ above, the result follows.
\end{proof}

\begin{prop}\label{prop twist preserves Db}
Under the general assumptions of \ref{key assumptions}, suppose further that $f$ is crepant.  If~$\sDefAlg\in\Perf(\sTiltAlg)$, or equivalently $\twistOb\in\Perf(X)$, then
\begin{enumerate}
\item\label{prop twist preserves Db 1} $\RsHom_{\sIdeal}(\sDefAlg,\placeholder)$ preserves $\Db(\mod\sTiltAlg)$.
\item\label{prop twist preserves Db 2} $\twist$ preserves $\Db(\coh X)$.
\end{enumerate}
\end{prop}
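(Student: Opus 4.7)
The plan is to establish the equivalence of perfection conditions first, then prove (1) via the defining sequence \eqref{eqn bimodule ses} of $\sDefAlg$, and finally deduce (2) from (1) by transporting along the derived equivalence \eqref{global Db us}.

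The equivalence $\sDefAlg \in \Perf(\sTiltAlg) \Leftrightarrow \twistOb \in \Perf(X)$ is essentially formal. By Notation \ref{cE def}, $\twistOb$ is precisely the image of $\sDefAlg$ under a quasi-inverse of $G^{\RA}$. Since perfect complexes coincide with the compact objects in both $\Db(\coh X)$ and $\Db(\coh(Y,\sTiltAlg))$, and compactness is preserved under equivalences of triangulated categories, the two conditions coincide.

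For (1), apply $\RsHom_{\sTiltAlg}(-,a)$ to \eqref{eqn bimodule ses} to produce the functorial triangle
\[
\RsHom_{\sTiltAlg}(\sDefAlg,a) \to a \to \RsHom_{\sTiltAlg}(\sIdeal,a) \to .
\]
For $a \in \Db(\mod\sTiltAlg)$ the middle term is bounded with coherent cohomology by hypothesis, so it suffices to control the left term. This is a local question on $Y$, so one passes to an affine open cover $V = \Spec R$ with $\sTiltAlg(V) = \vTiltAlg(V)$ a Noetherian ring, on which $\sDefAlg|_V$ is perfect and so admits a bounded resolution by finitely generated projectives. Computing $\RsHom$ with this resolution yields a bounded complex with finitely generated cohomology, establishing that $\RsHom_{\sTiltAlg}(\sDefAlg,a) \in \Db(\mod\sTiltAlg)$, and hence via the triangle so does $\RsHom_{\sTiltAlg}(\sIdeal,a)$.

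Part (2) then follows immediately from (1) together with \ref{defin geom twist}, which expresses $\twist = G \circ \RsHom_{\sTiltAlg}(\sIdeal,\placeholder) \circ G^{\RA}$ as a composition of the $\Db$-preserving middle functor sandwiched between the quasi-inverse equivalences $G^{\RA}$ and $G$. The main (mild) obstacle throughout is purely one of bookkeeping: checking that the sheafy notion of perfection localises correctly under the affine restrictions of \ref{easy tilt lemma}, which reduces the computation to the familiar affine setting already implicitly handled in \S\ref{subsection main constr}.
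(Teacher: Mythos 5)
Your proposal is correct, and for part (1) it is in substance the paper's own argument: apply $\RsHom_{\sTiltAlg}(\placeholder,a)$ to the bimodule sequence \eqref{eqn bimodule ses}, use that $\sDefAlg\in\Perf(\sTiltAlg)$ forces $\RsHom_{\sTiltAlg}(\sDefAlg,a)$ to be bounded coherent (you add the harmless affine-local check that the paper leaves implicit), and conclude by two-out-of-three on the triangle \eqref{to be conj}. Note that you correctly read the statement of (1) as saying that $\RsHom_{\sTiltAlg}(\sIdeal,\placeholder)$ preserves $\Db(\mod\sTiltAlg)$, which is what the paper proves; the printed ``$\RsHom_{\sIdeal}(\sDefAlg,\placeholder)$'' is a typo. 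For part (2) your route is genuinely different, and slightly more direct: you write $\twist=G\funcompose\RsHom_{\sTiltAlg}(\sIdeal,\placeholder)\funcompose G^{\RA}$ as in \ref{defin geom twist} and sandwich the part-(1) functor between $G^{\RA}$ and $G$, using that these restrict to mutually inverse equivalences between $\Db(\coh X)$ and $\Db(\coh(Y,\sTiltAlg))$, i.e.\ the bounded coherent equivalence \eqref{global Db us} for the dual bundle $\locGen$ (which the paper itself invokes in \ref{cE def} and \ref{derived chase}). The paper instead runs the triangle $F\funcompose F^{\RA}\to\Id\to\twist\to$ of \ref{funct triangle prop}, checks that every functor in \eqref{for intro} preserves bounded coherence, and again applies two-out-of-three. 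The two arguments use essentially the same inputs — both ultimately need $G$ and $G^{\RA}$ to preserve bounded coherence — but yours avoids \ref{funct triangle prop} altogether, while the paper's version has the minor virtue of only asserting preservation of bounded coherence functor-by-functor rather than appealing to the restricted equivalence as such. Your compactness argument for the equivalence $\sDefAlg\in\Perf(\sTiltAlg)\Leftrightarrow\twistOb\in\Perf(X)$ (which the paper asserts without proof) should be phrased for the unbounded categories $\D(\Qcoh X)$ and $\D(\Mod\sTiltAlg)$ of \eqref{for intro 2}, where compact objects are the perfect complexes, rather than for the bounded coherent categories, but this is a cosmetic fix.
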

\begin{proof}
(1) Since $\sDefAlg\in\Perf(\sTiltAlg)$, the functor $\RsHom_{\sTiltAlg}(\sDefAlg,\placeholder)$ preserves bounded complexes.  It clearly preserves coherence, and so the result follows using the two-out-of-three property for the triangles~\eqref{to be conj}. \\
(2) Again, since $\sDefAlg\in\Perf(\sTiltAlg)$, the functor $\RsHom_{\sTiltAlg}(\sDefAlg,\placeholder)$ in \eqref{for intro} preserves bounded coherent complexes, and all the other functors in \eqref{for intro} also preserve bounded coherence.  Hence $F$ and $F^{\RA}$ preserve bounded coherence, thus so does $\twist$ by \ref{funct triangle prop}, again using the two-out-of-three property.
\end{proof}

\subsection{Conditions for Equivalence}

This subsection uses the Zariski local tilting result \ref{thm zariski local tilting} to give a condition for when  $\twist$ is an equivalence globally.  Recall that $\sDefAlg$ is a \emph{Cohen--Macaulay sheaf} if it is Cohen--Macaulay at each closed point, as defined in \S\ref{section CM}.

The following notion, a translation of \ref{def sph}, will be used.

\begin{defin}\label{def geom sph}
We say that $\sDefAlg$ is \emph{$t$-relatively spherical} for a closed point $z\in Z$ if
\[
\Ext^j_{\widehat{\sTiltAlg}_z}(\widehat{\sDefAlg}_z,T)\cong\left\{  
\begin{array}{cl}
\K&\mbox{if }j=0,t\\
0&\mbox{else},
\end{array}
\right.
\]
for all simple $\widehat{\sDefAlg}_z$-modules $T$. Here $\widehat{\sDefAlg}_z$ is the completion of the stalk of $\sDefAlg$ at $z$.
\end{defin}

To set notation, choose an affine open cover $Y=\bigcup V_i$, and for any $V=V_i$ consider
\[
\RsHom_{\sTiltAlg|_V}(\sIdeal|_V,\placeholder)  \colon \D(\Mod \sTiltAlg|_V) \to \D(\Mod \sTiltAlg|_V).
\]
As $V$ is affine, say $V=\Spec R$, we may use setup~\ref{general modifying setup}, where now $\sTiltAlg|_V$ corresponds to $\Lambda$, and $\sIdeal|_V$ to the $\Lambda$-bimodule $I_{\con}$ by~\ref{affine locally it is ok}. It follows that the above functor is simply
\[
\RHom_\Lambda(I_{\con},\placeholder) \colon \D(\Mod \Lambda) \to  \D(\Mod\Lambda).
\]

The following is the main theorem of this section.

\begin{thm}\label{main sph thm}
Under the general assumptions of \ref{key assumptions}, assume that $f$ is crepant, and $\widehat{\cO}_{Y,z}$ are hypersurfaces for all closed points $z\in\nonIso$. Then the following are equivalent.
\begin{enumerate}
\item\label{main sph thm 1} $\sDefAlg$ is a Cohen--Macaulay sheaf on $Y$, and $\twistOb$ is a perfect complex on $X$.
\item\label{main sph thm 2} $\sDefAlg$ is relatively spherical for all closed points $z\in Z$.
\end{enumerate}
If these conditions hold, and they are automatic provided that $\dim Z\leq d-3$, then the functor $\twist$ is an autoequivalence of $\Db(\coh X)$.
\end{thm}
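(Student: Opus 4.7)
The plan is to reduce everything to the Zariski local theorem \ref{thm zariski local tilting} via an affine cover. Fix a finite affine cover $Y=\bigcup V_i$ with $V_i=\Spec R_i$, set $U_i:=f^{-1}(V_i)$, and write $\Lambda_i:=\End_{U_i}(\vlocGen|_{U_i})$. By \ref{affine locally it is ok} and \ref{affine is nice}, the restrictions $\sTiltAlg|_{V_i}$, $\sIdeal|_{V_i}$, and $\sDefAlg|_{V_i}$ correspond respectively to $\Lambda_i$, the two-sided ideal $I_{\con}^{(i)}\subseteq\Lambda_i$ of \ref{contraction algebra def}, and $(\Lambda_i)_{\con}$. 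The hypersurface hypothesis at each closed point of $\nonIso$ places every chart in the setting of \ref{general modifying setup}, so that \ref{thm zariski local tilting} applies on each $V_i$.

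For the equivalence of (1) and (2): Cohen--Macaulayness of $\sDefAlg$ is checked at stalks of closed points, so it is equivalent to $(\Lambda_i)_{\con}\in\CM_{\cS}R_i$ for every $i$. Since the derived equivalence \eqref{global Db} preserves perfect objects, $\twistOb\in\Perf(X)$ is equivalent to $\sDefAlg\in\Perf(\sTiltAlg)$, which by Noetherianity amounts to $\pd_{\Lambda_i}(\Lambda_i)_{\con}<\infty$ on each chart. Thus (1) is precisely condition \ref{thm zariski local tilting}\eqref{thm zariski local tilting 3}. On the other hand, relative sphericality of $\widehat{\sDefAlg}_z$ is Morita invariant, and $\widehat{\sDefAlg}_z$ is Morita equivalent to $\CAz$ by \ref{B gives def locally 2} and the discussion in \S\ref{setting in Section 3}, so (2) is precisely condition \ref{thm zariski local tilting}\eqref{thm zariski local tilting 1}. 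Hence the equivalence (1)$\Leftrightarrow$(2) follows from \ref{thm zariski local tilting}. The automaticity under $\dim Z\leq d-3$ follows because \ref{support lemma} yields $\dim_{R_i}(\Lambda_i)_{\con}=\dim(\nonIso\cap V_i)\leq d-3$ for each $i$, so \ref{case 3} (invoked in the final sentence of \ref{thm zariski local tilting}) triggers the conditions on every chart.

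Assume now (1)/(2). By \ref{thm zariski local tilting} and \ref{complete tilting}, on each $V_i$ the module $I_{\con}^{(i)}$ is a tilting $\Lambda_i$-module with $\End_{\Lambda_i}(I_{\con}^{(i)})\cong\Lambda_i$, hence $\RHom_{\Lambda_i}(I_{\con}^{(i)},\placeholder)$ is an autoequivalence of $\Db(\mod\Lambda_i)$ with quasi-inverse $\placeholder\Lotimes{\Lambda_i}I_{\con}^{(i)}$. Globally, \ref{I both sides ok} allows us to view $(\placeholder\Lotimes{\sTiltAlg}\sIdeal,\,\RsHom_{\sTiltAlg}(\sIdeal,\placeholder))$ as an adjoint pair of endofunctors of $\D(\Mod\sTiltAlg)$, both preserving $\Db(\coh(Y,\sTiltAlg))$; for the second slot, use \ref{prop twist preserves Db}\eqref{prop twist preserves Db 1} together with the $\twistOb$-perfect hypothesis in (1). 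To prove that this pair is mutually inverse, it suffices to show that the unit and counit are sheaf isomorphisms, which can be tested on each $V_i$, where they reduce to the classical tilting unit and counit on $\Db(\mod\Lambda_i)$, already established above. Thus $\RsHom_{\sTiltAlg}(\sIdeal,\placeholder)$ is an autoequivalence of $\Db(\coh(Y,\sTiltAlg))$, and composing with the derived equivalences $G$ and $G^{\RA}$ of \eqref{for intro 2} as in \ref{defin geom twist} yields that $\twist$ is an autoequivalence of $\Db(\coh X)$.

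The least routine step is the gluing in the previous paragraph: one must verify that the sheaf-theoretic unit and counit of the $\sTiltAlg$-bimodule adjunction really do restrict on each $V_i$ to the classical tilting unit and counit attached to $I_{\con}^{(i)}$. The key enablers are the identifications $\sIdeal|_{V_i}\cong I_{\con}^{(i)}$ from \ref{affine locally it is ok} and $\sEnd_{\sTiltAlg}\sIdeal\cong\sTiltAlg$ from \ref{I both sides ok}, after which the verification is a routine unwinding of definitions, but it is the natural place for bookkeeping errors to intrude.
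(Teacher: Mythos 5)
Your overall strategy is exactly the paper's (reduce (1)$\Leftrightarrow$(2) to \ref{thm zariski local tilting} on affine charts, then deduce the autoequivalence by checking the unit and counit of the adjunction $\placeholder\Lotimes{\sTiltAlg}\sIdeal\dashv\RsHom_{\sTiltAlg}(\sIdeal,\placeholder)$ locally), but there is a genuine gap in the sentence ``the hypersurface hypothesis at each closed point of $\nonIso$ places every chart in the setting of \ref{general modifying setup}''. Setup \ref{general modifying setup} requires the affine base to be \emph{Gorenstein}, whereas \ref{key assumptions} only makes $Y$ normal, and the hypothesis of the theorem controls completions only at closed points of $\nonIso$. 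So Gorenstein-ness is guaranteed only on some open neighbourhood $N$ of $\nonIso$ (openness of the Gorenstein locus, as in the first line of the paper's proof); an arbitrary affine chart $V_i$ meeting $Y\setminus N$ is simply not in the setting of \ref{thm zariski local tilting}, and your appeal to it there is unjustified. The repair is the paper's choice of cover: take charts contained in $N$ together with charts contained in $Y\setminus\nonIso$. On the latter, $\sIdeal|_{V}=\sTiltAlg|_{V}$ (by \ref{global cont thm} and \ref{affine is nice}, since $\sDefAlg$ vanishes off $\nonIso$), so the restricted unit and counit are trivially isomorphisms and no tilting input is needed; the local tilting theorem is invoked only on charts inside $N$. (For the equivalence (1)$\Leftrightarrow$(2) the same issue is harmless, since both conditions are supported on $\nonIso$, but it must still be said.)

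A second, smaller gap: you assert that \emph{both} functors preserve $\Db(\coh(Y,\sTiltAlg))$ but only justify the $\RsHom_{\sTiltAlg}(\sIdeal,\placeholder)$ slot via \ref{prop twist preserves Db}\eqref{prop twist preserves Db 1}. Preservation of bounded coherence by $\placeholder\Lotimes{\sTiltAlg}\sIdeal$ is not formal: knowing the two functors are quasi-inverse on $\D(\Mod\sTiltAlg)$ and that one of them preserves $\Db$ does not force the other to, unless you also know the first restricts to an essentially surjective functor on $\Db$ --- which is precisely what is at stake. The paper proves this by restricting $a\Lotimes{\sTiltAlg}\sIdeal$ to the (finite) cover above: on charts in $N$ it becomes $j^*a\Lotimes{\Lambda}I_{\con}$, bounded coherent by the final clause of \ref{thm zariski local tilting}, and on charts in $Y\setminus\nonIso$ one has $I_{\con}=\Lambda$. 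Your gluing of the unit/counit to the classical tilting unit/counit on each chart is fine and is what the paper does ($j^*\eta_a=\tilde\eta_{j^*a}$ since the counit is defined locally), but the two points above need to be added for the argument to be complete.
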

\begin{proof}
Since being Gorenstein is an open condition \cite[24.6]{Matsumura}, $Y$ is Gorenstein in a neighbourhood $N$ of $Z$.  

Conditions \eqref{main sph thm 1} and \eqref{main sph thm 2} can both be checked locally.  Since $\sDefAlg$ is supported on $Z$ by \ref{global cont thm}, and recalling from \ref{cE def} that $\twistOb$ is defined as the image of $\sDefAlg$ under an equivalence, it suffices to show that they are equivalent after restricting to $N$.  But there,  the required result is \ref{thm zariski local tilting}, which also shows that the conditions are automatic provided that $\dim Z\leq d-3$.

For the final statement, since $G$ and $G^{\RA}$ are equivalences by definition, \ref{defin geom twist}, it suffices to prove that 
\[
\RsHom_{\sTiltAlg}(\sIdeal,\placeholder)\colon \Db(\mod\sTiltAlg)\to\Db(\mod\sTiltAlg)
\] 
is an equivalence. We write $\eta$ and $\tilde{\eta}$ respectively for the counits of the following adjunctions
\begin{align*}
\placeholder\Lotimes{\sTiltAlg}\sIdeal & \dashv  \RsHom_{\sTiltAlg}(\sIdeal,\placeholder) \\
\placeholder\Lotimes{\Lambda}I_{\con} & \dashv  \RHom_{\Lambda}(I_{\con},\placeholder).
\end{align*}
Consider, for $a \in \D(\Mod\sTiltAlg)$, the distinguished triangle
\[ 
\RsHom_\sTiltAlg ( \sIdeal, a ) \Lotimes{\sTiltAlg} \sIdeal \xrightarrow{\eta_a} a \longrightarrow \operatorname{Cone}(\eta_a) \longrightarrow .
\]
This restricts to each $V=V_i$ along the inclusion $j \colon V \hookrightarrow Y$, to give 
\[ 
\RHom_{\Lambda} ( I_{\con}, j^* a ) \Lotimes{\Lambda} I_{\con} \xrightarrow{j^* \eta_a} j^* a \longrightarrow j^* \operatorname{Cone}(\eta_a) \longrightarrow .
\]
By inspection, the counit is defined locally, so $j^* \eta_a = \tilde{\eta}_{j^* a}$. 

We may take our affine open cover $Y =\bigcup V_i$ to be the union of a cover of $N\supset\nonIso$, and a cover of $Y\backslash \nonIso$.
If $V \subseteq Y\backslash Z$ then $I_{\con} = \Lambda$. If, on the other hand, $V \subseteq N$ then we have that $V$ is Gorenstein, and then $\RHom_\Lambda(I_{\con},\placeholder)$ is an equivalence by \ref{thm zariski local tilting}, condition~(\ref{thm zariski local tilting 3}). In either case, it follows that $j^* \eta_a$ is an isomorphism. Hence $j^* \operatorname{Cone}(\eta_a) = 0$ for all~$V$, so $\operatorname{Cone}(\eta_a) = 0$ for all $a$, and thence $\eta$ is an isomorphism.

The argument for the unit $\epsilon$ is similar, since it too by inspection is defined locally, and so it follows from standard adjoint functor results  that
\[
\begin{array}{c}
\begin{tikzpicture}
\node (e1) at (4,0) {$\D(\Mod\sTiltAlg)$};
\node (f1) at (8,0) {$\D(\Mod\sTiltAlg)$};
\draw[<-,transform canvas={yshift=-0.5ex}] (e1) to node [below]  {$\scriptstyle \RsHom_{\sTiltAlg}(\sIdeal,\placeholder)  $} (f1);
\draw[->,transform canvas={yshift=+0.5ex}] (e1) to  node [above] {$\scriptstyle \placeholder \Lotimes{\sTiltAlg} \sIdeal$} (f1);
\end{tikzpicture}
\end{array}
\]
is an equivalence.

By \ref{prop twist preserves Db}(\ref{prop twist preserves Db 1}) we already know that $\RsHom_{\sTiltAlg}(\sIdeal,\placeholder)$ preserves $\Db(\mod\sTiltAlg)$, so it suffices to prove that $\placeholder \Lotimes{\sTiltAlg} \sIdeal$ also has this property.

Consider $a\in\Db(\mod\sTiltAlg)$, and $a \Lotimes{\sTiltAlg} \sIdeal$.  Restricting to $V \subseteq N$, we see that 
\[
j^*(a \Lotimes{\sTiltAlg} \sIdeal)\cong j^*a\Lotimes{\Lambda} I_{\con},
\]
which belongs to $\Db(\mod\Lambda)$ since $j^*a$ does and $\placeholder\Lotimes{\Lambda} I_{\con}$ preserves $\Db(\mod\Lambda)$ by~\ref{thm zariski local tilting}. On the other hand, for $V \subseteq Y\backslash Z$, we have $I_{\con} = \Lambda$, and so $\placeholder\Lotimes{\Lambda} I_{\con}$ certainly preserves $\Db(\mod\Lambda)$.  Since we may take the cover to be finite, and the restriction of $a \Lotimes{\sTiltAlg} \sIdeal$ to each piece is bounded coherent, it follows that $a \Lotimes{\sTiltAlg} \sIdeal$ is bounded coherent. 
\end{proof}

\subsection{Application to Springer Resolutions}

The following is a corollary of \ref{main sph thm}.

\begin{cor}\label{det var thm main}
Consider the Springer resolution $X\to Y$ of the variety of singular $n\times n$ matrices. Then $\twist$ is an autoequivalence of $X$.
\end{cor}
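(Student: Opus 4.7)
The plan is to reduce the statement to \ref{main sph thm} and then to invoke its ``automatic'' clause, so that no genuine spherical or Cohen--Macaulay computation is needed.

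First I would verify that the hypotheses of \ref{main sph thm} hold for the Springer resolution. The base $Y = \Spec \K[x_{ij}]/(\operatorname{det} x)$ is a hypersurface of dimension $d = n^2-1$, hence Gorenstein, and the Springer resolution $f\colon X \to Y$ is a classical crepant resolution. By Buchweitz--Leuschke--Van~den~Bergh~\cite{BLV}, $X$ carries a tilting bundle with $\cO_X$ as a summand, placing us in the setting of \ref{assumptions do hold}\eqref{assumptions do hold 1}, so the framework of \ref{key assumptions} applies. Moreover, since $Y$ is globally cut out by a single equation in $\mathbb{A}^{n^2}$, the completion $\widehat{\cO}_{Y,z}$ at every closed point is a hypersurface, so the singularity restriction in \ref{main sph thm} is satisfied.

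Next I would carry out the essential dimension count. By \ref{global cont thm}, $\nonIso$ equals the singular locus of $Y$, which consists of matrices of rank at most $n-2$. This is a classical determinantal variety of codimension $(n-(n-2))^2 = 4$ in $\mathbb{A}^{n^2}$, hence $\dim \nonIso = n^2 - 4 = d - 3$.

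With the inequality $\dim \nonIso \leq d - 3$ in hand, the final assertion of \ref{main sph thm} applies directly: via the local input \ref{case 3}, the Cohen--Macaulay and relative sphericality hypotheses are automatic, and $\twist$ is therefore an autoequivalence of $\Db(\coh X)$. I do not anticipate any genuine obstacle here: the tilting bundle is an external input from \cite{BLV}, the determinantal codimension is a standard fact, and all the remaining analytic content is absorbed into the previously established ``automatic'' clause $\dim Z \leq d-3$ of \ref{main sph thm}. The only mildly substantive check, and therefore the closest thing to a difficulty, is the identification of $\nonIso$ with $Y_{\mathrm{sing}}$: this follows since $X$ is smooth, so $f$ is automatically an isomorphism over the smooth locus of $Y$, and conversely any point of $Y_{\mathrm{sing}}$ must lie in $\nonIso$ because a birational morphism from a smooth variety cannot be an isomorphism over a singular point.
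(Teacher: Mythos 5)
Your proposal is correct and follows essentially the same route as the paper: the paper's proof likewise observes that $Y$ is a hypersurface cut out by the determinant, cites \cite{BLV} for the tilting bundle with trivial summand (placing us in \ref{assumptions do hold}\eqref{assumptions do hold 1}), and cites \cite[\S 5.1]{BLV} for the fact that $Y$ is smooth in codimension two, so the automatic clause $\dim \nonIso\leq d-3$ of \ref{main sph thm} applies. Your explicit codimension count (the rank $\leq n-2$ locus has codimension $4$ in $\mathbb{A}^{n^2}$, hence codimension $3$ in $Y$) simply replaces that citation. One small caveat: your justification that $f$ is an isomorphism over the smooth locus of $Y$ ``since $X$ is smooth'' is not a valid general principle --- the blow-up of a smooth centre in a smooth $Y$ is projective birational from a smooth variety, satisfies $\Rf_*\cO_X=\cO_Y$, and is not an isomorphism over the smooth locus. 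The inclusion $\nonIso\subseteq Y_{\mathrm{sing}}$, which is all you need for the bound $\dim \nonIso\leq d-3$, instead follows from crepancy (there is no crepant divisorial contraction over a smooth, hence terminal, point, and no nontrivial small contraction over a smooth, hence $\mathbb{Q}$-factorial, point), or more simply from the explicit description of the Springer resolution, whose fibre over a matrix of rank $n-1$ is the single point given by its kernel.
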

\begin{proof}
$Y$ is a hypersurface cut out by the determinant. It is well known that $Y$ is smooth in codimension two~\cite[\S 5.1]{BLV}, and it is one of the main results of \cite{BLV} that $X$ admits a tilting bundle with trivial summand \cite[\citetype{Theorem }C]{BLV}.
\end{proof}

\section{One-Dimensional Fibre Applications}\label{1d fibre last section}

\subsection{Relative Spherical via CM Sheaves} 
The first half of this subsection is general.  If $X$ is a Gorenstein variety with tilting bundle $\cT$, necessarily $\Lambda:=\End_X(\cT)$ is an Iwanaga--Gorenstein ring, namely it is noetherian with finite injective dimension on both the left and right \cite[4.4]{KIWY}.  It follows that $\Lambda$ with its natural bimodule structure gives rise to a duality functor 
\[
\RHom_\Lambda(-,\Lambda)\colon \Db(\mod\Lambda)\to\Db(\mod\Lambda^{\op}).
\]
On the other hand, since $X$ is Gorenstein, it too has a good duality functor given by $\RsHom_X(-,\cO_X)$, and so we can consider the following diagram.
\[
\begin{array}{c}
\begin{tikzpicture}
\node (a1) at (0,0) {$\Db(\coh X)$};
\node (a2) at (4.5,0) {$\Db(\mod \Lambda)$};
\node (b1) at (0,-1.5) {$\Db(\coh X)$};
\node (b2) at (4.5,-1.5) {$\Db(\mod \Lambda^{\op})$};
\draw[->] (a1)-- node[above] {$\scriptstyle\RHom_X(\cT,-)$} node [below] {$\scriptstyle\sim$} (a2);
\draw[->] (b1) -- node[above] {$\scriptstyle\RHom_X(\cT^*,-)$} node [below] {$\scriptstyle\sim$} (b2);
\draw[->] (a1) to node[left] {$\scriptstyle \RsHom_X(-,\cO_X)$} (b1);
\draw[->] (a2) to node[right] {$\scriptstyle \RHom_\Lambda(-,\Lambda)$} (b2);
\end{tikzpicture}
\end{array}
\]

\begin{prop}\label{objectwise iso}
Suppose that $X$ is a Gorenstein variety with tilting bundle $\cT$.  Then for all $a\in\Db(\coh X)$, there is an isomorphism
\[
\RHom_X(\cT^*,\RsHom_X(a,\cO_X))\cong \RHom_\Lambda(\RHom_X(\cT,a),\Lambda).
\]
\end{prop}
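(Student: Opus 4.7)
The plan is to show that both sides independently compute $\RHom_X(a,\cT)$, and then to check that the resulting right $\Lambda^{\op}$-module structures coincide.

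First I would unwind the left-hand side using standard adjunctions on $X$. Since $\cT$ is a vector bundle one has $\RsHom_X(\cT^*,\cO_X)\cong \cT$, and a double application of derived tensor-hom adjunction gives
\begin{align*}
\RHom_X(\cT^*,\RsHom_X(a,\cO_X))
&\cong \RHom_X(\cT^*\Lotimes{X}a,\cO_X)\\
&\cong \RHom_X(a,\RsHom_X(\cT^*,\cO_X))\\
&\cong \RHom_X(a,\cT).
\end{align*}
Each step is a standard derived adjunction, manifestly natural in $a$.

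Next, since $\cT$ is tilting, $\RHom_X(\cT,-)\colon\Db(\coh X)\to\Db(\mod\Lambda)$ is a triangulated equivalence sending $\cT$ to $\Lambda$. Fully faithfulness, applied with second argument $\cT$, then gives
\[
\RHom_X(a,\cT)\cong\RHom_\Lambda\bigl(\RHom_X(\cT,a),\,\RHom_X(\cT,\cT)\bigr)=\RHom_\Lambda(\RHom_X(\cT,a),\Lambda).
\]
Composing these two chains produces the claimed isomorphism.

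The remaining point, which I expect to be the only real subtlety, is verifying that both sides inherit the same right $\Lambda^{\op}$-module structure. On the left this structure arises from $\cT^*$ via $\End_X(\cT^*)\cong\Lambda^{\op}$ acting on the first argument of $\RHom_X(\cT^*,-)$; on the right it arises from the left $\Lambda$-action on the bimodule $\Lambda$ in the target slot of $\RHom_\Lambda(-,\Lambda)$. Both are induced by functoriality in $\cT$: on the left by precomposing endomorphisms of $\cT^*$ (equivalently, dualizing endomorphisms of $\cT$), on the right by postcomposing their images under the identification $\RHom_X(\cT,\cT)=\Lambda$. Chasing an endomorphism of $\cT$ through the three natural isomorphisms above, using that each of the tensor-hom adjunctions on $X$ and the tilting $\RHom_\Lambda$-isomorphism is compatible with postcomposition in the output, shows the two actions agree. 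No deeper input is required beyond these naturality checks, so the main obstacle is purely bookkeeping.
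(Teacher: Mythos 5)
Your proposal is correct and follows essentially the same route as the paper: both sides are identified with $\RHom_X(a,\cT)$, the right-hand side via the (dg-enhanced) fully faithfulness of the tilting equivalence $\RHom_X(\cT,\placeholder)$ applied with second argument $\cT$, exactly as in the paper. The only cosmetic difference is that you obtain $\RHom_X(\cT^*,\RsHom_X(a,\cO_X))\cong\RHom_X(a,\cT)$ by a chain of tensor--hom adjunctions and $\cT^{**}\cong\cT$, whereas the paper gets the same identification by applying the Gorenstein duality $\RsHom_X(\placeholder,\cO_X)$ to morphism spaces; your added remarks on matching the $\Lambda^{\op}$-structures are the same naturality bookkeeping the paper leaves implicit.
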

\begin{proof}
Since $\RHom_X(\cT,-)$ is a dg equivalence, there is an isomorphism 
\[
\RHom_X(a,b)\cong\RHom_\Lambda(\RHom_X(\cT,a),\RHom_X(\cT,b))
\]
for all $a,b\in\Db(\coh X)$.
Hence setting $b=\cT$ gives  an isomorphism
\begin{equation}
\RHom_X(a,\cT)\cong\RHom_\Lambda(\RHom_X(\cT,a),\Lambda)\label{funct pss 1}
\end{equation}
for all $a\in\Db(\coh X)$. 

Next,  since $X$ is Gorenstein $(-)^\vee:=\RsHom_X(-,\cO_X)$ is a duality on $\Db(\coh X)$, and further since $\cT^*=\cT^\vee$, there is an isomorphism
\begin{equation}
\RHom_X(a,\cT)\cong \RHom_X(\cT^*,a^\vee) \label{funct pss 2}
\end{equation}
for all $a\in\Db(\coh X)$.  Combining \eqref{funct pss 1} and \eqref{funct pss 2} yields the result.
\end{proof}

The following is the main result of this subsection, and is specific to the setting of one-dimensional fibres.  It relates a homological condition about $\sDefAlg$ on the base $Y$ to a homological condition about $\twistOb$ at closed points of $X$. 

\begin{thm}\label{up and down CM}
In the setting of \ref{assumptions do hold}(\ref{assumptions do hold 2}), assume that $Y$ is Gorenstein in a neighbourhood of $Z$, and $f$ is crepant.  Then the following conditions are equivalent.
\begin{enumerate}
\item $\sDefAlg\in\CM_{\cS}Y$.
\item For all $y\in \nonIso$, we have $\sDefAlg_y\in\CM_{\dim \nonIso_y}\cO_{Y,y}$.
\item For all $y\in \nonIso$, and all $x\in f^{-1}(y)$, we have $\twistOb_x\in\CM_{\dim \nonIso_y+1}\cO_{X,x}$.
\end{enumerate}
In particular, if these conditions hold, then above every  $y\in\nonIso$, the exceptional locus is equidimensional of dimension $\dim Z_y+1$.
\end{thm}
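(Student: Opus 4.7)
The problem is Zariski-local on $Y$, so after shrinking we may work on an affine open $V = \Spec R \subset Y$ inside the Gorenstein neighbourhood of $Z$, placing us in the setup of \ref{general modifying setup}. Via \ref{affine is nice}, $\sDefAlg|_V$ is identified with the $R$-module $\Lambda_{\con}$, and by \ref{sheaf in deg 0}, $\twistOb$ is a sheaf on $U := f^{-1}(V)$ corresponding to $\Lambda_{\con}$ across the tilting equivalence $\RHom_U(\locGen|_U, \placeholder) \colon \Db(\coh U) \simeq \Db(\mod \Lambda)$. The equivalence $(1) \Leftrightarrow (2)$ is then a definition-chase: $\sDefAlg \in \CM_{\cS} Y$ means CM at every closed point (by \S\ref{section CM}), and since $\Supp \sDefAlg = Z$ by \ref{global cont thm} we have $\dim_{\cO_{Y,y}} \sDefAlg_y = \dim Z_y$, so the CM condition reads $\depth_{\cO_{Y,y}} \sDefAlg_y = \dim Z_y$; extending from closed to non-closed points uses standard behaviour of CM modules under localization on their support.

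The main work is $(2) \Leftrightarrow (3)$, bridging CM on $Y$ and CM on $X$. Since $X$ and $Y$ are both Gorenstein ($X$ because $f$ is crepant over Gorenstein $V$), local duality translates each CM condition into single-degree concentration of the corresponding derived dual. The plan is to compare these via Proposition \ref{objectwise iso} applied to $\cT = \locGen$ and $a = \twistOb$, giving
\[
\RHom_U(\vlocGen|_U, \RsHom_U(\twistOb, \cO_U)) \cong \RHom_\Lambda(\Lambda_{\con}, \Lambda).
\]
The right-hand side encodes concentration of $\RHom_R(\Lambda_{\con}, R)$, using that $\Lambda \in \MCM R$ over Gorenstein $R$ to match $\RHom_\Lambda$-concentration with $\RHom_R$-concentration; by local duality on $R$, this controls condition (2). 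The left-hand side, via the derived equivalence induced by $\vlocGen$ together with local duality on the Gorenstein $U$, controls single-degree concentration of $\RsHom_U(\twistOb, \cO_U)$ and hence the CM property of $\twistOb_x$ at points $x$ above $y$, which is condition (3). The hard part will be tracking the degree shifts through the two local dualities and the derived equivalence carefully: the $+1$ in $\dim Z_y + 1$ arises from the one-dimensional fibre contribution of $\Rf_*$ (equivalently, $R^1 f_*$ can a priori contribute across the tilting equivalence), and the identification $\dim_{\cO_{X,x}} \twistOb_x = \dim Z_y + 1$ follows from \ref{supp equal excp} combined with the one-dimensional fibre hypothesis.

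The final equidimensionality assertion is then a direct consequence: assuming (3), for every $x$ in the exceptional locus above $y \in \nonIso$, \ref{supp equal excp} places $x$ in $\Supp \twistOb$, and the CM property forces $\twistOb_x$ to be equidimensional without embedded primes, of local dimension $\dim Z_y + 1$. Since this equals the local dimension of the exceptional locus at $x$, the exceptional fibre above $y$ is equidimensional of dimension $\dim Z_y + 1$, as claimed.
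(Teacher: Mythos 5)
Your outline does follow the paper's route (localise via \ref{affine is nice}, apply \ref{objectwise iso} with $\cT=\locGen$, and compare dualities on the two sides), and both the (1)$\Leftrightarrow$(2) step and the closing equidimensionality argument are fine. However, the heart of (2)$\Leftrightarrow$(3) is exactly the step you defer as ``tracking the degree shifts carefully'', and as written this is a genuine gap rather than a routine verification. Two ingredients are missing. First, passing between concentration of $\RHom_{R_y}((\Lambda_{\con})_y,R_y)$ and of $\RHom_{\Lambda_y}((\Lambda_{\con})_y,\Lambda_y)$ does not follow from $\Lambda\in\MCM R$ alone: for a general maximal Cohen--Macaulay order these differ, and one needs the symmetry $\Hom_R(\Lambda,R)\cong\Lambda$, which comes from crepancy (the paper uses that $\Lambda_y$ is singular Calabi--Yau and invokes \cite[3.4]{IR}). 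Second, and more seriously, the uniform $+1$ shift is not established by observing that $R^1f_*$ ``can a priori contribute''. The paper pins it down by a perverse-sheaf argument: since $\vlocGen=\locGen^*$ progenerates $\PerOne(X_y,R_y)$, the degree-zero concentration of $\RHom_{X_y}(\vlocGen|_{X_y},\RsHom_{X_y}(\twistOb|_{X_y},\cO_{X_y})[d-k])$ is equivalent to $\RsHom_{X_y}(\twistOb|_{X_y},\cO_{X_y})[d-k]\in\PerOne(X_y,R_y)$; then $\Rf_*\RsHom_{X_y}(\twistOb|_{X_y},\cO_{X_y})=0$ by Grothendieck duality, crepancy and $\Rf_*\twistOb=0$ from \ref{derived chase}, so \ref{Per lemma track}\eqref{Per lemma track 1} forces this object to sit in cohomological degree $-1$. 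That is precisely where the shift by exactly one comes from, and without it you cannot conclude that the dual of $\twistOb$ is concentrated in a single degree precisely when (2) holds, nor that the shift is one at every point.

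Relatedly, your claim that $\dim_{\cO_{X,x}}\twistOb_x=\dim \nonIso_y+1$ ``follows from \ref{supp equal excp} combined with the one-dimensional fibre hypothesis'' is false as an unconditional statement: \ref{supp equal excp} only gives that $\dim\twistOb_x$ equals the local dimension of the exceptional locus at $x$, which can be strictly smaller than $\dim \nonIso_y+1$ (see the outer resolutions in \ref{poky geom example}, where a curve pokes out of the exceptional surface over the origin). Equality at all $x$ over $y$ is a \emph{consequence} of the Cohen--Macaulay condition in (3) --- indeed this is exactly the theorem's final assertion --- so it cannot be fed in as an input to the degree bookkeeping.
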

\begin{proof}
(1)$\Leftrightarrow$(2) This is the definition of $\CM_{\cS}Y$, together with the fact that $\dim Z_y=\dim \sDefAlg_y$ by the contraction theorem \ref{global cont thm}.\\
(2)$\Leftrightarrow$(3) For $y\in Z$, choose a Gorenstein affine neighbourhood $V=\Spec R$ of $y$.  To simplify notation, write $\Lambda=\Lambda_V$, $k=\dim Z_y$ and $d=\dim R_y$. 

Since $f$ is crepant, $\Lambda_y$ is a maximal CM $R_y$-module \cite[4.14]{IW5}, so necessarily it is singular Calabi--Yau \cite[2.22(2)]{IW4}.  Hence
\begin{equation}
\Ext^i_{R_y}(M,R_y)\cong\Ext^i_{\Lambda_y}(M,\Lambda_y)\label{IR iso}
\end{equation}
by \cite[3.4]{IR}, for all $M\in\mod\Lambda_y$ and all $i\geq 0$.  Thus, 
\begin{align*}
\sDefAlg_y\in \CM_{k}\cO_{Y,y}
&\stackrel{\scriptsize\mbox{\ref{affine is nice}}}{\iff}
(\Lambda_{\con})_y\in\CM_{k}R_y\\
&\iff
 \RHom_{R_y}((\Lambda_{\con})_y,R_y)\mbox{ is concentrated in degree }d-k\\
 &\iff \RHom_{\Lambda_y}((\Lambda_{\con})_y,\Lambda_y)\mbox{ is concentrated in degree }d-k.
\end{align*}
Now there is a chain of isomorphisms
\begin{align*}
 \RHom_{\Lambda_y}((\Lambda_{\con})_y,\Lambda_y)&\cong
\RHom_{\Lambda_y}(\RHom_{X_y}(\locGen|_{X_y},\twistOb|_{X_y}),\Lambda_y)\\
&\cong \RHom_{X_y}(\locGen|_{X_y}^*,\RsHom_{X_y}(\twistOb|_{X_y},\cO_{X_y})),
\end{align*}
where the first follows since $\Lambda_{\con}$ corresponds across the equivalence to $\twistOb$, and the second follows by \ref{objectwise iso}  since $X_y$ is Gorenstein (since $\Spec R_y$ is), and $\locGen|_{X_y}$ is a progenerator of $\Per(X_y,R_y)$, so it is tilting on $X_y$, with endomorphism ring $\Lambda_y$ (e.g.\ \cite[4.3(2)]{IW5}).   Combining the above, we see that $\sDefAlg_y\in \CM_{k}\cO_{Y,y}$ if and only if
\begin{align*}
\RHom_{X_y}(\locGen|_{X_y}^*,\RsHom_{X_y}(\twistOb|_{X_y},\cO_{X_y})[d-k])\mbox{ is concentrated in degree }0.
\end{align*}
But now $\locGen|_{X_y}$ progenerates $\Per(X_y,R_y)$, and its dual progenerates $\PerOne(X_y,R_y)$, and hence the above condition holds if and only if
\begin{align*}
\RsHom_{X_y}(\twistOb|_{X_y},\cO_{X_y})[d-k]\in\PerOne(X_y,R_y).
\end{align*}
But by crepancy and Grothendieck duality
\begin{align*}
\Rf_*\RsHom_{X_y}(\twistOb|_{X_y},\cO_{X_y})&\cong\Rf_*\RsHom_{X_y}(\twistOb|_{X_y},f^!\cO_{R_y}) \\
& \cong
\RHom_{R_y}(\Rf_*\twistOb|_{X_y},\cO_{R_y})
\stackrel{\scriptsize\mbox{\ref{derived chase}}}{=} 
0,
\end{align*}
so it follows from \ref{Per lemma track}\eqref{Per lemma track 1} that $\sDefAlg_y\in\CM_{k}\cO_{Y,y}$ if and only if 
\[
\RsHom_{X_y}(\twistOb|_{X_y},\cO_{X_y})[d-(k+1)] \mbox{ is concentrated in degree }0,
\] 
which holds if and only if $\twistOb_x\in\CM_{k+1}\cO_{X_y,x}$ for all $x\in f^{-1}(y)$.  Since $\cO_{X_y,x}\cong \cO_{X,x}$, the result follows.

Since $\Supp_X\twistOb$ equals the exceptional locus by \ref{supp equal excp}, it follows for all $x\in f^{-1}(y)$ that $\dim\twistOb_x$ equals the dimension of the exceptional locus at the point $x$. Hence the condition $\twistOb_x\in\CM_{\dim Z_y+1}\cO_{X,x}$ forces the exceptional locus to have dimension $\dim Z_y+1$ at all points $x$ above $y$, and so the last claim follows.
\end{proof}

\begin{cor}\label{thm 1d setup updown}
In the one-dimensional fibre setting of \ref{assumptions do hold}(\ref{assumptions do hold 2}), assume that $f$ is crepant, and $\widehat{\cO}_{Y,z}$ are hypersurfaces for all closed points $z\in\nonIso$.  Then the following are equivalent.
\begin{enumerate}
\item\label{thm 1d setup updown 0} $\sDefAlg$ is relatively spherical for all closed points $z\in Z$. 
\item\label{thm 1d setup updown 1} $\sDefAlg\in\Perf(\sTiltAlg)$ and $\sDefAlg\in\CM_{\cS} Y$.
\item\label{thm 1d setup updown 2} $\twistOb\in\Perf(X)$ and $\sDefAlg\in\CM_{\cS}Y$.
\item\label{thm 1d setup updown 3} $\twistOb\in\Perf(X)$, and for all $y\in \nonIso$ and $x\in f^{-1}(y)$, we have $\twistOb_x\in\CM_{\dim \nonIso_y+1}\cO_{X,x}$.
\end{enumerate}
If these conditions hold, and they are automatic provided that $\dim Z\leq d-3$, then the functor $\twist$ is an autoequivalence of $\Db(\coh X)$.
\end{cor}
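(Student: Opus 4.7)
The plan is to deduce this corollary directly from the two main theorems already established in the section, namely Theorem~\ref{main sph thm} and Theorem~\ref{up and down CM}. The hypotheses have been arranged so that both theorems apply, and the four conditions in the corollary are their various local/global reformulations.

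First, I would verify that Theorem~\ref{main sph thm} applies. The setting \ref{assumptions do hold}\eqref{assumptions do hold 2} implies the general assumptions of \ref{key assumptions}, crepancy is assumed, and the hypersurface condition at closed points of $Z$ is exactly the remaining hypothesis. Thus Theorem~\ref{main sph thm} directly yields (1)~$\Leftrightarrow$~(3), together with the autoequivalence conclusion and the automaticity of these conditions when $\dim Z \leq d-3$.

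Second, for (2)~$\Leftrightarrow$~(3), I would observe that the equivalence $\Db(\coh X) \simeq \Db(\coh(Y, \sTiltAlg))$ of \eqref{global Db us us} (or rather \eqref{global Db us}) restricts to an equivalence on perfect subcategories, since it is given by a tilting bundle. By Notation~\ref{cE def}, $\twistOb$ corresponds to $\sDefAlg$ under this equivalence, so $\sDefAlg \in \Perf(\sTiltAlg)$ if and only if $\twistOb \in \Perf(X)$.

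Third, for (3)~$\Leftrightarrow$~(4), I would invoke Theorem~\ref{up and down CM}. The main verification is that $Y$ is Gorenstein in a neighbourhood of $Z$. Since hypersurfaces are Gorenstein and the Gorenstein locus $U \subseteq Y$ is open \cite[24.6]{Matsumura}, every closed point of $Z$ lies in $U$. As $Y$ is a variety, the closed subset $Z \setminus U$ would, if nonempty, contain a closed point; so $Z \subseteq U$. With crepancy already assumed, Theorem~\ref{up and down CM} applies to give (3)~$\Leftrightarrow$~(4).

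The only step beyond citing is this openness-of-Gorenstein-locus bookkeeping, and it is the mildest possible obstacle: the substance of the corollary is entirely contained in the preceding two theorems, and our task is simply to observe that the hypotheses match and the perfectness condition transfers cleanly across the tilting equivalence.
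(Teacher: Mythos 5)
Your argument is correct and is essentially the paper's own proof: the paper likewise notes that the hypersurface hypothesis together with openness of the Gorenstein locus \cite[24.6]{Matsumura} forces $Y$ to be Gorenstein near $\nonIso$, that $\twistOb$ is by \ref{cE def} the image of $\sDefAlg$ under the tilting equivalence (so perfectness transfers), and then deduces the equivalences, the automaticity, and the autoequivalence statement from \ref{up and down CM} and \ref{main sph thm}. Your write-up just makes explicit the matching of conditions that the paper leaves implicit.
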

\begin{proof}
The assumptions force $Y$ to be Gorenstein in a neighbourhood of $Z$ \cite[24.6]{Matsumura}.  Recalling from \ref{cE def} that $\twistOb$ is defined as the image of $\sDefAlg$ under an equivalence, the equivalence of (\ref{thm 1d setup updown 0})--(\ref{thm 1d setup updown 3}) follows from~\ref{up and down CM}. The rest follows from \ref{main sph thm}.
\end{proof}

\begin{example}\label{poky geom example}
Consider $R=\mathbb{C}[u,v,x,y]/(uv-x^2y)$, and $Y=\Spec R$.  Then $Y$ has three crepant resolutions, sketched below.
\[
\begin{array}{c}
\begin{tikzpicture}
%%%LEFT RESOLUTION
\node at (-3,0) {\begin{tikzpicture}[scale=0.6]
\coordinate (T1) at (1.7,2.5);
\coordinate (T) at (1.9,1.8);
\coordinate (TM) at (1.92,1.7);
\coordinate (B) at (2.1,0.9);
%\draw[red,line width=0.5pt] (T) to [bend left=25] (TM);
\draw[red,line width=0.5pt] (T1) to [bend left=25] (TM);
\foreach \y in {0,0.2,0.4,...,1}{ 
\draw[line width=0.5pt,red] ($(T)+(\y,0)$) to [bend left=25] ($(B)+(\y,0)$);
\draw[line width=0.5pt,red] ($(T)+(-\y,0)$) to [bend left=25] ($(B)+(-\y,0)$);;}
\draw[color=blue!60!black,rounded corners=6pt,line width=0.75pt] (0.5,0) -- (1.5,0.3)-- (3.6,0) -- (4.3,1.5)-- (4,3.2) -- (2.5,2.7) -- (0.2,3) -- (-0.2,2)-- cycle;
\end{tikzpicture}};
%%%MIDDLE RESOLUTION
\node at (0,0) {\begin{tikzpicture}[scale=0.6]
\coordinate (T) at (1.9,2);
\coordinate (TM) at (2.12-0.05,1.5-0.1);
\coordinate (BM) at (2.12-0.09,1.5+0.12);
\coordinate (B) at (2.1,1);
\draw[red,line width=0.5pt] (T) to [bend left=25] (TM);
\draw[red,line width=0.5pt] (BM) to [bend left=25] (B);
\foreach \y in {0.2,0.4,...,1}{ 
\draw[line width=0.5pt,red] ($(T)+(\y,0)+(0.02,0)$) to [bend left=25] ($(B)+(\y,0)+(0.02,0)$);
\draw[line width=0.5pt,red] ($(T)+(-\y,0)+(-0.02,0)$) to [bend left=25] ($(B)+(-\y,0)+(-0.02,0)$);;}
\draw[color=blue!60!black,rounded corners=6pt,line width=0.75pt] (0.5,0) -- (1.5,0.3)-- (3.6,0) -- (4.3,1.5)-- (4,3.2) -- (2.5,2.7) -- (0.2,3) -- (-0.2,2)-- cycle;
\end{tikzpicture}};
%%%RIGHT RESOLUTION
\node at (3,0) {\begin{tikzpicture}[scale=0.6]
\coordinate (T) at (1.9,2.2);
\coordinate (B) at (2.1,1.3);
\coordinate (BM) at (2.07,1.45);
\coordinate (B1) at (2.3,0.65);
\draw[red,line width=0.5pt] (BM) to [bend left=25] (B1);
\foreach \y in {0,0.2,0.4,...,1}{ 
\draw[line width=0.5pt,red] ($(T)+(\y,0)$) to [bend left=25] ($(B)+(\y,0)$);
\draw[line width=0.5pt,red] ($(T)+(-\y,0)$) to [bend left=25] ($(B)+(-\y,0)$);;}
\draw[color=blue!60!black,rounded corners=6pt,line width=0.75pt] (0.5,0) -- (1.5,0.3)-- (3.6,0) -- (4.3,1.5)-- (4,3.2) -- (2.5,2.7) -- (0.2,3) -- (-0.2,2)-- cycle;
\end{tikzpicture}};
%%%
\node at (0,-2) {\begin{tikzpicture}[scale=0.6]
\draw [red] (1.1,0.75) -- (3.1,0.75);
\draw[color=blue!60!black,rounded corners=6pt,line width=0.75pt] (0.5,0) -- (1.5,0.15)-- (3.6,0) -- (4.3,0.75)-- (4,1.6) -- (2.5,1.35) -- (0.2,1.5) -- (-0.2,1)-- cycle;
%%Label 
\node at (0.75,0.75) {${\scriptstyle \nonIso}$};

\end{tikzpicture}};
%%%
\draw[->, color=blue!60!black] (0,-1) -- (0,-1.5);
\draw[->, color=blue!60!black] (-2,-1) -- (-1.5,-1.5);
\draw[->, color=blue!60!black] (2,-1) -- (1.5,-1.5);
%%%LABELS
\node at (-1.75,-2) {$Y$};
\end{tikzpicture}
\end{array}
\]
Each gives a thickening of $\nonIso$, which we write $\sDefAlg_1$, $\sDefAlg_2$, and $\sDefAlg_3$, respectively.  Above the origin, the exceptional locus of the outer two resolutions is not equidimensional of dimension two, since in both cases there is a curve poking out of the surface.  Thus, by \ref{up and down CM} and \ref{thm 1d setup updown}, $\sDefAlg_1$ and $\sDefAlg_3$ are not relatively spherical at the origin.  Note that $\Lambda$ in \ref{poky out alg example} is derived equivalent to the left-hand resolution, and so the failure of the exceptional locus to be equidimensional  explains geometrically why $\sDefAlg_1=\Lambda_{\con}\notin\CM_{\cS}R$  in \ref{poky out alg example}.

The exceptional locus of the middle resolution is equidimensional, but this does not guarantee that $\sDefAlg_2\in\CM_{\cS}Y$.  However, to see that this indeed holds, note that the middle resolution is derived equivalent to $\Lambda_2=\End_R(R\oplus N)$, where $N=(u,x)\oplus (u,xy)$.  Since $\Omega N\cong N$, applying $\Hom_R(N,\placeholder)$ to the exact sequence
\[
0\to N\to R^4\to N\to 0
\]
gives an exact sequence
\[
0\to P_1\to P_0^{\oplus 4}\to P_1\to \sDefAlg_2\to 0.
\]
Completing the above at every closed point of $\nonIso$, we see that $\sDefAlg_2$ is $2$-relatively spherical at each closed point, so as in \ref{thm zariski local tilting}, $\pd_{\Lambda_2}\sDefAlg_2=2$ and $\sDefAlg_2\in\CM_{\cS}R$.  Thus $\twist$ is an autoequivalence on the derived category of the middle resolution.
\end{example}

\subsection{The Single Curve Fibre Case}\label{div to curve section}  In the case when there is a single curve in each fibre, and $X$ is smooth, we show here that the assumptions in \ref{thm 1d setup updown} hold.  This can arise in the setting of moduli of simple sheaves.

The following result covers both divisor-to-curve contractions and flops.

\begin{thm}\label{one curve thm}
In the one-dimensional fibre setting of \ref{assumptions do hold}(\ref{assumptions do hold 2}), suppose that $d=3$, $X$ is smooth, $Y$ is Gorenstein, and $f$ is crepant such that every reduced fibre above a closed point in $Z$ contains precisely one irreducible curve.  Then 
\begin{enumerate}
\item\label{one curve thm 1} $\sDefAlg_z\in\CM_{\cS} \cO_{Y,z}$ for all closed points $z\in Z$.
\item\label{one curve thm 2} $\sDefAlg\in\CM_{\cS} Y$.
\item\label{one curve thm 3}  $\twist$ is an autoequivalence of $X$. 
\end{enumerate}
\end{thm}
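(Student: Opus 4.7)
The plan is to deduce~(3) from Corollary~\ref{thm 1d setup updown} given~(1)--(2), to obtain~(2) from~(1) by the very definition of $\CM_\cS$ on a sheaf, and to prove~(1) by a complete local analysis at each closed point $z\in Z$.

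First I would verify the standing hypotheses of~\ref{thm 1d setup updown}. Since $X$ is smooth and $Y$ is Gorenstein of dimension~$3$ admitting a crepant resolution, Reid's classification of Gorenstein canonical $3$-fold singularities forces $\widehat{\cO}_{Y,z}$ to be $cDV$, and in particular a hypersurface, for every closed $z\in Y$. Furthermore, $\twistOb$ is a sheaf by~\ref{sheaf in deg 0} and $X$ is smooth, so $\twistOb$ is automatically perfect. Thus, once~(1) is proved, \ref{thm 1d setup updown} applies and yields~(3); part~(2) is then just~(1) repackaged, so only~(1) requires genuine work.

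To prove~(1), I would work complete locally at a closed $z\in Z$. The single-curve assumption, combined with Krull--Schmidt, forces the decomposition $\widehat{M}=\mathfrak{R}^{a_0}\oplus K^{a_1}$ to have $K = M_1$ a \emph{single} indecomposable MCM $\mathfrak{R}$-module. By~\ref{B gives def locally 2}, $\widehat{\sDefAlg}_z$ is morita equivalent to $\CAz = \AB/[\mathfrak{R}]$ with $\AB = \End_\mathfrak{R}(\mathfrak{R}\oplus K)$, and the Cohen--Macaulay property over the commutative base $\mathfrak{R}$ is morita invariant, so it suffices to show $\CAz\in\CM_\cS\mathfrak{R}$. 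If $\dim Z_z = 0$ this is the automatic codimension-$3$ case of~\ref{thm zariski local tilting}. If $\dim Z_z = 1$, the key is to establish $\Omega K \cong K$ as $\mathfrak{R}$-modules; the proof of~\ref{spher prop complete} (case $t=2$) then supplies the $2$-periodic minimal projective resolution
\[
\cdots \to P_K \to P_0^{\oplus a} \to P_K \to \CAz \to 0,
\]
giving $\pd_\AB \CAz = 2$ and hence $\depth_\mathfrak{R}\CAz = d-2 = 1$ by Auslander--Buchsbaum. Combined with $\dim_\mathfrak{R} \CAz = \dim Z_z = 1$ via~\ref{global cont thm}, this yields $\CAz\in\CM_1\mathfrak{R}$, and thence (1).

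The principal obstacle is establishing $\Omega K \cong K$ in the single-curve, $\dim Z_z = 1$ setting. The hypersurface property of $\mathfrak{R}$ already gives $\Omega^2 K \cong K$ via matrix factorisation, but the further collapse to $\Omega K \cong K$ requires the single-curve hypothesis to rule out the conifold-type behaviour in which $\Omega$ swaps two distinct indecomposable MCM modules. Geometrically, the point is that a single curve $C\subset f^{-1}(z)$ has no ``partner'' curve under flop, so $K$ should be self-dual under $\Omega$; making this rigorous, presumably by combining the smoothness of $X$ with an analysis of the exceptional locus via Van~den~Bergh's construction of $\vlocGen$ in~\ref{assumptions do hold}\eqref{assumptions do hold 2}, is the technical heart of the argument.
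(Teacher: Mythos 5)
There are two genuine gaps. First, your verification of the hypersurface hypothesis of \ref{thm 1d setup updown} is not correct as stated: Reid's classification \cite{Pagoda} says that \emph{terminal} Gorenstein $3$-fold singularities are isolated cDV points, but in the divisor-to-curve situation treated here $Y$ is singular along a curve, so its points are not terminal, and Gorenstein \emph{canonical} $3$-fold singularities need not be cDV or even hypersurfaces (e.g.\ cones over anticanonically embedded del Pezzo surfaces). Your argument never uses the single-curve hypothesis at this stage, which is exactly what is needed: the paper deduces hypersurface-ness by observing that one irreducible curve forces $\CAz$ to be local, and then invoking \ref{loc D and hyper} (which rests on the mutation-involution and complexity argument, using $d=3$ and $X$ smooth).

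Second, and more seriously, the heart of your proof of (\ref{one curve thm 1}) --- the isomorphism $\Omega K\cong K$ when $\dim Z_z=1$ --- is left unproved; you acknowledge it as ``the technical heart'' and only gesture at a possible argument. Without it your route via the $2$-periodic resolution of \ref{spher prop complete} does not get off the ground (and note that in \ref{spher prop complete} the implication runs the other way: $\Omega K \cong K$ is \emph{deduced from} $\pd_\AB\CA=2$ via \cite[A.3]{HomMMP}, not used to prove it). The paper avoids this issue entirely with a different, shorter argument: since $X$ is smooth, $\pd_\AB\CAz<\infty$, so \cite[6.19(4)]{IW4} gives $\id_{\CAz}\CAz\leq 1$; since $\CAz$ is local (single curve again), Ramras \cite[2.15]{Ramras} yields
\[
\depth_{\mathfrak{R}}\CAz=\dim_{\mathfrak{R}}\CAz=\id_{\CAz}\CAz,
\]
hence $\CAz\in\CM_{\cS}\mathfrak{R}$ uniformly in both the $\dim Z_z=0$ and $\dim Z_z=1$ cases, with no case split and no need for $\Omega K\cong K$ (which holds only a posteriori). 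Your deductions of (\ref{one curve thm 2}) from (\ref{one curve thm 1}) and of (\ref{one curve thm 3}) from \ref{thm 1d setup updown} are fine, but as it stands the proposal does not prove (\ref{one curve thm 1}).
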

\begin{proof}
Since there is only one curve above each point $z\in\nonIso$, each $\CAz$ is local and so by \ref{loc D and hyper} every $\widehat{\cO}_{Y,z}$ is a hypersurface.  \\
(1) The assertion can be checked at the completion, thus we can assume that $Y=\Spec\mathfrak{R}$, with maximal ideal $\m$ lying in $Z$.  We just need to check that $\CA\in\CM_{\cS}\mathfrak{R}$.   But since $X$ is smooth, $\pd_\AB\CA<\infty$, and also by assumption $\dim \Spec \mathfrak{R}=3$, hence it follows by e.g.\ \cite[6.19(4)]{IW4}  that $\id_{\CA}\CA\leq 1$.  This being the case, since further $\CA$ is local, by Ramras \cite[2.15]{Ramras} there is a chain of equalities
\[
\depth_{\mathfrak{R}} \CA = \dim_{\mathfrak{R}} \CA = \id_{\CA}\CA
\]
and hence $\CA\in\CM_{\cS}\mathfrak{R}$.\\
(2) Since the support of $\sDefAlg$ equals $Z$ by \ref{global cont thm}, this is an immediate corollary of \eqref{one curve thm 1}.\\
(3) Since $X$ is smooth, automatically $\twistOb\in\Perf(X)$.  Hence the result follows by combining~\eqref{one curve thm 2} and~\ref{thm 1d setup updown}(\ref{thm 1d setup updown 2}).
\end{proof}

\end{document}